\def\serieslogo@{} 
\def\@setcopyright{} 
\title[The telescope conjecture for hereditary rings]{The telescope
conjecture for hereditary rings via Ext-orthogonal pairs}
\dedicatory{Dedicated to Helmut Lenzing on the occasion of his 70th birthday.}
\thanks{This research was partially supported by  DFG Schwerpunkt
  SPP 1388.\\ The second author was supported by the Research Council
  of Norway through the Storforsk-project ``Homological and geometric
  methods in algebra'' and by the grant MSM~0021620839.}
\author{Henning Krause} 
\address{Henning Krause\\ Fakult\"at f\"ur
  Mathematik\\ Universit\"at Bielefeld\\ 33501 Bielefeld\\ Germany.}
\email{hkrause@math.uni-bielefeld.de}
\author{Jan \v{S}\v{t}ov\'i\v{c}ek}
\address{Jan \v{S}\v{t}ov\'i\v{c}ek\\
Charles University in Prague\\ Faculty of Mathematics and Physics\\ Department of Algebra\\
Sokolovska 83\\ 186~75 Praha 8\\ Czech Republic.}
\email{stovicek@karlin.mff.cuni.cz}
\newtheorem{lem}{Lemma}[section]
\newtheorem{prop}[lem]{Proposition}
\newtheorem{cor}[lem]{Corollary}
\newtheorem{thm}[lem]{Theorem}
\newtheorem*{ThmA}{Theorem~A}
\newtheorem*{ThmB}{Theorem~B}
\theoremstyle{remark}
\newtheorem{rem}[lem]{Remark}
\theoremstyle{definition}
\newtheorem{exm}[lem]{Example}
\newtheorem{defn}[lem]{Definition}
\numberwithin{equation}{section}
\newcommand{\smatrix}[1]{\left[\begin{smallmatrix}#1\end{smallmatrix}\right]}
\renewcommand{\mod}{\operatorname{mod}\nolimits}
\newcommand{\Add}{\operatorname{Add}\nolimits}
\newcommand{\Id}{\operatorname{Id}\nolimits}
\newcommand{\Mod}{\operatorname{Mod}\nolimits}
\newcommand{\End}{\operatorname{End}\nolimits}
\newcommand{\Hom}{\operatorname{Hom}\nolimits}
\newcommand{\RHom}{\operatorname{\mathbf{R}Hom}\nolimits}
\renewcommand{\Im}{\operatorname{Im}\nolimits}
\newcommand{\Ker}{\operatorname{Ker}\nolimits}
\newcommand{\Coker}{\operatorname{Coker}\nolimits}
\newcommand{\gldim}{\operatorname{gldim}\nolimits}
\newcommand{\Ext}{\operatorname{Ext}\nolimits}
\newcommand{\Tor}{\operatorname{Tor}\nolimits}
\newcommand{\Fac}{\operatorname{Fac}\nolimits}
\newcommand{\Sub}{\operatorname{Sub}\nolimits}
\newcommand{\op}{\mathrm{op}}
\newcommand{\fp}{\mathrm{fp}}
\newcommand{\inj}{\mathrm{inj}}
\newcommand{\xto}{\xrightarrow}
\newcommand{\lotimes}{\otimes^{\mathbf L}}
\def\li{\varinjlim}
\def\a{\alpha}
\def\b{\beta}
\def\e{\varepsilon}
\def\g{\gamma}
\def\p{\phi}
\def\s{\sigma}
\def\Ga{\varGamma}
\def\Si{\Sigma}
\def\A{{\mathcal A}}
\def\B{{\mathcal B}}
\def\C{{\mathcal C}}
\def\D{{\mathcal D}}
\def\X{{\mathcal X}}
\def\Y{{\mathcal Y}}
\def\Z{{\mathcal Z}}
\def\T{{\mathcal T}}
\def\U{{\mathcal U}}
\def\V{{\mathcal V}}
\def\W{{\mathcal W}}
\def\bfD{{\mathbf D}}
\def\bfL{{\mathbf L}}
\def\bbN{{\mathbb N}}
\def\bbZ{{\mathbb Z}}
\begin{document}

\begin{abstract}
For the module category of a hereditary ring, the Ext-ortho\-gonal
pairs of subcategories are studied.  For each Ext-orthogonal pair that
is generated by a single module, a 5-term exact sequence is
constructed. The pairs of finite type are characterized and two
consequences for the class of hereditary rings are established:
homological epimorphisms and universal localizations coincide, and the
telescope conjecture for the derived category holds true.
However, we present examples showing that neither of these two
statements is true in general for rings of global dimension 2.
\end{abstract}

\maketitle

\section{Introduction}

In this paper, we prove the telescope conjecture for the derived
category of any hereditary ring. To achieve this, we study
Ext-orthogonal pairs of subcategories for hereditary  module categories.

\smallskip
The telescope conjecture for the derived category of a module category
is also called smashing conjecture. It is the analogue of the
telescope conjecture from stable homotopy theory which is due to
Bousfield and Ravenel \cite{B,Ra}. In each case one deals with a
compactly generated triangulated category. The conjecture then claims
that a localizing subcategory is generated by compact objects provided
it is smashing, that is, the localizing subcategory arises as the
kernel of a localization functor that preserves arbitrary coproducts
\cite{Ne1992}.  In this general form, the telescope conjecture seems
to be wide open. For the stable homotopy category, we refer to the
work of Mahowald, Ravenel, and Shick \cite{MRS} for more details. In
our case, the conjecture takes the following form and is proved in
\S\ref{se:tel}:

\begin{ThmA}
Let $A$ be a hereditary  ring. For a localizing subcategory
$\C$ of $\bfD(\Mod A)$ the following conditions are equivalent:
\begin{enumerate}
\item There exists a localization functor $L\colon\bfD(\Mod
A)\to\bfD(\Mod A)$ that preserves coproducts and such that $\C=\Ker L$.
\item The localizing subcategory $\C$ is generated by perfect
complexes.
\end{enumerate}
\end{ThmA}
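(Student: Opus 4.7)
The direction (2) $\Rightarrow$ (1) follows from general theory due to Neeman: any localizing subcategory of a compactly generated triangulated category that is generated by compact objects is automatically smashing, with its Bousfield localization preserving coproducts. So the substance of the theorem lies in (1) $\Rightarrow$ (2).

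My plan for (1) $\Rightarrow$ (2) is to reduce the question from $\bfD(\Mod A)$ to $\Mod A$, exploiting the hereditary hypothesis, and then apply the structure theory of Ext-orthogonal pairs developed earlier in the paper. Since $\gldim A\leq 1$, every object $X\in\bfD(\Mod A)$ splits canonically as $X\simeq\bigoplus_{n\in\bbZ} H^n(X)[-n]$, so a localizing subcategory $\C$ is controlled entirely by its cohomology. Setting
$$\X=\{M\in\Mod A : M\in\C\},\qquad \Y=\{N\in\Mod A : N\in\C^{\perp}\},$$
one verifies, using the cohomological splitting, that $(\X,\Y)$ is an Ext-orthogonal pair in $\Mod A$. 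The hypothesis that $L$ preserves coproducts should translate into $\Y$ being closed under coproducts in $\Mod A$, which via the characterization of Ext-orthogonal pairs of finite type proved earlier forces $(\X,\Y)$ to be of finite type. Consequently $\X$ is generated by a set of finitely presented $A$-modules. Because $A$ is hereditary, every finitely presented $M$ fits into a short exact sequence $0\to K\to P\to M\to 0$ with $K$ and $P$ finitely generated projective (the kernel is projective by heredity and finitely generated by Schanuel's lemma), so $M$ corresponds to a perfect complex in $\bfD(\Mod A)$. Hence $\C$ is generated by perfect complexes.

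The main obstacle I anticipate is the dictionary between smashing subcategories of $\bfD(\Mod A)$ and Ext-orthogonal pairs of finite type in $\Mod A$. Showing that $(\X,\Y)$ is well defined and Ext-orthogonal is a routine consequence of heredity; the subtle step is passing from the mere coproduct-preservation of $L$ to the finite type condition on $(\X,\Y)$. This is presumably where the 5-term exact sequence for pairs generated by a single module, announced in the abstract, plays its key role, by providing a concrete mechanism to replace generators of $\X$ by finitely presented---and hence perfect---ones.
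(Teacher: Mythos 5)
Your proposal is correct and follows essentially the same route as the paper's proof of Theorem~\ref{th:TC}. The key steps you outline — using heredity to split complexes as $\coprod_n H^n(X)[-n]$ (Lemma~\ref{le:formal}), thereby transferring the Ext-orthogonal pair $(\Ker L,\Im L)$ in $\bfD(\Mod A)$ to an Ext-orthogonal pair $(\X,\Y)=(H^0\Ker L, H^0\Im L)$ in $\Mod A$ (Proposition~\ref{pr:thick}), observing that coproduct-preservation of $L$ makes $\Y$ coproduct-closed, then invoking the finite-type characterization (Theorem~\ref{th:ext}) to replace the generators of $\X$ by finitely presented modules, which over a hereditary ring are perfect — are exactly the paper's steps for (1)~$\Rightarrow$~(2). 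You have also correctly located the real content in Theorem~\ref{th:ext}, whose proof rests on the 5-term approximation sequence and the auxiliary Lemmas~\ref{le:ext1perp}--\ref{le:subfp}. The only cosmetic difference is that the paper routes the converse through a third equivalent condition (a product-closed localizing coreflecting subcategory $\D$ with $\C={}^\perp\D$), whereas you appeal directly to Neeman's general principle that compactly generated localizing subcategories are smashing; both are valid, and yours is the more streamlined way to dispose of the easy direction.
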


For the derived category of a module category, only two results seem
to be known so far. Neeman proved the conjecture for the derived
category of a commutative noetherian ring \cite{N}, essentially by
classifying all localizing subcategories; see \cite{HPS} for a
treatment of this approach in the context of axiomatic stable homotopy
theory. On the other hand, Keller gave an explicit example of a
commutative ring where the conjecture does not hold \cite{Ke}.  In
fact, an analysis of Keller's argument \cite{Kr2005} shows that there
are such examples having global dimension $2$; see
Example~\ref{ex:tc_fail}.

The approach for hereditary rings presented here is completely different
from Neeman's.  In particular, we are working in a non-commutative
setting and without using any noetherianess assumption. The main idea here is to
exploit the very close connection between the module category and the derived
category in the hereditary case. Unfortunately, this approach cannot be
extended directly even to global dimension $2$, as mentioned above.

\smallskip
At a first glance, the telescope conjecture seems to be a rather
abstract statement about unbounded derived categories. However in the
context of a fixed hereditary ring, it turns out that smashing
localizing subcategories are in bijective correspondence to various
natural structures; see \S\ref{se:bij}:

\begin{ThmB}\label{th:ThB}
For a hereditary ring $A$ there are bijections between the following
sets:
\begin{enumerate}
\item Extension closed abelian subcategories of $\Mod A$ that are
closed under products and coproducts.
\item Extension closed abelian subcategories  of $\mod A$.
\item Homological epimorphisms $A\to B$ (up to isomorphism).
\item Universal localizations $A\to B$ (up to isomorphism).
\item Localizing subcategories of $\bfD(\Mod A)$ that are closed under products.
\item Localization functors $\bfD(\Mod A)\to\bfD(\Mod A)$ preserving
coproducts (up to natural isomorphism).
\item Thick subcategories of $\bfD^b(\mod A)$.
\end{enumerate}
\end{ThmB}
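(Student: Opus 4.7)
The plan is to establish the sevenfold bijection by routing each set through a single central object --- a suitable Ext-orthogonal pair $(\X,\Y)$ of subcategories of $\Mod A$ --- and invoking the machinery developed in the body of the paper. Two features of the hereditary setting are decisive: every complex in $\bfD(\Mod A)$ is quasi-isomorphic to the direct sum $\bigoplus_i H^i(X)[-i]$ of its shifted cohomology, and Ext-orthogonal pairs admit a clean ``finite type'' characterization in terms of finitely presented modules.

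I begin with the formal equivalences. The bijection (5)~$\leftrightarrow$~(6) is the standard correspondence between smashing subcategories and coproduct-preserving localization functors and requires no hereditary input. For (2)~$\leftrightarrow$~(7), the hereditary decomposition shows that a thick subcategory $\T \subseteq \bfD^b(\mod A)$ is recovered from $\T \cap \mod A$, which is automatically extension closed and abelian; the thick closure functor in the other direction is inverse. The equivalence (3)~$\leftrightarrow$~(4) is the assertion, announced in the abstract and established separately in the paper, that for a hereditary ring every universal localization is a homological epimorphism and vice versa.

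Next I would address (1)~$\leftrightarrow$~(2), the module-theoretic heart of the finite type statement. One direction intersects with $\mod A$, the other takes closure under direct limits. The key point is that for a hereditary ring, every extension closed abelian subcategory of $\Mod A$ closed under products and coproducts is \emph{determined by its finitely presented part}; this is exactly the finite type classification of Ext-orthogonal pairs proved earlier in the paper and yields the required mutually inverse operations.

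Finally, to link the module side (1)--(2) to the derived side (3)--(7), I would associate to an extension closed abelian subcategory $\X \subseteq \Mod A$ closed under products and coproducts the full triangulated subcategory $\C \subseteq \bfD(\Mod A)$ of complexes whose cohomology lies in $\X$. The hereditary decomposition of complexes shows that $\C$ is closed under coproducts, products, and extensions, hence smashing. Conversely, given a smashing subcategory $\C$, one reads off the subcategory of $\Mod A$ consisting of those modules $M$ with $M \in \C$ in degree zero. The main obstacle, and the place where the hereditary hypothesis is genuinely indispensable, is to prove that this reverse construction lands in (1) --- equivalently, that the Ext-orthogonal pair associated to any smashing subcategory is of finite type. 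The 5-term exact sequence constructed earlier in the paper is the essential tool for forcing this finite type property, and its failure in higher global dimension is precisely what is witnessed by Example~\ref{ex:tc_fail}, which at the same time shows why the theorem cannot be pushed beyond the hereditary case.
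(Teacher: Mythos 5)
Your overall strategy of routing the sevenfold bijection through Ext-orthogonal pairs matches the paper's own proof, which also centers everything on a single intermediate object and invokes the same clusters of results: the Brüning correspondence for thick subcategories, the finite-type theorem, the universal-localization-equals-homological-epimorphism theorem, and the $H^0/\bfD_\X$ correspondence. Most of your references to where the hereditary hypothesis is used are accurate.

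The genuine gap is the opening claim that the bijection (5)~$\leftrightarrow$~(6) ``is the standard correspondence between smashing subcategories and coproduct-preserving localization functors and requires no hereditary input.'' This conflates item~(5) --- localizing subcategories of $\bfD(\Mod A)$ closed under products --- with smashing subcategories, and the two are not interchangeable until most of the theorem is already in place. One direction is indeed formal: given a coproduct-preserving localization functor $L$, the image $\Im L$ is localizing and closed under products. But the converse direction is precisely where hereditariness is indispensable. Starting from a localizing subcategory $\D$ closed under products, you need to \emph{produce} a localization functor with $\Im L=\D$; existence of such a Bousfield localization is not automatic for an arbitrary localizing subcategory of a compactly generated triangulated category (this is tied up with Brown representability and well-generatedness, not with the closure properties you list). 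The paper settles this direction in Theorem~\ref{th:TC}, whose implication (3)~$\Rightarrow$~(1) passes through Corollary~\ref{co:perp}: one descends to $\Y=H^0\D$, invokes Proposition~\ref{pr:hom-epi2} to manufacture a homological epimorphism $A\to B$, and then sets $L=-\otimes_A^{\bfL}B$. Every step of this chain uses that $A$ is hereditary. So the hereditary hypothesis is not only needed for the module-side finite-type analysis, as your final paragraph says, but already for the existence half of (5)~$\to$~(6). A secondary imprecision, in the same vein, is that your last paragraph treats ``the smashing subcategory $\C$'' as if $H^0\C$ were directly the class in item (1); but if $\C=\Ker L$ is what you mean by smashing, then $H^0\C$ is the $\X$-half of the pair, which is closed under coproducts but not products, so you must pass to $\Im L$ (or equivalently to the Ext-orthogonal complement) before reading off an object of class~(1).
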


This reveals that the telescope conjecture and its proof are related to interesting
recent work by some other authors.  In \cite{Sch2007}, Schofield
describes for any hereditary ring its universal localizations in terms
of appropriate subcategories of finitely presented modules. This is a
consequence of the present work since we show that homological
epimorphisms and universal localizations coincide for any hereditary
ring; see \S\ref{se:uni-loc}. However, as we mention at the end of \S\ref{se:uni-loc},
the identification between homological epimorphisms and universal localizations
also fails already for rings of global dimension $2$.

In \cite{NS}, Nicol\'as and Saor\'in establish for a
differential graded algebra a correspondence between recollements for
its derived category and differential graded homological
epimorphisms. This correspondence specializes for a hereditary ring to
the above mentioned bijection between smashing localizing
subcategories and homological epimorphisms.

\smallskip
The link between the structures mentioned in Theorem~B is 
provided by so-called Ext-orthogonal pairs. This concept seems to be new,
but it is based on the notion of a perpendicular category which is one
of the fundamental tools for studying hereditary categories arising in
representation theory \cite{Sch1991,GL}.

Given any abelian category $\A$, we call a pair $(\X,\Y)$ of full
subcategories \emph{Ext-orthogonal} if $\X$ and $\Y$ are orthogonal to
each other with respect to the bifunctor $\coprod_{n\geq
0}\Ext^n_\A(-,-)$. This concept is the analogue of a \emph{torsion
pair} and a \emph{cotorsion pair} where one considers instead the
bifunctors $\Hom_\A(-,-)$ and $\coprod_{n> 0}\Ext^n_\A(-,-)$,
respectively \cite{D,Sa}.

Torsion and cotorsion pairs are most interesting when they are
\emph{complete}.  For a torsion pair this means that each object $M$
in $\A$ admits a short exact sequence $0\to X_M\to M\to Y^M\to 0$ with
$X_M\in\X$ and $Y^M\in\Y$. In the second case this means that each
object $M$ admits short exact sequences $0\to Y_M\to X_M\to M\to 0$
and $0\to M\to Y^M\to X^M\to 0$ with $X_M,X^M\in\X$ and
$Y_M,Y^M\in\Y$.

It turns out that there is also a reasonable notion of completeness
for Ext-orthogonal pairs. In that case each object $M$ in $\A$ admits
a 5-term exact sequence $$0\to Y_M\to X_M\to M\to Y^M \to X^M\to 0$$
with $X_M,X^M\in\X$ and $Y_M,Y^M\in\Y$. This notion of a complete
Ext-orthogonal pair is meaningful also for non-hereditary module categories,
see Example~\ref{ex:domains}.

In this work, however, we study Ext-orthogonal pairs mainly for the module category of
a hereditary ring. As already mentioned, this assumption implies a close
connection between the module category and its derived category,
which we exploit in both directions. We use Bousfield
localization functors which exist for the derived category to
establish the completeness of certain Ext-orthogonal pairs for the
module category; see \S\ref{se:ext}. On the other hand, we are able to
prove the telescope conjecture for the derived category by showing
first a similar result for Ext-orthogonal pairs; see \S\ref{se:fin}
and \S\ref{se:tel}.

Specific examples of Ext-orthogonal pairs arise in the representation
theory of finite dimensional algebras via perpendicular categories;
see \S\ref{se:exm}. Note that a perpendicular category is always a part
of an Ext-orthogonal pair.  Schofield introduced perpendicular
categories for representations of quivers \cite{Sch1991} and this fits
into our set-up because the path algebra of any quiver is
hereditary. In fact, the concept of a perpendicular category is
fundamental for studying hereditary categories arising in
representation theory \cite{GL}. It is therefore somewhat surprising
that the 5-term exact sequence for a complete Ext-orthogonal pair
seems to appear for the first time in this work.

\subsection*{Acknowledgements}
The authors would like to thank Lidia Angeleri H\"ugel and Manolo
Saor\'in for helpful discussions concerning this work.

\section{Ext-orthogonal pairs}
\label{se:ext}

Let $\A$ be an abelian category.  Given a pair of objects $X,Y\in\A$,
set $$\Ext_\A^*(X,Y)=\coprod_{n\in\bbZ}\Ext_\A^n(X,Y).$$ For a
subcategory $\C$ of $\A$ we consider its full Ext-orthogonal
subcategories
\begin{align*}
{^\perp}\C&=\{X\in\A\mid \Ext^*_\A(X,C)=0\text{ for all }C\in\C\},\\
\C^\perp&=\{Y\in\A\mid \Ext^*_\A(C,Y)=0\text{ for all }C\in\C\}.
\end{align*}
If $\C = \{X\}$ is a singleton, we write ${^\perp}X$ instead of
${^\perp}\{X\}$, and similarly with $X^\perp$.

\begin{defn}
An \emph{Ext-orthogonal pair} for $\A$ is a pair $(\X,\Y)$ of full
subcategories such that $\X^\perp=\Y$ and $\X={^\perp}\Y$.  An
Ext-orthogonal pair $(\X,\Y)$ is called \emph{complete} if there
exists for each object $M\in\A$ an exact sequence
$$\e_M\colon\:\:0\to Y_M\to X_M\to M\to Y^M\to X^M\to 0$$ with
$X_M,X^M\in\X$ and $Y_M,Y^M\in\Y$.  The pair $(\X,\Y)$ is
\emph{generated} by a subcategory $\C$ of $\A$ if $\Y=\C^\perp$.
\end{defn}

The definition can be extended to the derived category $\bfD(\A)$ of $\A$ if we put
for each pair of complexes $X,Y\in\bfD(\A)$ and $n\in\bbZ$
$$\Ext^n_\A(X,Y)=\Hom_{\bfD(\A)}(X,Y[n]).$$
Thus an \emph{Ext-orthogonal pair} for $\bfD(\A)$ is a pair $(\X,\Y)$ of
full subcategories of $\bfD(\A)$ such that $\X^\perp=\Y$ and $\X={^\perp}\Y$.

Recall that an \emph{abelian subcategory} of $\A$ is a full
subcategory $\C$ such that the category $\C$ is abelian and the
inclusion functor $\C\to\A$ is exact. Moreover, we will always assume that
an abelian subcategory $\C$ is closed under taking isomorphic objects in
the original category $\A$. Suppose $\A$ is
\emph{hereditary}, that is, $\Ext_\A^n(-,-)$ vanishes for all $n>1$. Then a
simple calculation shows that for any subcategory $\C$ of $\A$, the
subcategories $\C^\perp$ and $^{\perp}\C$ are extension closed abelian
subcategories; see \cite[Proposition~1.1]{GL}.

The following result establishes the completeness for certain
Ext-orthogonal pairs. Recall that an abelian category is a
\emph{Grothendieck category} if it has a set of generators and admits
colimits that are exact when taken over filtered categories.

\begin{thm}\label{th:perpX}
Let $\A$ be a hereditary Grothendieck category and $X$ an
object in $\A$.  Set $\Y=X^\perp$ and let $\X$ denote the smallest
extension closed abelian subcategory of $\A$ that is closed under
taking coproducts and contains $X$. Then $(\X,\Y)$ is a complete
Ext-orthogonal pair for $\A$. Thus there exists for each object
$M\in\A$ an exact sequence
$$0\to Y_M\to X_M\to M\to Y^M\to X^M\to 0$$ with
$X_M,X^M\in\X$ and $Y_M,Y^M\in\Y$. 
This sequence is natural and induces bijections
$\Hom_\A(X,X_M)\to\Hom_\A(X,M)$ and $\Hom_\A(Y^M,Y)\to\Hom_\A(M,Y)$
for all $X\in\X$ and $Y\in\Y$.
\end{thm}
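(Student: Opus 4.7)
The plan is to lift the problem to the derived category $\bfD(\A)$, extract a 5-term sequence from a Bousfield localization triangle, and translate back to $\A$ using the hereditary splitting $Z\simeq\coprod_n H^n(Z)[-n]$ of objects in $\bfD(\A)$.

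Let $\L=\mathrm{Loc}(X)\subseteq\bfD(\A)$ be the localizing subcategory generated by $X$. Since $\A$ is Grothendieck, $\bfD(\A)$ is well generated (Neeman) and so is $\L$, so the inclusion admits a right adjoint and for every $M\in\A$ we obtain a functorial Bousfield triangle
$$X'\to M\to Y'\to X'[1],\qquad X'\in\L,\ Y'\in\L^\perp.$$
The next step is to match the two classes with $\X$ and $\Y$. The subcategory $\L\cap\A\subseteq\A$ is extension-closed, closed under coproducts and abelian (the cone of a morphism between objects of $\L$ splits in the hereditary $\bfD(\A)$ into the shifted kernel and cokernel, both summands of an object of $\L$); by minimality of $\X$ it contains $\X$, and the reverse inclusion will be a consequence of the 5-term sequence itself (any $Z\in\L\cap\A$ will turn out to be a summand of $X_Z\in\X$). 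Dually, $Y'\in\L^\perp$ translates through the splitting into $\Ext_\A^*(X,H^n(Y'))=0$, i.e.\ $H^n(Y')\in\Y=X^\perp$ for every $n$.

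The 5-term sequence then drops out of the long exact cohomology sequence of the Bousfield triangle, where $H^n(M)$ is concentrated in degree~$0$. For $n\neq 0,1$ the connecting maps yield isomorphisms $H^{n-1}(Y')\xrightarrow{\sim}H^n(X')$; the common object lies in both $\X$ and $\Y$, and any $Z\in\X\cap\Y$ satisfies $\Hom_\A(Z,Z)=0$ and hence is $0$, so these cohomologies vanish. Setting $Y_M=H^{-1}(Y')$, $X_M=H^0(X')$, $Y^M=H^0(Y')$, $X^M=H^1(X')$ collapses the long exact sequence to
$$0\to Y_M\to X_M\to M\to Y^M\to X^M\to 0,$$
natural in $M$ by functoriality of Bousfield localization and of cohomology.

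The Hom bijections follow from the triangle by applying $\Hom_{\bfD(\A)}(X,-)$: the orthogonality $Y'\in\L^\perp$ kills $\Hom_{\bfD(\A)}(X,Y'[n])$ for every $n$, whence $\Hom_{\bfD(\A)}(X,X')\xrightarrow{\sim}\Hom_\A(X,M)$, and the splitting $X'\simeq X_M\oplus X^M[-1]$ together with $\Ext_\A^{-1}=0$ identifies the left-hand side with $\Hom_\A(X,X_M)$; the argument for $\Hom_\A(Y^M,Y)\xrightarrow{\sim}\Hom_\A(M,Y)$ is symmetric. I expect the main obstacle to be the identification $\L\cap\A=\X$, which requires threading the abelian structure of $\X\subseteq\A$ through the triangulated data of $\L\subseteq\bfD(\A)$; the existence of the Bousfield localization itself rests on the non-trivial (but classical) well-generatedness of $\bfD(\A)$, and once the triangle is in hand the rest is a routine cohomology chase forced by $\X\cap\Y=0$.
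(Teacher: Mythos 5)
Your approach mirrors the paper's: lift to $\bfD(\A)$, invoke Bousfield localization for the localizing subcategory $\L$ generated by $X$, and read off the $5$-term sequence from the long exact cohomology sequence of the Bousfield triangle, using the hereditary splitting of complexes into shifted cohomologies. The computation that $H^n(Y')\in\Y$ for all $n$, the collapse of the long exact sequence to five terms via the vanishing of the common outer cohomologies, and the Hom-bijections via the decompositions $X'\cong X_M\oplus X^M[-1]$ and $Y'\cong Y_M[1]\oplus Y^M$ are all correct in substance, and this is exactly what the paper does inside Proposition~\ref{pr:local} and Lemma~\ref{le:exact}.

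There is, however, a genuine gap in the identification $\L\cap\A=\X$, and it is not a formality. You correctly obtain $\X\subseteq\L\cap\A$ from the closure properties of $\L\cap\A$, but your proposal for the reverse inclusion --- ``any $Z\in\L\cap\A$ will turn out to be a summand of $X_Z\in\X$'' --- is circular: the construction only ever places $X_Z$ in $\L\cap\A$, and knowing $X_Z\in\X$ is precisely what the inclusion $\L\cap\A\subseteq\X$ would buy you. Without that inclusion, the $5$-term sequence has $X_M,X^M\in\L\cap\A$, not in $\X$, and the pair you produce is $(\L\cap\A,\Y)$ rather than $(\X,\Y)$. (A related minor slip: you argue that the outer cohomologies vanish because they lie in $\X\cap\Y$, but at this stage you only know they lie in $(\L\cap\A)\cap\Y$; this is fixable, since such a $Z$ lies in both $\L$ and $\L^\perp$ and hence has $\mathrm{id}_Z=0$.) The paper closes the real gap by citing Br\"uning's theorem (Proposition~\ref{pr:thick-corr}), which yields that $\bfD_\X(\A)$ is \emph{the} smallest localizing subcategory of $\bfD(\A)$ containing $X$, so that $\L=\bfD_\X(\A)$ and therefore $\L\cap\A=\X$. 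If you prefer a self-contained argument, show directly that $\bfD_\X(\A)$ is localizing: given a triangle $A\to B\to C\to A[1]$ with $A,B\in\bfD_\X(\A)$, the long exact cohomology sequence exhibits each $H^nC$ as an extension of $\Coker(H^nA\to H^nB)$ by $\Ker(H^{n+1}A\to H^{n+1}B)$, both of which lie in $\X$ because $\X$ is an abelian subcategory; extension-closure of $\X$ then gives $H^nC\in\X$. Minimality of $\L$ now forces $\L\subseteq\bfD_\X(\A)$, which is the missing inclusion.
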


The proof uses derived categories and Bousfield localization
functors. Thus we need to collect some basic facts about hereditary
abelian categories and their derived categories.

\subsection*{The derived category of a hereditary abelian category}

Let $\A$ be a hereditary abelian category and let $\bfD(\A)$ denote
its derived category.  We assume that $\A$ admits coproducts and that
the coproduct of any set of exact sequences is again exact. Thus the
category $\bfD(\A)$ admits coproducts, and for each integer $n$ these
coproducts are preserved by the functor $H^n\colon\bfD(\A)\to\A$ which
takes a complex to its cohomology in degree $n$.

It is well-known that each complex is
quasi-isomorphic to its cohomology. That is:

\begin{lem}\label{le:formal}
Given a complex $X$ in $\bfD(\A)$, there are (non-canonical) isomorphisms
\begin{equation*}
\coprod_{n\in\bbZ}(H^nX)[-n]\cong X\cong
\prod_{n\in\bbZ}(H^nX)[-n].
\end{equation*}
\end{lem}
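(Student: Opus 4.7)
The plan is to exploit the fact that the hereditariness of $\A$ forces all obstructions to formality to vanish, since these obstructions naturally live in $\Ext^n$ for $n\geq 2$. More precisely, translated to $\bfD(\A)$, we have $\Hom_{\bfD(\A)}(A,B[n])=\Ext^n_\A(A,B)=0$ for all $A,B\in\A$ and $n\geq 2$. This is the sole input used throughout.

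First I would prove the coproduct statement for bounded complexes by induction on the width of the cohomology. The base case (complex with cohomology in a single degree) is trivial. For the inductive step, suppose the cohomology of $X$ is supported in degrees $[a,N]$. Consider the canonical truncation triangle
$$\tau^{<N}X\lto X\lto H^N(X)[-N]\xto{\delta}(\tau^{<N}X)[1].$$
By the inductive hypothesis, $\tau^{<N}X\cong\coprod_{n=a}^{N-1}H^n(X)[-n]$, and hence the connecting morphism $\delta$ lives in
$$\Hom_{\bfD(\A)}\bigl(H^N(X)[-N],(\tau^{<N}X)[1]\bigr)\cong\prod_{n=a}^{N-1}\Ext^{N-n+1}_\A\bigl(H^N(X),H^n(X)\bigr),$$
which vanishes because $N-n+1\geq 2$. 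Hence the triangle splits and we obtain $X\cong\coprod_{n=a}^N H^n(X)[-n]$.

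Next I would pass to the unbounded case. The plan is to construct for each $n\in\bbZ$ a morphism $\psi_n\colon H^n(X)[-n]\to X$ with $H^n(\psi_n)=\Id$, and then assemble them into a morphism $\coprod_n H^n(X)[-n]\to X$ that induces an isomorphism on every cohomology. For $\psi_n$, I would look at the truncation triangle
$$\tau^{<n}X\lto\tau^{\leq n}X\lto H^n(X)[-n]\xto{\delta_n}(\tau^{<n}X)[1]$$
and note that $\delta_n=0$: indeed, by the bounded case applied to $\tau^{<n}X$, the same computation as above (with $N=n$) shows that the relevant Hom-group decomposes into a product of Ext-groups of degree at least $2$. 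Thus $H^n(X)[-n]$ splits off from $\tau^{\leq n}X$, and composing with $\tau^{\leq n}X\to X$ produces $\psi_n$. The induced morphism $\coprod_n H^n(X)[-n]\to X$ is an isomorphism because it is one on each $H^m$, using that $H^m$ preserves coproducts (by the assumption that coproducts in $\A$ are exact), and because an isomorphism in $\bfD(\A)$ can be detected on cohomology.

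For the product statement $X\cong\prod_n H^n(X)[-n]$, I would run the dual argument, using injective truncations $\tau^{\geq n}$ to construct sections $X\to H^n(X)[-n]$ out of the vanishing of the same Ext-obstructions, and then assemble them into a morphism $X\to\prod_n H^n(X)[-n]$ which is again checked on cohomology. The main obstacle is really just the unbounded case; the crucial insight is that the bounded splittings are not just non-canonical abstract isomorphisms but arise from vanishing connecting maps, so one can extract explicit maps $\psi_n$ and dualize. Both the coproduct and the product statement will come down to the single vanishing $\Ext^{\geq 2}_\A=0$.
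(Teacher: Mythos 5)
The paper itself offers no proof of this lemma — it simply cites \cite[\S1.6]{Kr} — so let me assess your argument on its own terms.

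Your bounded case is correct: since the coproduct $\coprod_{n=a}^{N-1}H^n(X)[-n]$ is finite, a morphism into its shift decomposes as a finite product of Ext-groups in degrees $\geq 2$, which vanish. The gap occurs in the passage to unbounded complexes. When $X$ is not bounded below, $\tau^{<n}X$ is bounded above but not bounded, so ``the bounded case applied to $\tau^{<n}X$'' is not available. Even if one granted the coproduct decomposition $\tau^{<n}X\cong\coprod_{m<n}H^m(X)[-m]$ with infinitely many nonzero terms, the group $\Hom_{\bfD(\A)}\bigl(H^n(X)[-n],(\tau^{<n}X)[1]\bigr)$ would then be a Hom \emph{into} an infinite coproduct, which does not break up as a product of Ext-groups; the argument you used in the bounded case relied precisely on the finite coproduct being a product. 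So the vanishing of $\delta_n$ is not established by what you have written, and this is the whole content of the unbounded statement.

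The argument can be repaired, but it requires one more intermediate step and a slightly different order of quantifiers. First dispose of the bounded-\emph{below} case: there $\tau^{<n}X$ is genuinely bounded, your argument applies verbatim, and one obtains the coproduct decomposition. Dually, for the bounded-\emph{above} case split the triangles $H^n(X)[-n]\to\tau^{\geq n}X\to\tau^{>n}X$, where now $\tau^{>n}X$ is bounded, producing retractions $X\to H^n(X)[-n]$ and hence the \emph{product} decomposition $X\cong\prod_n H^n(X)[-n]$. Only then attack the general case: $\tau^{<n}X$ is bounded above, so by the previous step $\tau^{<n}X\cong\prod_{m<n}H^m(X)[-m]$, and since a Hom \emph{into} a product does decompose as a product of Homs, you get
\begin{equation*}
\Hom_{\bfD(\A)}\bigl(H^n(X)[-n],(\tau^{<n}X)[1]\bigr)\cong\prod_{m<n}\Ext^{n-m+1}_\A\bigl(H^n(X),H^m(X)\bigr)=0,
\end{equation*}
and your construction of $\psi_n$ goes through. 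The dual construction handles the product side of the general statement. In short, the idea is right, but the bounded-above and bounded-below half-cases are not optional intermediates here; they are exactly what is needed to legitimize the vanishing of $\delta_n$ in the unbounded case, and the direction in which you truncate forces you to use the \emph{product} (not coproduct) decomposition of $\tau^{<n}X$.

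One further small caveat: for the product statement you implicitly need $\bfD(\A)$ to admit the relevant products and $H^m$ to commute with them; the paper's standing hypotheses on $\A$ only address coproducts, so if you carry out the dual argument you should say a word about why the products you form behave as expected in the setting at hand.
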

\begin{proof}
See for instance \cite[\S1.6]{Kr}.
\end{proof}

A full subcategory $\C$ of $\bfD(\A)$ is called \emph{thick} if it
is a triangulated subcategory which is, in addition, closed under taking direct
summands. A thick subcategory is \emph{localizing} if it is closed
under taking coproducts. Note that for each full subcategory $\C$ the
subcategories $\C^\perp$ and $^\perp\C$ are thick.

To a full subcategory $\C$ of $\bfD(\A)$ we assign the full subcategory
$$H^0\C=\{M\in\A\mid M= H^0X\text{ for some }X\in\C\},$$ 
and given a full subcategory $\X$ of $\A$, we define the full subcategory
$$\bfD_\X(\A)=\{X\in\bfD(\A)\mid H^nX\in\X\text{ for all
}n\in\bbZ\}.$$ Both assignments induce mutually inverse bijections
between appropriate subcategories. 
This is a useful fact which we recall from \cite[Theorem~6.1]{Br}.

\begin{prop}\label{pr:thick-corr}
The functor $H^0\colon\bfD(\A)\to\A$ induces a bijection between the
localizing subcategories of $\bfD(\A)$ and the extension closed
abelian subcategories of $\A$ that are closed under coproducts.  The
inverse map sends a full subcategory $\X$ of $\A$ to $\bfD_\X(\A)$.
\qed
\end{prop}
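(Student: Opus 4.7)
The plan is to exploit the formality provided by Lemma~\ref{le:formal}: every complex $X$ in $\bfD(\A)$ is (non-canonically) isomorphic to $\coprod_{n\in\bbZ}(H^nX)[-n]$. This reduces essentially every question about a localizing subcategory of $\bfD(\A)$ to a question about cohomology in $\A$, because $H^n$ commutes with coproducts and direct summands, and every complex reconstructs from its cohomology in this way. The bridge between the two sides is the reformulation $M\in H^0\C$ iff $M[0]\in\C$, valid for any thick $\C$: the ``if'' direction is obvious, and the ``only if'' follows because $M[0]$ appears as a direct summand of any $X\in\C$ with $M=H^0X$, via Lemma~\ref{le:formal}.

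First I would show that $\C\mapsto H^0\C$ sends localizing subcategories to extension closed abelian subcategories of $\A$ closed under coproducts. Closure under coproducts is immediate from the above reformulation and the fact that $H^0$ commutes with coproducts. Closure under extensions follows by embedding a short exact sequence of $\A$ into a distinguished triangle in $\bfD(\A)$ and invoking triangulatedness of $\C$. For the abelian subcategory property, I would consider a morphism $f\colon M\to N$ with $M[0],N[0]\in\C$, compute that $\Cone(f)$ has nonzero cohomology only in degrees $-1$ and $0$, equal to $\Ker f$ and $\Coker f$ respectively, and then apply Lemma~\ref{le:formal} together with thickness of $\C$ to deduce $\Ker f,\Coker f\in H^0\C$.

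Conversely, given an extension closed abelian subcategory $\X$ of $\A$ closed under coproducts, I would verify that $\bfD_\X(\A)$ is localizing. Closure under shifts is tautological; closure under coproducts and direct summands follows because $H^n$ commutes with these operations and $\X$, being an abelian subcategory, is closed under direct summands inside $\A$. The delicate point, and the one I expect to be the main obstacle, is closure under cones: for a distinguished triangle $X\to Y\to Z\to X[1]$ with $X,Y\in\bfD_\X(\A)$, the long exact cohomology sequence yields a short exact sequence
$$0\to\Coker\bigl(H^nX\to H^nY\bigr)\to H^nZ\to\Ker\bigl(H^{n+1}X\to H^{n+1}Y\bigr)\to 0,$$
whose outer terms lie in $\X$ because the inclusion $\X\hookrightarrow\A$ is exact; extension closure of $\X$ then gives $H^nZ\in\X$.

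Finally I would verify that the two assignments are mutually inverse. The identity $H^0(\bfD_\X(\A))=\X$ is immediate from $M\cong H^0(M[0])$. For $\bfD_{H^0\C}(\A)=\C$, Lemma~\ref{le:formal} is again decisive: any $X$ with all $H^nX\in H^0\C$ decomposes as a coproduct of the complexes $(H^nX)[-n]$, each of which lies in $\C$ by shift-closure, so coproduct-closure forces $X\in\C$; conversely, for $X\in\C$ each $(H^nX)[-n]$ is a direct summand of $X$ by Lemma~\ref{le:formal}, hence lies in $\C$ by thickness, so $H^nX\in H^0\C$ for every $n$.
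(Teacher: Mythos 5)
Your proof is correct. The paper gives no argument of its own for Proposition~\ref{pr:thick-corr}; it simply cites \cite[Theorem~6.1]{Br} and marks the statement as proved. Your write-up supplies exactly the missing argument, and with the expected tools: the formality Lemma~\ref{le:formal}, the bridge $M\in H^0\C\Leftrightarrow M[0]\in\C$ valid for any thick $\C$ (the ``only if'' direction being a direct-summand argument via formality), and the long exact cohomology sequence to show that $\bfD_\X(\A)$ is closed under cones. Each step checks out, including the one you rightly flag as delicate, where exactness of the inclusion $\X\hookrightarrow\A$ and extension-closure of $\X$ combine to place $H^nZ$ in $\X$; the cone computation for a morphism $f\colon M\to N$ with $M[0],N[0]\in\C$ gives closure of $H^0\C$ under kernels and cokernels, and coproduct-closure gives the rest of the abelian-subcategory requirement. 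This is presumably the same argument Br\"uning gives, so you have not deviated from the intended route --- you have just made it explicit.
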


\begin{rem}\label{re:thick}
The bijection in Proposition~\ref{pr:thick-corr} has an analogue for
thick subcategories.  Given any hereditary abelian category $\B$, the functor
$H^0\colon\bfD^b(\B)\to\B$ induces a bijection between the thick
subcategories of $\bfD^b(\B)$ and the extension closed abelian
subcategories of $\B$; see \cite[Theorem~5.1]{Br}.
\end{rem}

Next we extend these maps to bijections between Ext-orthogonal pairs.

\begin{prop}\label{pr:thick}
The functor $H^0\colon\bfD(\A)\to\A$ induces a bijection between the
Ext-orthogonal pairs for $\bfD(\A)$ and the Ext-orthogonal pairs for
$\A$.  The inverse map sends a pair $(\X,\Y)$ for $\A$ to
$(\bfD_\X(\A),\bfD_\Y(\A))$.
\end{prop}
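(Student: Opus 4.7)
The plan is to verify that the two assignments
\begin{equation*}
(\X,\Y)\longmapsto(\bfD_\X(\A),\bfD_\Y(\A))\qquad\text{and}\qquad(\U,\V)\longmapsto(H^0\U,H^0\V)
\end{equation*}
are mutually inverse. The composition $(\X,\Y)\mapsto(\bfD_\X(\A),\bfD_\Y(\A))\mapsto(H^0\bfD_\X(\A),H^0\bfD_\Y(\A))=(\X,\Y)$ is already supplied by Proposition~\ref{pr:thick-corr}, so the real content lies in the other composition and in checking that both assignments land in Ext-orthogonal pairs.

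For the forward assignment, let $(\X,\Y)$ be Ext-orthogonal in $\A$. Given $X\in\bfD_\X(\A)$ and $Y\in\bfD_\Y(\A)$, I would combine the coproduct isomorphism for $X$ and the product isomorphism for $Y$ from Lemma~\ref{le:formal} to compute, for every $k\in\bbZ$,
\begin{equation*}
\Hom_{\bfD(\A)}(X,Y[k])\;\cong\;\prod_{m,n\in\bbZ}\Ext^{k+m-n}_\A(H^mX,H^nY).
\end{equation*}
Since $\A$ is hereditary only the terms with $k+m-n\in\{0,1\}$ can be nonzero, and these vanish because $H^mX\in\X$ is Ext-orthogonal to $H^nY\in\Y$. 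This gives $\bfD_\Y(\A)\subseteq\bfD_\X(\A)^\perp$ and $\bfD_\X(\A)\subseteq{}^\perp\bfD_\Y(\A)$. For the reverse inclusions I would take $W\in\bfD_\X(\A)^\perp$ and test only against stalks $X[0]$ with $X\in\X$: the product decomposition of $W$ yields $\prod_n\Ext^{k-n}_\A(X,H^nW)=0$ for every $k$, and since a product of abelian groups vanishes factor-wise this forces $H^nW\in\X^\perp=\Y$ for all $n$, so $W\in\bfD_\Y(\A)$. The symmetric argument handles ${}^\perp\bfD_\Y(\A)\subseteq\bfD_\X(\A)$.

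For the inverse assignment, let $(\U,\V)$ be Ext-orthogonal in $\bfD(\A)$. Both are thick, and $\U={}^\perp\V$ is closed under coproducts (because $\Hom$ and $\Ext$ out of a coproduct is a product of zero groups), so $\U$ is localizing and Proposition~\ref{pr:thick-corr} gives $\U=\bfD_{H^0\U}(\A)$. The delicate half is the analogous statement for $\V$. Here I use that for any $V\in\V$ the isomorphism $V\cong\coprod_n(H^nV)[-n]$ from Lemma~\ref{le:formal} exhibits each $(H^nV)[-n]$ as a direct summand of $V$; thickness of $\V$ then places every shifted stalk $(H^nV)[0]$ back in $\V$, whence $H^nV\in H^0\V$ and $\V\subseteq\bfD_{H^0\V}(\A)$. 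For the converse take $W$ with $H^nW\in H^0\V$ for all $n$, write $H^nW=H^0V_n$ with $V_n\in\V$, and by the same summand argument conclude $(H^nW)[-n]\in\V$. Since $\V=\U^\perp$ is closed under those products that exist in $\bfD(\A)$, the product decomposition $W\cong\prod_n(H^nW)[-n]$ from Lemma~\ref{le:formal} puts $W$ inside $\V$.

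Once both identities $\U=\bfD_{H^0\U}(\A)$ and $\V=\bfD_{H^0\V}(\A)$ are established, the composition $(\U,\V)\mapsto(H^0\U,H^0\V)\mapsto(\bfD_{H^0\U}(\A),\bfD_{H^0\V}(\A))$ is the identity, and the induced pair $(H^0\U,H^0\V)$ is automatically Ext-orthogonal in $\A$ because the already-Ext-orthogonal pair $(\bfD_{H^0\U}(\A),\bfD_{H^0\V}(\A))$ determines it via the forward direction. The main obstacle I foresee is the asymmetry between the coproduct closure of $\U$ and the product closure of $\V$: one has to match the correct side of Lemma~\ref{le:formal} to the closure property of each half of the pair, and extract stalks purely from thickness rather than from biproduct closure.
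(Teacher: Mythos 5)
Your proof is correct and follows essentially the same strategy as the paper's: both hinge on Lemma~\ref{le:formal} to reduce Ext-orthogonality in $\bfD(\A)$ to degree-wise Ext-orthogonality in $\A$, and both invoke Proposition~\ref{pr:thick-corr} to match the bijections. The paper compresses your detailed Hom computations and stalk-complex testing into the single observation that $\Ext^*_\A(X,Y)=0$ iff $\Ext^*_\A(H^pX,H^qY)=0$ for all $p,q$, and disposes of the inverse map more quickly by noting that an Ext-orthogonal pair is determined by its first half, whereas you also give a direct thickness/product-closure argument for $\V=\bfD_{H^0\V}(\A)$.
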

\begin{proof}
First observe that for each pair of complexes $X,Y\in\bfD(\A)$, we
have $\Ext_\A^*(X,Y)=0$ if and only if $\Ext_\A^*(H^pX,H^qY)=0$ for
all $p,q\in\bbZ$. This is a consequence of Lemma~\ref{le:formal}. It
follows that $H^0$ and its inverse send Ext-orthogonal pairs to
Ext-orthogonal pairs. Each Ext-orthogonal pair is determined by its
first half, and therefore an application of
Proposition~\ref{pr:thick-corr} shows that both maps are mutually
inverse.
\end{proof}

\subsection*{Localization functors}

Let $\T$ be a triangulated category. A \emph{localization functor}
$L\colon\T\to\T$ is an exact functor that admits a natural
transformation $\eta\colon\Id_\T\to L$ such that $L\eta_X$ is
an isomorphism and $L\eta_X=\eta_{LX}$ for all objects $X\in\T$.
Basic facts about localization functors one finds, for example, in \cite[\S3]{BIK}.

\begin{prop}\label{pr:local}
Let $\A$ be a hereditary abelian category. For a full subcategory $\X$
of $\A$ the following are equivalent. 
\begin{enumerate} 
\item There exists a localization functor $L\colon\bfD(\A)\to\bfD(\A)$
such that $\Ker L=\bfD_\X(\A)$. 
\item There exists a complete Ext-orthogonal pair $(\X,\Y)$ for $\A$.
\end{enumerate}
\end{prop}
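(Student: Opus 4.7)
The strategy is to treat the two implications independently, using throughout the standard fact from Bousfield localization theory that a localization functor $L$ on $\bfD(\A)$ with $\Ker L = \C$ exists if and only if every $Z \in \bfD(\A)$ fits in a triangle $Z' \to Z \to Z'' \to Z'[1]$ with $Z' \in \C$ and $Z'' \in \C^\perp$. Proposition~\ref{pr:thick} supplies the identification $\bfD_\X(\A)^\perp = \bfD_\Y(\A)$ for $\Y = \X^\perp$, so this dictionary translates directly between Ext-orthogonal pairs in $\A$ and in $\bfD(\A)$, while Lemma~\ref{le:formal} reduces questions about arbitrary objects of $\bfD(\A)$ to those concentrated in a single degree.

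For (1) $\Rightarrow$ (2), I apply the Bousfield triangle $\Gamma M \to M \to LM \to \Gamma M[1]$ to each $M \in \A$, with $\Gamma M \in \bfD_\X(\A)$ and $LM \in \bfD_\Y(\A)$. Taking the cohomology long exact sequence and observing that $\X \cap \Y = 0$ (any object in the intersection is orthogonal to itself, hence zero), I deduce that $\Gamma M$ has cohomology concentrated in degrees $0,1$ and $LM$ in degrees $-1,0$. What remains of the long exact sequence is precisely the desired 5-term exact sequence, with $X_M = H^0(\Gamma M)$, $X^M = H^1(\Gamma M)$, $Y_M = H^{-1}(LM)$, $Y^M = H^0(LM)$.

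For (2) $\Rightarrow$ (1), Lemma~\ref{le:formal} together with closure of $\bfD_\X(\A)$ and $\bfD_\Y(\A)$ under coproducts and shifts reduces the problem to producing, for a fixed $M \in \A$, a triangle $U \to M \to V \to U[1]$ with $U \in \bfD_\X(\A)$ and $V \in \bfD_\Y(\A)$. I split the 5-term sequence into $0 \to K \to M \to L \to 0$ and $0 \to L \to Y^M \to X^M \to 0$, where $K = \Im(X_M \to M)$ and $L = \Im(M \to Y^M)$. Because $\A$ is hereditary, $\Ext^2(X^M, K) = 0$, so the connecting map $\Ext^1(X^M, M) \twoheadrightarrow \Ext^1(X^M, L)$ is surjective and I can lift the class $e_C$ of the second short exact sequence to an element $\xi \colon X^M[-1] \to M$ in $\bfD(\A)$. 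Setting $U = X_M \oplus X^M[-1] \in \bfD_\X(\A)$ and $(\a, \xi) \colon U \to M$, I let $V$ be the cone.

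The main computation is to identify $V$ as an object of $\bfD_\Y(\A)$. The octahedral axiom applied to $X_M \hookrightarrow U \xto{(\a, \xi)} M$ produces a triangle $X^M[-1] \to \Cone(\a) \to V \to X^M$. Lemma~\ref{le:formal} splits $\Cone(\a) \cong Y_M[1] \oplus L$ (since its cohomologies are $Y_M$ and $L$ in degrees $-1$ and $0$), and the induced map $X^M[-1] \to Y_M[1] \oplus L$ has components in $\Ext^2(X^M, Y_M) = 0$ and in $\Ext^1(X^M, L)$; the latter is precisely $e_C$ by our choice of $\xi$. Hence $V \cong Y_M[1] \oplus Y^M$ lies in $\bfD_\Y(\A)$, as required. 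The principal obstacle is this cone identification, which relies twice on the hereditary hypothesis: once to obtain the lift $\xi$, and once to invoke Lemma~\ref{le:formal} in splitting $\Cone(\a)$.
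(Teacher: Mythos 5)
Your argument is correct and follows the same overall route as the paper: in both, (1) $\Rightarrow$ (2) is read off from the cohomology long exact sequence of the localization triangle $\Ga M\to M\to LM$, and (2) $\Rightarrow$ (1) amounts to producing the triangle $X_M\oplus X^M[-1]\to M\to Y_M[1]\oplus Y^M$ from the three short exact pieces of $\e_M$, with the hereditary hypothesis entering exactly through the vanishing of $\Ext^2_\A$. The only real difference is cosmetic: the paper packages the construction as a $3\times 3$-type diagram grown out of one commuting square (whose commutativity is the $\Ext^2=0$ condition), while you build the morphism $(\a,\xi)\colon X_M\oplus X^M[-1]\to M$ by hand, lifting the class of $\g_M$ along the surjection $\Ext^1_\A(X^M,M)\twoheadrightarrow\Ext^1_\A(X^M,M'')$, and then identify its cone via the octahedral axiom and Lemma~\ref{le:formal}; this is the same octahedral manipulation arrived at from the inside rather than the outside.
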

\begin{proof}
(1) $\Rightarrow$ (2): The kernel $\Ker L$ and the essential image
$\Im L$ of a localization functor $L$ form an Ext-orthogonal pair for
$\bfD(\A)$; see for instance \cite[Lemma~3.3]{BIK}.  Then it follows
from Proposition~\ref{pr:thick} that the pair $(\X,\Y)=(H^0\Ker
L,H^0\Im L)$ is Ext-orthogonal for $\A$.

The localization functor $L$ comes equipped with a natural
transformation $\eta\colon\Id_{\bfD(\A)}\to L$, and for each complex $M$ we
complete the morphism $\eta_M\colon M\to LM$ to an exact
triangle
$$\Ga M\to M\to LM\to \Ga M[1].$$ Note that $\Ga M\in\Ker L$ and
$LM\in\Im L$ since $L\eta_M$ is an isomorphism and $L$ is exact. Now
suppose that $M$ is concentrated in degree zero. Applying $H^0$ to
this triangle yields an exact sequence
$$0\to Y_M\to X_M\to M\to Y^M\to X^M\to 0$$ with $X_M,X^M\in\X$ and
$Y_M,Y^M\in\Y$.

(2) $\Rightarrow$ (1): Let $(\X,\Y)$ be an Ext-orthogonal pair for
$\A$. This pair induces an Ext-orthogonal pair
$(\bfD_\X(\A),\bfD_\Y(\A))$ for $\bfD(\A)$ by
Proposition~\ref{pr:thick}.  In order to construct a localization
functor $L\colon\bfD(\A)\to\bfD(\A)$ such that $\Ker L=\bfD_\X(\A)$,
it is sufficient to construct for each object $M$ in $\bfD(\A)$ an
exact triangle $X\to M\to Y\to X[1]$ with $X\in\bfD_\X(\A)$ and
$Y\in\bfD_\Y(\A)$. Then one defines $LM=Y$ and the morphism $M\to Y$
induces a natural transformation $\eta\colon\Id_{\bfD(\A)}\to L$
having the required properties. In view of Lemma~\ref{le:formal} it is
sufficient to assume that $M$ is a complex concentrated in degree
zero.

Suppose that $M$ admits an approximation sequence
$$\e_M\colon\:\:0\to Y_M\to X_M\to M\to Y^M\to X^M\to 0$$ with
$X_M,X^M\in\X$ and $Y_M,Y^M\in\Y$.  Let $M'$ denote the image of
$X_M\to M$ and $M''$ the image of $M\to Y^M$. Then $\e_M$ induces the
following three exact sequences
\begin{align*}
\a_M\colon\:\:&0\to M'\to M\to M''\to 0,\\
\b_M\colon\:\:&0\to Y_M\to X_M\to M'\to 0,\\
\g_M\colon\:\:&0\to M''\to Y^M\to X^M\to 0.
\end{align*}
In $\bfD(\A)$ these three exact sequence give rise to the following
commuting square
$$\xymatrix{ X^M[-2]\ar[r]^{\g_M}\ar[d]^0&M''[-1]\ar[d]^{\a_M}\\
X_M\ar[r]^{\bar\b_M}&M' }$$ where $\bar\b_M$ is the second morphism in
$\b_M$.  Commutativity of the diagram is clear since $\Hom_{\bfD(\A)}(U[-2],V)=0$
for any $U,V\in\A$. An application of the octahedral axiom shows that
this square can be extended as follows to a diagram where each row and
each column is an exact triangle.
\[\xymatrix{
X^M[-2]\ar[r]\ar[d]^0&M''[-1]\ar[r]\ar[d]&Y^M[-1]\ar[r]\ar[d]^0&X^M[-1]\ar[d]^0\\
X_M\ar[r]\ar[d]&M'\ar[r]\ar[d]&Y_M[1]\ar[r]\ar[d]&X_M[1]\ar[d]\\
X_M\oplus X^M[-1]\ar[r]\ar[d]&M\ar[r]\ar[d]&Y_M[1]\oplus
Y^M\ar[r]\ar[d]&X_M[1]\oplus X^M\ar[d]\\
X^M[-1]\ar[r]&M''\ar[r]&Y^M\ar[r]&X^M }\] The first and third column
are split exact triangles, and this explains the objects appearing in the
third row. In particular, this yields
the desired exact
triangle $X\to M\to Y\to X[1]$ with $X\in\bfD_\X(\A)$ and
$Y\in\bfD_\Y(\A)$.
\end{proof}

\begin{rem}
The proof of the implication (2) $\Rightarrow$ (1) comes as a special
case of a more general result on the existence of exact triangles with
a specified long exact sequence of cohomology objects. We refer
to work of Neeman \cite{Ne2007} for more details.
\end{rem}

Next we formulate the functorial properties of the 5-term exact
sequence constructed in Proposition~\ref{pr:local}.

\begin{lem}\label{le:exact}
Let $\A$ be an abelian category and $(\X,\Y)$ an Ext-orthogonal pair
for $\A$. Suppose there is an exact sequence
$$\e_M\colon\:\:0\to Y_M\to X_M\to M\to Y^M\to X^M\to 0$$ in $\A$ with
$X_M,X^M\in\X$ and $Y_M,Y^M\in\Y$. 
\begin{enumerate}
\item The sequence $\e_M$ induces for all $X\in\X$ and $Y\in\Y$ bijections
$\Hom_\A(X,X_M)\to\Hom_\A(X,M)$ and $\Hom_\A(Y^M,Y)\to\Hom_\A(M,Y)$.
\item Let $\e_N\colon\:\:0\to Y_N\to X_N\to N\to Y^N\to X^N\to 0$ be
an exact sequence in $\A$ with $X_N,X^N\in\X$ and $Y_N,Y^N\in\Y$. Then
each morphism $M\to N$ extends uniquely to a morphism $\e_M\to\e_N$ of
exact sequences.
\item Any exact sequence $0\to Y'\to X'\to M\to Y''\to X''\to 0$ in
$\A$ with $X',X''\in\X$ and $Y',Y''\in\Y$ is uniquely isomorphic to
$\e_M$.
\end{enumerate}
\end{lem}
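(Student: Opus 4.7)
My plan is to reduce the 5-term sequence $\e_M$ to three short exact sequences, prove part~(1) by long-exact-sequence computations, derive (2) from (1), and deduce (3) from (2) applied to the identity. To begin, introduce $M'=\Im(X_M\to M)$ and $M''=\Im(M\to Y^M)$, giving the three short exact sequences
\begin{align*}
\b_M &\colon 0\to Y_M\to X_M\to M'\to 0,\\
\a_M &\colon 0\to M'\to M\to M''\to 0,\\
\g_M &\colon 0\to M''\to Y^M\to X^M\to 0,
\end{align*}
exactly as in the proof of Proposition~\ref{pr:local}.

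For part~(1), I fix $X\in\X$ and apply $\Hom_\A(X,-)$. Ext-orthogonality yields $\Hom_\A(X,Y_M)=\Ext^1_\A(X,Y_M)=0$, so $\b_M$ produces a bijection $\Hom_\A(X,X_M)\xto{\sim}\Hom_\A(X,M')$. Similarly $\Hom_\A(X,Y^M)=0$ applied to $\g_M$ gives $\Hom_\A(X,M'')=0$, whence $\a_M$ produces a bijection $\Hom_\A(X,M')\xto{\sim}\Hom_\A(X,M)$; composing yields the first bijection. The second bijection is proved dually: for $Y\in\Y$, applying $\Hom_\A(-,Y)$ to $\g_M$ gives $\Hom_\A(Y^M,Y)\xto{\sim}\Hom_\A(M'',Y)$ via $\Hom_\A(X^M,Y)=\Ext^1_\A(X^M,Y)=0$, while $\b_M$ forces $\Hom_\A(M',Y)=0$, which via $\a_M$ gives $\Hom_\A(M'',Y)\xto{\sim}\Hom_\A(M,Y)$.

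For part~(2), given $\varphi\colon M\to N$, the bijection $\Hom_\A(X_M,X_N)\xto{\sim}\Hom_\A(X_M,N)$ from~(1) applied to $\e_N$ shows that $X_M\to M\xto{\varphi}N$ lifts uniquely to $X_M\to X_N$; dually, $M\xto{\varphi}N\to Y^N$ factors uniquely as $M\to Y^M\to Y^N$. The maps $Y_M\to Y_N$ and $X^M\to X^N$ are then forced: $Y_M\to X_M\to X_N$ lands in $\ker(X_N\to N)=Y_N$, and $Y^M\to Y^N\to X^N$ vanishes on $\Im(M\to Y^M)$ and hence descends to $X^M\to X^N$; uniqueness at each stage propagates along the whole ladder. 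For part~(3), apply~(2) to $\id_M$ in both directions between $\e_M$ and any competing sequence $\e_M'$ of the stated form; the composites $\e_M\to\e_M'\to\e_M$ and $\e_M'\to\e_M\to\e_M'$ both lift $\id_M$, and by the uniqueness in~(2) must be the respective identities, so $\e_M\cong\e_M'$ uniquely.

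I expect the main obstacle to be the uniqueness verification in~(2): once $X_M\to X_N$ and $Y^M\to Y^N$ are pinned down by~(1), one must check that the remaining ladder maps $Y_M\to Y_N$ and $X^M\to X^N$ are \emph{uniquely} forced, using the universal properties of kernels and cokernels together with the vanishing of composites such as $M\to Y^M\to X^M$. This is a routine but multi-step diagram chase, and it is the step where the Ext-orthogonality of $(\X,\Y)$ is implicitly used to eliminate all ambiguity.
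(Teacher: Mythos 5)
Your proof is correct and follows essentially the same route as the paper: decompose $\e_M$ at the object $M'=\ker(M\to Y^M)=\Im(X_M\to M)$, use Ext-orthogonality to establish the two bijections in (1), and then derive (2) and (3) formally. The paper's proof is terser (it only sketches (1) and states that (2) and (3) are immediate consequences), but it is the same argument; your spelled-out diagram chase for the uniqueness of the ladder maps $Y_M\to Y_N$ and $X^M\to X^N$ via monicity of $Y_N\to X_N$ and epimorphism of $Y^M\to X^M$ is precisely the content the paper is implicitly invoking.
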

\begin{proof}
We prove part (1). Then parts (2) and (3) are immediate consequences.

Fix an object $X\in\X$. The map
$\mu\colon\Hom_\A(X,X_M)\to\Hom_\A(X,M)$ is injective because
$\Hom_\A(X,Y_M)=0$. Any morphism $X\to M$ factors through the kernel
$M'$ of $M\to Y^M$ since $\Hom_\A(X,Y^M)=0$. The induced morphism
$X\to M'$ factors through $X_M\to M'$ since $\Ext_\A^1(X,Y_M)=0$. Thus
$\mu$ is surjective. The argument for the other map
$\Hom_\A(Y^M,Y)\to\Hom_\A(M,Y)$ is dual.
\end{proof}

\subsection*{Ext-orthogonal pairs for Grothendieck categories}
\label{se:groth}

Now we give the proof of Theorem~\ref{th:perpX}.  The basic idea is to
establish a localization functor for $\bfD(\A)$ and to derive the
exact approximation sequence in $\A$ by taking the cohomology of
some appropriate exact triangle as in Proposition~\ref{pr:local}.

\begin{proof}[Proof of Theorem~\ref{th:perpX}]
Let $\X$ denote the smallest extension closed abelian subcategory of
$\A$ that contains $X$ and is closed under coproducts. Then
Proposition~\ref{pr:thick-corr} implies that $\bfD_\X(\A)$ is the
smallest localizing subcategory of $\bfD(\A)$ containing $X$.  Thus
there exists a localization functor $L\colon\bfD(\A)\to\bfD(\A)$ with
$\Ker L=\bfD_\X(\A)$. This is a result which goes back to
Bousfield's work in algebraic topology, \cite{B}.
In the context of derived categories we refer to
\cite[Theorem~5.7]{AJS}.  Now apply Proposition~\ref{pr:local} to get
the 5-term exact sequence for each object $M$ in $\A$.  The properties
of this sequence follow from Lemma~\ref{le:exact}.
\end{proof}

\begin{rem}
We do not know an example of an Ext-orthogonal pair $(\X,\Y)$ for a hereditary
Grothendieck category such that the  pair $(\X,\Y)$ is not complete.
\end{rem}

Ext-orthogonal pairs naturally arise also for non-hereditary
abelian categories. Here we mention one such class of examples, but we do not know
whether or when exactly they are complete:

\begin{exm} \label{ex:groth_loc}
Let $\A$ be any Grothendieck category and $\X$ a \emph{localizing
subcategory}. That is, $\X$ is a full subcategory closed under taking
coproducts and such that for any exact sequence $0\to M'\to M\to M''\to 0$
in $\A$ we have $M\in\X$ if and only if $M',M''\in\X$.  Set
$\Y=\X^\perp$ and let $\Y_\inj$ denote the full subcategory of
injective objects of $\A$ contained in $\Y$. Then $\X={^\perp\Y_\inj}$ and therefore
$(\X,\Y)$ is an Ext-orthogonal pair for $\A$; see \cite[III.4]{Ga} for
details.
\end{exm}

\subsection*{Torsion and cotorsion pairs}\label{se:tor-cotor}

We also sketch an interpretation of an Ext-orthogonal pair in terms of
torsion and cotorsion pairs. Here, a pair $(\U,\V)$ of full
subcategories of $\A$ is called a \emph{torsion pair} if $\U$ and $\V$
are orthogonal to each other with respect to $\Hom_\A(-,-)$.
Analogously, a pair of full subcategories is a \emph{cotorsion pair}
if both categories are orthogonal to each other with respect to
$\coprod_{n>0}\Ext^n_\A(-,-)$.

Let $\A$ be an abelian category and $(\X,\Y)$ an Ext-orthogonal pair.
The subcategory $\X$ generates a torsion pair $(\X_0,\Y_0)$ and a
cotorsion pair $(\X_1,\Y_1)$ for $\A$, if one defines the
corresponding full subcategories of $\A$ as follows:
\begin{align*}
\Y_0&=\{Y\in\A\mid\Hom_\A(X,Y)=0\text{ for all }X\in\X\},\\
\X_0&=\{X\in\A\mid\Hom_\A(X,Y)=0\text{ for all }Y\in\Y_0\},\\
\Y_1&=\{Y\in\A\mid\Ext^n_\A(X,Y)=0\text{ for all }X\in\X,\,n>0\},\\
\X_1&=\{X\in\A\mid\Ext^n_\A(X,Y)=0\text{ for all }Y\in\Y_1,\,n>0\}.
\end{align*}
Note that $\X=\X_0\cap\X_1$ and $\Y=\Y_0\cap\Y_1$.  In particular, one
recovers the pair $(\X,\Y)$ from $(\X_0,\Y_0)$ and $(\X_1,\Y_1)$.

Suppose an object $M\in\A$ admits an approximation sequence
$$\e_M\colon\:\:0\to Y_M\to X_M\to M\to Y^M\to X^M\to 0$$ with
$X_M,X^M\in\X$ and $Y_M,Y^M\in\Y$. We give the following
interpretation of this sequence.  Let $M'$ denote the image of $X_M\to
M$ and $M''$ the image of $M\to Y^M$. Then there are three short exact
sequences:
\begin{align*}
\a_M\colon\:\:&0\to M'\to M\to M''\to 0,\\
\b_M\colon\:\:&0\to Y_M\to X_M\to M'\to 0,\\
\g_M\colon\:\:&0\to M''\to Y^M\to X^M\to 0.
\end{align*}
The sequence $\a_M$ is the approximation sequence of $M$ with respect
to the torsion pair $(\X_0,\Y_0)$, that is, $M'\in\X_0$ and
$M''\in\Y_0$. On the other hand, $\b_M$ and $\g_M$ are approximation
sequences of $M'$ and $M''$ respectively, with respect to the
cotorsion pair $(\X_1,\Y_1)$, that is, $X_M,X^M\in\X_1$ and
$Y_M,Y^M\in\Y_1$. Thus the 5-term exact sequence $\e_M$ is obtained by
splicing together three short exact approximation sequences.
 
Suppose finally that the Ext-orthogonal pair $(\X,\Y)$ is complete.
It is not hard to see that then the
associated torsion pair $(\X_0,\Y_0)$ has an explicit description:
we have $\X_0=\Fac\X$ and $\Y_0=\Sub \Y$, where
$$\Fac\X=\{X/U\mid U\subseteq X,\,X\in\X\}\quad\text{and}\quad
\Sub\Y=\{U\mid U\subseteq Y,\,Y\in\Y\}.$$

\section{Homological epimorphisms}

From now on we will study Ext-orthogonal pairs only for module categories. Thus
we fix a ring $A$ and denote by $\Mod A$ the category of (right)
$A$-modules. The full subcategory formed by all finitely presented
$A$-modules is denoted by $\mod A$. 

Most of our results require the ring $A$ to be (right) \emph{hereditary}. This
means the category of $A$-modules is hereditary, that is,
$\Ext_A^n(-,-)$ vanishes for all $n>1$.

We are going to show that Ext-orthogonal pairs
for module categories over hereditary rings are
closely related to homological epimorphisms.  Recall that a ring
homomorphism $A\to B$ is a \emph{homological epimorphism} if
$$B\otimes_AB\cong B\quad\text{and}\quad\Tor_n^A(B,B)=0\quad\text{for
all}\quad n>0,$$ or equivalently, if restriction induces isomorphisms
$$\Ext_B^*(X,Y)\xto{\sim}\Ext_A^*(X,Y)$$ for all $B$-modules $X,Y$;
see \cite{GL} for details. The first observation is that every homological
epimorphism naturally induces two complete Ext-orthogonal pairs:

\begin{prop}\label{pr:hom-epi1}
Let $A$ be a hereditary ring and $f\colon A\to B$ a homological
epimorphism. Denote by $\Y$ the category of $A$-modules which are
restrictions of modules over $B$. Set $\X={^\perp\Y}$ and
$\Y^\perp=\Z$.  Then $(\X,\Y)$ and $(\Y,\Z)$ are complete
Ext-orthogonal pairs for $\Mod A$ with $\Y=(\Ker f\oplus\Coker
f)^\perp$ and $\Z=B^\perp$.
\end{prop}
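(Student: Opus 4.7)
The plan is to apply Theorem~\ref{th:perpX} twice---once with the generator $\Ker f\oplus\Coker f$, and once with generator $B$---and to identify the two resulting complete Ext-orthogonal pairs with $(\X,\Y)$ and $(\Y,\Z)$.

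The key step is the identification $\Y=(\Ker f\oplus\Coker f)^\perp$, which is also where the full strength of the homological epimorphism hypothesis is used. I would work in the derived category $\bfD(\Mod A)$. Let $C_f$ denote the cone of $f\colon A\to B$. The triangle $A\to B\to C_f\to A[1]$ gives $H^{-1}(C_f)=\Ker f$ and $H^0(C_f)=\Coker f$, and since $\Mod A$ is hereditary, Lemma~\ref{le:formal} splits $C_f\cong(\Ker f)[1]\oplus\Coker f$. The homological epimorphism condition $B\lotimes_A B\cong B$ then translates, by applying $-\lotimes_A B$ to the above triangle, into $C_f\lotimes_A B=0$. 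Using the standard derived change-of-rings adjunction $\bfR\Hom_A(X,N)\cong\bfR\Hom_B(X\lotimes_A B,N)$ with $X=C_f$ and $N\in\Y$, we obtain $\bfR\Hom_A(C_f,N)=0$, so $\Y\subseteq(\Ker f\oplus\Coker f)^\perp$. Conversely, for $N\in(\Ker f\oplus\Coker f)^\perp$ the splitting of $C_f$ yields $\bfR\Hom_A(C_f,N)=0$ directly, and the long exact sequence attached to the triangle $A\to B\to C_f$ forces $\Hom_A(B,N)\to\Hom_A(A,N)=N$ to be an isomorphism. By the standard characterization of the essential image of the restriction functor along a ring epimorphism, this places $N$ in $\Y$.

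With $\Y=(\Ker f\oplus\Coker f)^\perp$ established, Theorem~\ref{th:perpX} applied to $\Ker f\oplus\Coker f$ immediately yields the complete Ext-orthogonal pair $(\X,\Y)$. For the second pair, I would apply Theorem~\ref{th:perpX} to $B$, producing a complete Ext-orthogonal pair $(\Y',B^\perp)$, where $\Y'$ is the smallest extension closed abelian subcategory of $\Mod A$ closed under coproducts containing $B$. The inclusion $\Y'\subseteq\Y$ holds because $\Y$ itself has all these properties: it is abelian since restriction along a ring epimorphism is fully faithful and exact, closed under coproducts since coproducts of $B$-modules are $B$-modules, and extension closed from the $\perp$-description just obtained. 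For $\Y\subseteq\Y'$, I would use that $B$ is itself hereditary (as the homological epimorphism identity $\Ext^n_B=\Ext^n_A$ on $B$-modules forces vanishing for $n\geq 2$); consequently every $B$-module $Y$ admits a two-term free presentation $B^{(J_1)}\to B^{(J_0)}\to Y\to 0$ in $\Mod B$ which remains exact in $\Mod A$, and since $B^{(J_i)}\in\Y'$ and $\Y'$ is abelian, the cokernel $Y$ lies in $\Y'$. Thus $\Y'=\Y$, and the Ext-orthogonality of the pair then forces $\Z=\Y^\perp=B^\perp$.

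The principal obstacle is the identification of $\Y$ with $(\Ker f\oplus\Coker f)^\perp$; once this is established, the two applications of Theorem~\ref{th:perpX} proceed essentially formally.
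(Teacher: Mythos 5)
Your proof is correct and follows the same global strategy as the paper: apply Theorem~\ref{th:perpX} once with generator $\Ker f\oplus\Coker f$ and once with generator $B$, then identify the resulting pairs with $(\X,\Y)$ and $(\Y,\Z)$. The genuine difference lies in how the key identity $\Y=(\Ker f\oplus\Coker f)^\perp$ is established. The paper dispatches it with a terse ``simple calculation'' working directly in $\Mod A$: one applies $\Hom_A(-,N)$ to the two short exact sequences obtained by splitting $0\to\Ker f\to A\to B\to\Coker f\to 0$ at the image, and uses heredity to kill the relevant $\Ext^2$ terms; the isomorphism $\Hom_A(B,N)\xto{\sim}N$ then becomes visible. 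You instead work in $\bfD(\Mod A)$ with the cone $C_f\cong(\Ker f)[1]\oplus\Coker f$, translate the homological epimorphism condition into $C_f\lotimes_A B=0$, and invoke the derived adjunction $\bfR\Hom_A(C_f,N)\cong\bfR\Hom_B(C_f\lotimes_A B,N)$. This is a cleaner, more conceptual packaging of the same content and nicely isolates exactly where the $\Tor$-vanishing of a homological epimorphism is used. You also spell out, in both inclusions, why $\Y$ is the smallest extension closed abelian subcategory of $\Mod A$ that is closed under coproducts and contains $B$; the paper only flags this as an observation. One small superfluity: you invoke heredity of $B$ to produce a two-term free presentation $B^{(J_1)}\to B^{(J_0)}\to Y\to 0$, but such a presentation exists over any ring, so the heredity of $B$ is not actually needed at that point (though it is a correct and useful observation elsewhere, and is indeed proved in the paper in Proposition~\ref{pr:hom-epi2}).
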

\begin{proof}
We wish to apply Theorem~\ref{th:perpX} which provides a construction for
complete Ext-orthogonal pairs.

First observe that $\Y$ is the smallest extension closed abelian
subcategory of $\Mod A$ closed under coproducts and containing
$B$. This yields $\Z=B^\perp$.

Next we show that $\Y=(\Ker f\oplus\Coker f)^\perp$.  In fact, an
$A$-module $Y$ is the restriction of a $B$-module if and only if $f$
induces an isomorphism $\Hom_A(B,Y)\to\Hom_A(A,Y)$. Using the
assumptions on $A$ and $f$, a simple calculation shows that this
implies $\Y=(\Ker f\oplus\Coker f)^\perp$.

It remains to apply Theorem~\ref{th:perpX}. Thus $(\X,\Y)$ and
$(\Y,\Z)$ are complete Ext-orthogonal pairs.
\end{proof}

Now we use a crucial theorem of Gabriel and de la Pe\~na.
It identifies, only by their closure properties, the
full subcategories of a module category $\Mod A$ that arise as the
images of the restriction functors $\Mod B\to\Mod A$ for ring
epimorphisms $A\to B$. In our version, we identify in a similar way the
essential images of the restriction functors of homological epimorphisms,
provided $A$ is hereditary.

\begin{prop}\label{pr:hom-epi2}
Let $A$ be a hereditary ring and $\Y$ an extension closed abelian
subcategory of $\Mod A$ that is closed under taking products and
coproducts. Then there exists a homological epimorphism $f\colon A\to
B$ such that the restriction functor $\Mod B\to\Mod A$ induces an
equivalence $\Mod B\xto{\sim}\Y$. 
\end{prop}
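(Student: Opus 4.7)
The plan is to first use the Gabriel--de la Pe\~na theorem to extract a ring epimorphism $f\colon A\to B$ whose restriction functor has essential image $\Y$, and then to upgrade $f$ to a homological epimorphism by combining the hereditariness of $A$ with the extension closedness of $\Y$.

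To get the ring epimorphism, observe that $\Y$ being an abelian subcategory of $\Mod A$ with exact inclusion means it is closed under kernels and cokernels computed in $\Mod A$. Together with closure under products and coproducts this gives closure under all small limits and colimits, so $\Y$ is a bireflective subcategory of $\Mod A$. The Gabriel--de la Pe\~na theorem then produces a ring epimorphism $f\colon A\to B$ such that the restriction functor $\Mod B\to\Mod A$ is fully faithful with essential image $\Y$. From now on I identify $\Mod B$ with $\Y\subseteq\Mod A$.

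To show $f$ is homological I use the Ext criterion recalled at the beginning of the section: it suffices to check that the canonical map $\Ext^n_B(X,Y)\to\Ext^n_A(X,Y)$ is an isomorphism for all $X,Y\in\Mod B$ and all $n\ge 0$. The case $n=0$ is precisely fully faithfulness of restriction. For $n=1$, surjectivity is immediate from extension closedness, since the middle term of any $\Mod A$-extension of two $B$-modules automatically lies in $\Y=\Mod B$; and injectivity follows from the observation that any $A$-linear splitting of a short exact sequence of $B$-modules is automatically $B$-linear by fully faithfulness. For $n\ge 2$ I verify that both sides vanish: $\Ext^n_A$ vanishes by the hypothesis on $A$, and $\Ext^n_B$ vanishes because $B$ turns out to be hereditary as well, as I argue next.

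Given $X,Y\in\Mod B$, pick a short exact sequence $0\to N\to Q\to X\to 0$ in $\Mod B$ with $Q$ projective over $B$; this sequence is also exact in $\Mod A$. Dimension shifting in $\Mod B$ gives $\Ext^2_B(X,Y)\cong\Ext^1_B(N,Y)$, which by the $n=1$ isomorphism equals $\Ext^1_A(N,Y)$. The long exact sequence of $\Ext^*_A(-,Y)$ applied to the same sequence gives a surjection $\Ext^1_A(Q,Y)\twoheadrightarrow\Ext^1_A(N,Y)$ since $\Ext^2_A(X,Y)=0$, while $\Ext^1_A(Q,Y)\cong\Ext^1_B(Q,Y)=0$ because $Q$ is $B$-projective. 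Hence $\Ext^2_B(X,Y)=0$, and iterating the argument closes the induction in all higher degrees. The main obstacle is precisely this last step, where the already established $n=1$ isomorphism, the hereditariness of $A$, and the $B$-projectivity of $Q$ must all be orchestrated simultaneously to shuttle vanishing information between $\Ext^*_A$ and $\Ext^*_B$.
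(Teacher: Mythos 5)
Your proof is correct and follows essentially the same strategy as the paper: invoke the Gabriel--de la Pe\~na theorem to obtain a ring epimorphism realizing $\Y$, then show the Ext-comparison map is an isomorphism in degrees $0$ and $1$ from exactness of the inclusion and extension-closedness, and finally use heredity of $A$ to force the higher Ext groups of $B$ to vanish. The only cosmetic difference is in the last step: the paper dimension-shifts in the second variable, observing that the $n=1$ isomorphism together with $\Ext^2_A=0$ makes $\Ext^1_B(X,-)$ right exact and hence $B$ hereditary, whereas you dimension-shift in the first variable via a $B$-projective presentation of $X$ and bounce back and forth between $\Ext^1_B$ and $\Ext^1_A$; these are dual formulations of the identical idea.
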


\begin{proof}
It follows from \cite[Theorem~1.2]{GP} that there exists a ring
epimorphism $f\colon A\to B$ such that the restriction functor $\Mod
B\to\Mod A$ induces an equivalence $\Mod B\xto{\sim}\Y$. To be more
specific, one constructs a left adjoint $F\colon \Mod A\to \Y$ for the
inclusion $\Y\to \Mod A$. Then $FA$ is a small projective generator
for $\Y$, because $A$ has this property for $\Mod A$ and the inclusion
of $\Y$ is an exact functor that preserves coproducts.  Thus one
takes for $f$ the induced map $A\cong\End_A(A)\to\End_A(FA)$.

We claim that restriction via $f$ induces an isomorphism
\begin{equation*}
\Ext_B^n(X,Y)\xto{\sim}\Ext_A^n(X,Y)
\end{equation*} 
for all $B$-modules $X,Y$ and
all $n\geq 0$. This is clear for $n=0,1$ since $\Y$ is extension
closed. On the other hand, the isomorphism for $n=1$ implies that
$\Ext_B^1(X,-)$ is right exact since $A$ is hereditary. It follows
that $B$ is hereditary and $\Ext_B^n(-,-)$ vanishes for all $n>1$. 
\end{proof}

We get as an immediate consequence that any class
$\mathcal{Y}$ satisfying the assumptions of Proposition~\ref{pr:hom-epi2}
belongs to two complete cotorsion pairs. In order to obtain more information
about the corresponding 5-term approximation sequences, we prefer, however, to
postpone this corollary after the following lemma:

\begin{lem}\label{le:hom-epi}
Let $A\to B$ be a homological epimorphism and denote by $\Y$ the
category of $A$-modules which are restrictions of modules over $B$.
\begin{enumerate}
\item The functor $\bfD(\Mod
A)\to\bfD(\Mod A)$ sending a complex $X$ to $X\otimes_A^\bfL B$ is a
localization functor with essential image equal to $\bfD_\Y(\Mod A)$.
\item The functor $\bfD(\Mod A)\to\bfD(\Mod A)$ sending a complex $X$
to the cone (which is in this case functorial) of the natural morphism $\RHom_A(B,X)\to X$ is a
localization functor with kernel equal to $\bfD_\Y(\Mod A)$.
\end{enumerate}
\end{lem}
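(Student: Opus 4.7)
The plan is as follows. For both parts, the approach is the same: exhibit for each $X\in\bfD(\Mod A)$ a distinguished triangle in $\bfD(\Mod A)$ with one term in $\bfD_\Y(\Mod A)$ and the other in $\bfD_\Y(\Mod A)^\perp$; by Bousfield localization theory (\cite[\S3]{BIK}) this produces the asserted localization functor. A standing observation is that $\bfD_\Y(\Mod A)$ is the smallest localizing subcategory of $\bfD(\Mod A)$ containing $B$; this combines Proposition~\ref{pr:thick-corr} with the description of $\Y$ from Proposition~\ref{pr:hom-epi1}. I will also use repeatedly that the homological epimorphism hypothesis on $f\colon A\to B$ is equivalent to the multiplication $\mu\colon B\otimes_A^\bfL B\to B$ being a quasi-isomorphism.

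For part (1), I would set $L_1 X=X\otimes_A^\bfL B$ and take $\eta_X\colon X\to L_1 X$ to be induced by $f$. Tensoring $\mu$ with $X$ gives a quasi-iso $\mu_X\colon X\otimes_A^\bfL B\otimes_A^\bfL B\to X\otimes_A^\bfL B$. Both candidate natural maps $L_1\eta_X$ and $\eta_{L_1 X}$ are right inverses of $\mu_X$ (a straightforward unwinding of definitions), hence in $\bfD(\Mod A)$ both coincide with the unique $\mu_X^{-1}$. This simultaneously gives that $L_1\eta_X$ is an isomorphism and that $L_1\eta_X=\eta_{L_1 X}$, verifying the two axioms of a localization functor. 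For the essential image, $L_1 X$ lies in $\bfD_\Y(\Mod A)$ since $\Tor_n^A(X,B)$ carries a natural right $B$-module structure; conversely, for $Y\in\Y$ associativity of the derived tensor product yields $Y\otimes_A^\bfL B\cong Y\otimes_B^\bfL(B\otimes_A^\bfL B)\cong Y$, and this extends to all of $\bfD_\Y(\Mod A)$ by Lemma~\ref{le:formal}.

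For part (2), I would define $L_2 X$ by the triangle $\RHom_A(B,X)\to X\to L_2 X\to\RHom_A(B,X)[1]$, where the first map is evaluation along $f$. The first term lies in $\bfD_\Y(\Mod A)$ because $\Ext_A^n(B,X)$ has a natural right $B$-module structure. To show $L_2 X\in\bfD_\Y(\Mod A)^\perp$, it suffices by the standing observation to verify $\RHom_A(B,L_2 X)=0$; applying $\RHom_A(B,-)$ to the defining triangle, the key computation is the chain of isomorphisms
\[
\RHom_A(B,\RHom_A(B,X))\cong\RHom_A(B\otimes_A^\bfL B,X)\cong\RHom_A(B,X)
\]
coming from derived tensor-Hom adjunction and the homological epimorphism hypothesis. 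A brief check identifies the connecting map with this isomorphism, so the third term vanishes. By Bousfield's criterion (e.g., \cite[Lemma~3.3]{BIK}), the resulting triangle produces a localization functor $L_2$ with $\Ker L_2=\bfD_\Y(\Mod A)$.

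The most delicate point I anticipate is the functoriality of the cone asserted in (2): mapping cones in a triangulated category are not functorial in general. Here, however, for any morphism $X\to X'$ the induced fill-in $L_2 X\to L_2 X'$ is unique because the obstruction lies in $\Hom_{\bfD(\Mod A)}(\RHom_A(B,X)[1],L_2 X')$, which vanishes by the orthogonality of $\bfD_\Y(\Mod A)$ and $\bfD_\Y(\Mod A)^\perp$ established above. The same kind of orthogonality underlies the uniqueness of $\mu_X^{-1}$ exploited in (1).
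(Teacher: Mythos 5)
Your proposal is correct, but it proceeds along a genuinely different route from the paper's proof. The paper works through the fully faithful derived restriction functor $f_*\colon\bfD(\Mod B)\to\bfD(\Mod A)$, observes that $f_*$ is naturally isomorphic to both $\RHom_B({_A}B,-)$ and $-\otimes^\bfL_B B_A$ and admits the adjoint pair $f^*=-\otimes_A^\bfL B$ (left) and $f^!=\RHom_A(B,-)$ (right), and then quotes the abstract machinery of \cite[Lemmas~3.1 and 3.3]{BIK}: $f_*f^*$ is automatically a localization functor, and completing the counit $f_*f^!X\to X$ to a triangle automatically gives a localization functor with kernel the essential image of $f_*$. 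You instead verify the localization axioms by hand: in (1) you show $L_1\eta_X$ and $\eta_{L_1X}$ are both right inverses of the quasi-isomorphism $X\otimes^\bfL\mu$ and hence coincide; in (2) you check the orthogonality $\RHom_A(B,L_2X)=0$ via the tensor-Hom adjunction and pass through \cite[\S3]{BIK} only at the end. The paper's argument is shorter and more conceptual, making the recollement structure visible from the outset; yours is more self-contained and exhibits the mechanics of the localization axioms explicitly, and it correctly anticipates and disposes of the functoriality-of-the-cone issue in (2) via the uniqueness of fill-ins against the orthogonal subcategory. Both arguments share the same unstated step, namely that the essential image of $f_*$ is exactly $\bfD_\Y(\Mod A)$ (equivalently, that $\bfD_\Y(\Mod A)$ is the localizing subcategory generated by $B$); your appeal to Lemma~\ref{le:formal} for this requires $A$ hereditary, which the lemma statement does not assume but which is the only case in which the paper later uses the result, so this is consistent with the level of detail in the paper's own proof.
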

\begin{proof}
Restriction along $f\colon A\to B$ identifies $\Mod B$ with $\Y$. The
functor induces an isomorphism
\begin{equation*}
\Ext_B^n(X,Y)\xto{\sim}\Ext_A^n(X,Y)
\end{equation*} 
for all $B$-modules $X,Y$ and all $n\geq 0$, because $f$ is a
homological epimorphism. This isomorphism implies that the induced
functor $f_*\colon\bfD(\Mod B)\to\bfD(\Mod A)$ is fully faithful with
essential image $\bfD_\Y(\Mod A)$. Moreover, $f_*$ is naturally isomorphic
to both $\RHom_B({_A}B,-)$ and $-\otimes^\bfL_BB_A$. It follows that:

(1) The functor $f_*$ admits a left adjoint $f^*=-\otimes^\bfL_AB$ and
we therefore have a localization functor $L\colon \bfD(\Mod
A)\to\bfD(\Mod A)$ sending a complex $X$ to $f_*f^*(X)$; see
\cite[Lemma~3.1]{BIK}.  It remains to note that the essential images of
$L$ and  $f_*$ coincide.

(2) The functor $f_*$ admits a right adjoint $f^!=\RHom_A(B,-)$ and we
therefore have a colocalization functor $\Ga\colon \bfD(\Mod
A)\to\bfD(\Mod A)$ sending a complex $X$ to $f_*f^!(X)$.  Note that
the adjunction morphism $\Ga X\to X$ is an isomorphism if and only if
$X$ belongs to $\bfD_\Y(\Mod A)$.  Completing $\Ga X\to X$ to a
triangle yields a well defined localization functor $\bfD(\Mod
B)\to\bfD(\Mod A)$ with kernel $\bfD_\Y(\Mod A)$; see
\cite[Lemma~3.3]{BIK}.
\end{proof}

Now we state the above mentioned immediate consequence of
Propositions~\ref{pr:hom-epi1} and \ref{pr:hom-epi2}, but with an alternative
and more explicit proof.

\begin{cor}\label{co:perp}
Let $A$ be a hereditary ring and $\Y$ an extension closed abelian
subcategory of $\Mod A$ that is closed under taking products and
coproducts. Set $\X={^\perp} \Y$ and $\Z=\Y^\perp$.  Then $(\X,\Y)$
and $(\Y,\Z)$ are both complete Ext-orthogonal pairs.
\end{cor}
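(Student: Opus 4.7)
The plan is to invoke Proposition~\ref{pr:hom-epi2} together with Lemma~\ref{le:hom-epi} and Proposition~\ref{pr:local}, and then read off the 5-term approximation sequences directly from the cohomology of the localization triangles. The advantage over a bare reduction to Proposition~\ref{pr:hom-epi1} is that both the middle morphism of the approximation and the outer terms acquire explicit formulas in terms of $\Tor_*^A(-,B)$ and $\Ext_A^*(B,-)$.

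First I would apply Proposition~\ref{pr:hom-epi2} to $\Y$ to obtain a homological epimorphism $f\colon A\to B$ whose restriction functor identifies $\Mod B$ with $\Y$. Since $A$ is hereditary, $B$ has projective dimension at most $1$ as an $A$-module, so both $-\otimes^\bfL_AB$ and $\RHom_A(B,-)$ have cohomology concentrated in two consecutive degrees. To obtain $(\X,\Y)$ as a complete Ext-orthogonal pair, I use Lemma~\ref{le:hom-epi}(1): the functor $L=-\otimes^\bfL_AB$ is a localization with essential image $\bfD_\Y(\Mod A)$. By Proposition~\ref{pr:thick} together with the triangulated Ext-orthogonal pair $(\Ker L,\Im L)$, the kernel equals $\bfD_\X(\Mod A)$ with $\X={^\perp}\Y$, and Proposition~\ref{pr:local} then gives completeness of $(\X,\Y)$. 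For an explicit sequence, I take the localization triangle $\Ga M\to M\to M\otimes^\bfL_AB\to\Ga M[1]$ for an $A$-module $M$ and apply $H^0$. Since $M$ is concentrated in degree $0$ and $\pd_AB\leq 1$, the long cohomology sequence collapses to
$$0\to\Tor_1^A(M,B)\to H^0(\Ga M)\to M\to M\otimes_AB\to H^1(\Ga M)\to 0,$$
in which $\Tor_1^A(M,B)$ and $M\otimes_AB$ are $B$-modules (hence in $\Y$) and $H^0(\Ga M),H^1(\Ga M)\in\X$ because $\Ga M\in\bfD_\X(\Mod A)$.

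The argument for $(\Y,\Z)$ is formally dual. By Lemma~\ref{le:hom-epi}(2), the assignment $M\mapsto\Cone(\RHom_A(B,M)\to M)$ defines a localization functor whose kernel is $\bfD_\Y(\Mod A)$; Proposition~\ref{pr:local} immediately yields completeness of $(\Y,\Z)$. Taking cohomology of the defining triangle $\RHom_A(B,M)\to M\to L'M\to\RHom_A(B,M)[1]$ and using again $\pd_AB\leq 1$ produces
$$0\to\Hom_A(B,M)\to M\to H^0(L'M)\to\Ext^1_A(B,M)\to H^1(L'M)\to 0,$$
with $\Hom_A(B,M),\Ext^1_A(B,M)\in\Y$ and $H^0(L'M),H^1(L'M)\in\Z$.

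Since the two key ingredients (Proposition~\ref{pr:hom-epi2} and Lemma~\ref{le:hom-epi}) are already at our disposal, there is no serious conceptual obstacle; the only point requiring care is the identification $\Ker(-\otimes^\bfL_AB)=\bfD_\X(\Mod A)$, which follows from combining the abstract Ext-orthogonality of the kernel/image of a localization functor with Proposition~\ref{pr:thick}. Everything else is routine bookkeeping made short by hereditarity, which forces all relevant derived functors to have only two nonvanishing cohomology groups.
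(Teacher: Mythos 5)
Your approach is essentially the same as the paper's: both proofs invoke Proposition~\ref{pr:hom-epi2} to produce the homological epimorphism $A\to B$, both use Lemma~\ref{le:hom-epi} to get the two localization functors with $\Im L_1=\bfD_\Y(\Mod A)=\Ker L_2$, both identify the remaining kernel and image with $\bfD_\X(\Mod A)$ and $\bfD_\Z(\Mod A)$ via \cite[Lemma~3.3]{BIK} and Proposition~\ref{pr:thick}, and both finish by applying Proposition~\ref{pr:local}. The explicit $5$-term sequences you add are the content of the paper's Remark~\ref{re:5seq}, so this is a welcome supplement.

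However, your cohomology computation for the $(\Y,\Z)$ side is off by one degree. From the triangle $\RHom_A(B,M)\to M\to L'M\to\RHom_A(B,M)[1]$, with $M$ in degree $0$ and $\RHom_A(B,M)$ having cohomology $\Hom_A(B,M)$ and $\Ext_A^1(B,M)$ in degrees $0$ and $1$, the long exact sequence yields
$$0\to H^{-1}(L'M)\to \Hom_A(B,M)\to M\to H^0(L'M)\to \Ext_A^1(B,M)\to 0,$$
with $H^1(L'M)=0$, not the sequence you wrote. Note that this has $M$ in the \emph{middle} (third) position, as the definition of a complete Ext-orthogonal pair $(\Y,\Z)$ requires ($0\to Z_M\to Y_M\to M\to Z^M\to Y^M\to 0$), whereas your version placed $M$ in the second position, which is not of the correct shape. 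This is a computational slip only: the completeness claim still follows from Proposition~\ref{pr:local} regardless, so the corollary itself is proved correctly.
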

\begin{proof}
There exists a homological epimorphism $f\colon A\to B$ such that
restriction identifies $\Mod B$ with $\Y$; see
Proposition~\ref{pr:hom-epi2}.  Then Lemma~\ref{le:hom-epi} produces two
localization functors $L_1,L_2\colon \bfD(\Mod A)\to\bfD(\Mod A)$ with
$\Im L_1=\bfD_\Y(\Mod A)=\Ker L_2$. Thus
$$\Ker L_1={^\perp}(\Im L_1)=\bfD_\X(\Mod A)\quad \text{and}\quad \Im
L_2=(\Ker L_2)^\perp=\bfD_\Z(\Mod A),$$ where in both cases the first
equality follows from \cite[Lemma~3.3]{BIK} and the second from
Proposition~\ref{pr:thick}.  It remains to apply
Proposition~\ref{pr:local} which yields in both cases for each $A$-module the
desired 5-term exact sequence.
\end{proof}

\begin{rem}\label{re:5seq}
The proof of Lemma~\ref{le:hom-epi} and Corollary~\ref{co:perp} yields for any $A$-module $M$ an
explicit description of some terms of the 5-term exact sequence
$\e_M$, using the homological epimorphism $A\to B$. In the first case,
we have
$$\e_M\colon\:\: 0\to \Tor_1^A(M,B)\to X_M\to M\to M\otimes_AB\to X^M\to 0,$$
and in the second case, we have
$$\e_M\colon\:\: 0\to Z_M\to \Hom_A(B,M)\to M\to Z^M\to\Ext_A^1(B,M)\to 0.$$
\end{rem}

We also mention another consequence of the above discussion, which is immediately
implied by Corollary~\ref{co:perp}. It reflects the fact that given a homological
epimorphism $A\to B$ and the fully faithful functor
$f_*\colon \bfD(\Mod B) \to \bfD(\Mod A)$ having both a left and a right adjoint, there
exists a corresponding recollement of the derived category $\bfD(\Mod A)$; see
\cite[\S4.13]{Kr2008}.

\begin{cor}
Let $A$ be a hereditary ring and $(\X,\Y)$ an Ext-orthogonal pair
for the category of $A$-modules.
\begin{enumerate}
\item There is an Ext-orthogonal pair $(\W,\X)$ if and only if
$\X$ is closed under products.
\item There is an Ext-orthogonal pair $(\Y,\Z)$ if and only if
$\Y$ is closed under coproducts.
\end{enumerate}
\end{cor}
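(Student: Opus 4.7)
The plan is to combine the symmetry between the two parts with Corollary~\ref{co:perp}, which is the key input: any extension closed abelian subcategory of $\Mod A$ that is closed under both products and coproducts sits inside two complete Ext-orthogonal pairs. So the whole proof amounts to checking that the closure property hypothesized in each part supplies precisely the missing ingredient needed to invoke that corollary.

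For the ``only if'' direction of (1), I would simply observe that whenever $(\W,\X)$ is an Ext-orthogonal pair we have $\X=\W^\perp$, and for any family $\{X_i\}$ in $\X$ and any $W\in\W$, the natural identification $\Ext_A^n(W,\prod X_i)\cong\prod_i\Ext_A^n(W,X_i)$ forces $\prod X_i\in\X$. The analogous computation for coproducts gives the ``only if'' direction of (2).

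For the ``if'' direction of (1), I would first record the standing properties: since $\X={}^{\perp}\Y$, the subcategory $\X$ is automatically closed under coproducts (because $\Ext_A^n(\coprod W_i,-)\cong\prod_i\Ext_A^n(W_i,-)$), and by the hereditary hypothesis on $A$ it is an extension closed abelian subcategory of $\Mod A$ (see the remark just before Theorem~\ref{th:perpX}). Adding the hypothesis that $\X$ is closed under products makes $\X$ satisfy exactly the hypotheses imposed on $\Y$ in Corollary~\ref{co:perp}, applied with $\X$ playing the role of $\Y$. Hence $({}^{\perp}\X,\X)$ is a complete Ext-orthogonal pair, and we take $\W={}^{\perp}\X$. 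The argument for the ``if'' direction of (2) is the mirror image: $\Y=\X^\perp$ is automatically closed under products and is extension closed abelian, so the added coproduct-closure hypothesis lets one again invoke Corollary~\ref{co:perp}, this time to conclude that $(\Y,\Y^\perp)$ is a complete Ext-orthogonal pair.

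There is essentially no obstacle here; the small point to keep straight is that Corollary~\ref{co:perp} is stated for a single category satisfying all three closure conditions and produces Ext-orthogonal pairs on both sides of it, so one has to feed the candidate $\X$ or $\Y$ into the appropriate slot. Everything else is the routine observation that ${}^\perp(-)$ and $(-)^\perp$ are closed under coproducts and products, respectively, in any abelian category.
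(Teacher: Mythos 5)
Your proof is correct and matches the paper's (unwritten) intent: the paper asserts the corollary is an immediate consequence of Corollary~\ref{co:perp}, and you have supplied exactly the routine bookkeeping that makes this precise — the automatic closure of ${}^{\perp}(-)$ under coproducts and of $(-)^{\perp}$ under products, the hereditary hypothesis for the extension-closed abelian property, and then feeding $\X$ (resp.\ $\Y$) into the hypothesis slot of Corollary~\ref{co:perp}.
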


\section{Examples}\label{se:exm}

We present a number of examples of Ext-orthogonal pairs which
illustrate the results of this work. The first example is classical
and provides one of the motivations for studying perpendicular
categories in representation theory of finite dimensional algebras.
We refer to Schofield's work \cite{Sch1986,Sch1991} which contains some
explicit calculations; see also \cite{GL,H}.

\begin{exm} 
Let $A$ be a finite dimensional hereditary algebra over a field $k$
and $X$ a finite dimensional $A$-module.  Then $X^\perp=\Y$ identifies
via a homological epimorphism $A\to B$ with the category of modules
over a $k$-algebra $B$ and this yields a complete Ext-orthogonal pair
$(\X,\Y)$. If $X$ is \emph{exceptional}, that is, $\Ext^1_A(X,X)=0$,
then $B$ is finite dimensional (see the proposition below) and often
can be constructed explicitly. We refer to~\cite{Sch1986} for particular examples.
Note that in this case for each finite dimensional $A$-module $M$ the
corresponding 5-term exact sequence $\e_M$ consists of finite
dimensional modules. Moreover, the category $\X$ is equivalent to the
module category of another finite dimensional algebra.  We do not know
of a criterion on $X$ that characterizes the fact that $B$ is finite
dimensional; see however the following proposition.
\end{exm}

\begin{prop}\label{pr:except}
Let $A$ be a finite dimensional hereditary algebra over a field $k$ and
$(\X,\Y)$ a complete Ext-orthogonal pair such that $\Y$ is closed
under coproducts. Fix a homological epimorphism $A\to B$ inducing an
equivalence $\Mod B\xto{\sim}\Y$. Then the following are equivalent.
\begin{enumerate}
\item There exists an exceptional module $X\in\mod A$ such that $\Y=X^\perp$.
\item The algebra $B$ is finite dimensional over $k$. 
\item For each $M\in\mod A$, the 5-term exact sequence $\e_M$ belongs to $\mod A$.
\end{enumerate}
\end{prop}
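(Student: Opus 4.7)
The proof splits into the equivalences $(2)\Leftrightarrow(3)$ and $(2)\Leftrightarrow(1)$. For $(3)\Rightarrow(2)$, I would apply (3) to $M=A$: since $A$ is projective, $\Tor_1^A(A,B)=0$ and $A\otimes_A B=B$, so the explicit description in Remark~\ref{re:5seq} reduces to $0\to X_A\to A\to B\to X^A\to 0$, forcing $B$ to be finite dimensional. Conversely, if (2) holds, then for any $M\in\mod A$ the outer terms $\Tor_1^A(M,B)$ and $M\otimes_A B$ of the 5-term sequence are finite dimensional, and the middle terms $X_M, X^M$ then follow by a dimension count; this gives $(2)\Rightarrow(3)$.

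For $(1)\Rightarrow(2)$, I would invoke Bongartz's lemma combined with the classical Geigle--Lenzing theorem \cite{GL}: any exceptional module $X\in\mod A$ admits a complement $T\in\mod A$ so that $X\oplus T$ is a tilting module, and then $X^\perp \simeq \Mod B$ with $B=\End_A(T)^{\op}$ finite dimensional.

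For the converse $(2)\Rightarrow(1)$, the crucial observation is that $B$, viewed as an $A$-module via the homological epimorphism $f$, is itself rigid: $\Ext^1_A(B,B) \cong \Ext^1_B(B,B) = 0$. I would therefore apply Bongartz's lemma to this rigid $A$-module to produce a rigid complement $X\in\mod A$ with $X\oplus B$ a tilting $A$-module, so $X$ is exceptional. The main task is then to prove $X^\perp = \Y$. The inclusion $\Y \subseteq X^\perp$ should follow from $X \in \X$ via the defining short exact sequence $0\to A\to E\to B^r\to 0$ of the Bongartz complement together with Proposition~\ref{pr:hom-epi1}. The reverse $X^\perp \subseteq \Y$ is the hardest step and would require identifying $X^\perp$ with $\Mod B$ via the tilting derived equivalence $\bfD^b(\mod A) \simeq \bfD^b(\mod\End_A(X\oplus B)^{\op})$: any $M\in X^\perp$ has $\RHom_A(X,M)=0$, so $M$ corresponds under the equivalence to a module over the ``$B$-corner'' of $\End_A(X\oplus B)^{\op}$, and the identification $\End_A(B) \cong B$ (coming from $f$ being a homological epimorphism) then places $M$ in $\Y$ as required.
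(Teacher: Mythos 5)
Your treatment of $(2)\Leftrightarrow(3)$ is correct and, for $(3)\Rightarrow(2)$, is a nice shortcut: plugging $M=A$ into the explicit sequence of Remark~\ref{re:5seq} directly exhibits $B$ as a term of $\e_A$. The paper instead closes the cycle $(1)\Rightarrow(2)\Rightarrow(3)\Rightarrow(1)$, so it never needs $(3)\Rightarrow(2)$ on its own; but your argument is sound. Your $(1)\Rightarrow(2)$ invokes Bongartz plus Geigle--Lenzing somewhat loosely (the complement $T$ in a tilting module $X\oplus T$ need not lie in $X^\perp$, so $B$ is not simply $\End_A(T)^{\op}$), but the conclusion you need is exactly \cite[Proposition~3.2]{GL}, which the paper cites directly, so this is only a matter of phrasing.

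The genuine gap is in $(2)\Rightarrow(1)$. The Bongartz complement $X$ of the rigid module $B$ sits in a universal exact sequence $0\to A\to X\to B^r\to 0$. Applying $\Hom_A(-,Y)$ for $Y\in\Y$ and using that $\Ext^1_A(B,Y)=\Ext^1_B(B,Y)=0$ gives a surjection $\Hom_A(X,Y)\twoheadrightarrow\Hom_A(A,Y)=Y$. Hence $\Hom_A(X,Y)\neq 0$ for every nonzero $Y\in\Y$; in particular $X\notin{^\perp}\Y=\X$, so $\Y\not\subseteq X^\perp$ and $X^\perp\neq\Y$. (A concrete instance: $A=k\times k$, $B=k$ via the first projection. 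Then $B$ is $A$-projective, the Bongartz complement is $A$ itself, and $A^\perp=0\neq\Y$, whereas the simple module at the second vertex does generate $\Y$.) The paper avoids this by proving $(3)\Rightarrow(1)$ with a different construction: it restricts the pair to $\mod A$, takes an injective cogenerator $Q$ of $\mod A$, and sets $X=X_Q$, i.e.\ the image of $Q$ under the \emph{right} adjoint of the exact inclusion $\X\cap\mod A\hookrightarrow\mod A$ furnished by the $5$-term sequence; this $X$ is by construction an injective cogenerator of $\X\cap\mod A$, hence exceptional, and it lies in $\X$ from the start, which makes $X^\perp=\Y$ accessible. Your proposed tilting derived-equivalence argument for $X^\perp\subseteq\Y$ cannot be repaired with the Bongartz complement as chosen, since the very first inclusion $\Y\subseteq X^\perp$ already fails.
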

\begin{proof}
(1) $\Rightarrow$ (2): This follows, for example, from \cite[Proposition~3.2]{GL}.

(2) $\Rightarrow$ (3): This follows from Remark~\ref{re:5seq}.

(3) $\Rightarrow$ (1): Let $\X_\fp=\X\cap\mod A$ and
$\Y_\fp=\Y\cap\mod A$. The assumption on $(\X,\Y)$ implies that
$(\X_\fp,\Y_\fp)$ is a complete Ext-orthogonal pair for $\mod
A$. Moreover, every object in $\X$ is a filtered colimit of objects in
$\X_\fp$. To see this, we first express $X$ as a filtered colimit
$\varinjlim M_i$ of finitely presented modules. Then, using the forthcoming
Lemma~\ref{le:seq}(2), we see that $\varepsilon_X = \varinjlim \varepsilon_{M_i}$,
from which it easily follows that $X \cong \varinjlim X_{M_i}$.
Now choose an injective cogenerator
$Q$ in $\mod A$ and let $X=X_Q$ be the module from the 5-term exact
sequence $\e_Q$. This module is the image of $Q$ under a right adjoint
of the inclusion $\X_\fp\to\mod A$. Note that a right adjoint of an
exact functor preserves injectivity.  It follows that $X$ is an
exceptional object and that $\X_\fp$ is the smallest extension closed
abelian subcategory of $\mod A$ containing $X$. Thus
$X^\perp=\X_\fp^\perp=\X^\perp=\Y$, using the fact that
$\X = \varinjlim \X_\fp$.
\end{proof}

As a special case, any finitely generated projective module generates an Ext-orthog\-onal
pair that can be described explicitly; see \cite[\S5]{GL}. For cyclic projective modules,
this is discussed in more generality in the following example.

\begin{exm}
Let $A$ be a hereditary ring and $e^2=e\in A$ an idempotent.  Let $\X$
denote the category of $A$-modules $M$ such that the natural map
$Me\otimes_{eAe}eA\to M$ is an isomorphism, and let
$\Y=eA^\perp=\{M\in\Mod A\mid Me=0\}$.  Thus $-\otimes_{eAe} eA$
identifies $\Mod eAe$ with $\X$ and restriction via $A\to A/AeA$
identifies $\Mod A/AeA$ with $\Y$.  Then $(\X,\Y)$ is a complete
Ext-orthogonal pair for $\Mod A$, and for each $A$-module $M$ the
5-term exact sequence $\e_M$ is of the form
$$0\to \Tor_1^A(M,A/AeA)\to Me\otimes_{eAe}eA\to M\to
M\otimes_A A/AeA\to 0 \to 0.$$
\end{exm}

The next example\footnote{The first author is grateful to Lidia
Angeleri H\"ugel for suggesting this example.} arises from the work of
Reiten and Ringel on infinite dimensional representations of canonical
algebras; see \cite{RR} which is our reference for all concepts and
results in the following discussion.  Note that these algebras are not
necessarily hereditary.  The example shows the interplay between
Ext-orthogonal pairs and (co)torsion pairs.

\begin{exm}
Let $A$ be a finite dimensional canonical algebra over a field
$k$. Take for example a tame hereditary algebra, or, more
specifically, the Kronecker algebra $\smatrix{k&k^2\\ 0&k}$. For such
algebras, there is the concept of a \emph{separating tubular family}.
We fix such a family and denote by $\T$ the category of finite
dimensional modules belonging to this family. There is also a
particular \emph{generic module} over $A$ which depends in some cases
on the choice of the tubular family; it is denoted by $G$.  Then the full
subcategory $\X=\li\T$ consisting of all filtered colimits of modules
in $\T$ and the full subcategory $\Y=\Add G$ consisting of all
coproducts of copies of $G$ form an Ext-orthogonal pair $(\X,\Y)$ for
$\Mod A$.  Note that the endomorphism ring $D=\End_A(G)$ of $G$ is a
division ring and that the canonical map $A\to B$ with $B=\End_D(G)$
is a homological epimorphism which induces an equivalence $\Mod
B\xto{\sim}\Y$. In the particular case of the Kronecker algebra
$A = \smatrix{k&k^2\\ 0&k}$, a direct computation shows that
$B = M_2\big(k(x)\big)$.

The category of $A$-modules which are generated by $\T$ and the
category of $A$-modules which are cogenerated by $G$ form a torsion
pair $(\Fac\X,\Sub\Y)$ for $\Mod A$ which equals the torsion pair
$(\X_0,\Y_0)$ generated by $\X$. On the other hand, let $\C$ denote
the category of $A$-modules which are cogenerated by $\X$, and let
$\D$ denote the category of $A$-modules $M$ satisfying
$\Hom_A(M,\T)=0$.  Then the pair $(\C,\D)$ forms a cotorsion pair for
$\Mod A$ which identifies with the cotorsion pair $(\X_1,\Y_1)$
generated by $\X$.

If $A$ is hereditary, then the Ext-orthogonal pair $(\X,\Y)$ is
complete by Corollary~\ref{co:perp}; see also Remark~\ref{re:5seq} for
an explicit description of the 5-term approximation sequence $\e_M$
for each $A$-module $M$. Alternatively, one obtains the sequence
$\e_M$ by splicing together appropriate approximation sequences which
arise from $(\X_0,\Y_0)$ and $(\X_1,\Y_1)$.
\end{exm}

The following example of an Ext-orthogonal pair arises from a
localizing subcategory; it is a specialization of Example~\ref{ex:groth_loc}
and provides a simple (and not necessarily hereditary) model
for the previous example.

\begin{exm} \label{ex:domains}
Let $A$ be an integral domain with quotient field $Q$. Let $\X$ denote
the category of torsion modules and $\Y$ the category of torsion free
divisible modules. Note that the modules in $\Y$ are precisely the
coproducts of copies of $Q$. Then $(\X,\Y)$ is a complete
Ext-orthogonal pair for $\Mod A$, and for each $A$-module $M$ the
5-term exact sequence $\e_M$ is of the form
$$ 0 \to 0\to tM \to M\to
M\otimes_A Q\to \bar M\to 0.$$
\end{exm}

We conclude the section by showing that there are examples of abelian
categories that admit only trivial Ext-orthogonal pairs.  

\begin{exm}
Let $A$ be a local artinian ring and set $\A=\Mod A$. Then
$\Hom_A(X,Y)\neq 0$ for any pair $X,Y$ of non-zero $A$-modules. This is
because the unique (up to isomorphism) simple module $S$ is a submodule
of $Y$ and a factor of $X$.
Thus if $(\X,\Y)$ is an Ext-orthogonal pair for $\A$, then $\X=\A$ or
$\Y=\A$.
\end{exm}

\section{Ext-orthogonal pairs of finite type}\label{se:fin}

At this point, we use the results from \S3 to characterize for hereditary rings
the Ext-orthogonal pairs of \emph{finite type}. Those are, by definition, the
Ext-orthogonal pairs generated by a set of finitely presented modules.

\begin{thm}\label{th:ext}
Let $A$ be a hereditary  ring and $(\X,\Y)$ an
Ext-orthogonal pair for the module category of $A$.  Then the
following are equivalent.
\begin{enumerate}
\item The subcategory $\Y$ is closed under taking coproducts.
\item Every module in $\X$ is a filtered colimit of
finitely presented modules from $\X$.
\item There exists a category $\C$ of finitely presented modules such that
$\C^\perp=\Y$.
\end{enumerate}
\end{thm}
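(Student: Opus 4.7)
The plan is to prove the cycle $(3)\Rightarrow(1)\Rightarrow(2)\Rightarrow(3)$, with the substantive content lying in $(1)\Rightarrow(2)$. For $(3)\Rightarrow(1)$: over a hereditary ring every finitely presented module $C$ admits a projective resolution $0 \to P_1 \to P_0 \to C \to 0$ with $P_0,P_1$ finitely generated projective---surjecting $P_0$ onto $C$, the kernel is projective by heredity and finitely generated since $C$ is finitely presented---so both $\Hom_A(C,-)$ and $\Ext^1_A(C,-)$ factor through $\Hom$ out of a finitely generated projective and commute with coproducts, whence $\C^\perp$ is closed under coproducts. For $(2)\Rightarrow(3)$, I would take $\C=\X\cap\mod A$; the inclusion $\C^\perp\supseteq\X^\perp=\Y$ is immediate, and for the reverse, given $Y\in\C^\perp$ and $X\in\X$, write $X=\varinjlim N_i$ with $N_i\in\C$ using $(2)$. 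Then $\Hom_A(X,Y)=\varprojlim\Hom_A(N_i,Y)=0$, and the Milnor short exact sequence
\[
0 \to \varprojlim{}^1\Hom_A(N_i,Y) \to \Ext^1_A(X,Y) \to \varprojlim\Ext^1_A(N_i,Y) \to 0
\]
forces $\Ext^1_A(X,Y)=0$ since both outer terms vanish.

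For $(1)\Rightarrow(2)$, Corollary~\ref{co:perp} furnishes completeness of $(\X,\Y)$, and the 5-term sequences $\e_M$ depend functorially on $M$ by Lemma~\ref{le:exact}(2). Note that $\X$ is closed under coproducts (from $\Hom_A(\bigoplus X_i,-) = \prod\Hom_A(X_i,-)$ and the analogous identity for $\Ext^1_A$ via a projective resolution) and hence, as an abelian subcategory closed under coproducts, under filtered colimits; likewise for $\Y$. Fix $X\in\X$ and a presentation $X=\varinjlim M_i$ with $M_i\in\mod A$. Filtered-colimit exactness together with the uniqueness clause of Lemma~\ref{le:exact}(3) yields $\varinjlim\e_{M_i}=\e_X$, which is the trivial sequence $(0\to 0\to X=X\to 0\to 0)$ because $X\in\X$, so $X\cong\varinjlim X_{M_i}$ with each $X_{M_i}\in\X$.

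The main obstacle is that the $X_{M_i}$ need not be finitely presented, so this argument only shows $X$ is a filtered colimit of objects in $\X$. To finish, I would exhibit each $X_{M_i}$ as a filtered colimit of finitely presented modules inside $\X$, and assemble the resulting iterated colimit. Here I would use the homological epimorphism $A\to B$ from Proposition~\ref{pr:hom-epi2} and the description in Remark~\ref{re:5seq}: $X_{M_i}$ fits in a short exact sequence $0\to\Tor_1^A(M_i,B)\to X_{M_i}\to M'_i\to 0$ with $M'_i$ a submodule of $M_i$. The vanishing $X\otimes_A^{\bfL}B=0$ translates into $\varinjlim_i M_i\otimes_A B=0$ and $\varinjlim_i\Tor_1^A(M_i,B)=0$, so one can pass to a cofinal subsystem in which these obstruction terms collapse piece by piece. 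The delicate point is that $\Tor_1^A(M_i,B)$ need not be finitely generated as a $B$-module, forcing a decomposition into its finitely generated submodules and an inductive bookkeeping; this is the step I expect to require the most care.
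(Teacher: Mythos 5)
Your cycle $(3)\Rightarrow(1)\Rightarrow(2)\Rightarrow(3)$ matches the paper's organization, and $(3)\Rightarrow(1)$ is fine. But there are two problems, one minor and one substantial.

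Minor: in $(2)\Rightarrow(3)$ you invoke a Milnor $\varprojlim{}^1$ exact sequence after writing $X=\varinjlim N_i$, but that sequence only exists for colimits indexed by $\bbN$; an arbitrary filtered colimit of finitely presented modules has no such control. The paper instead observes directly that for each module $Y$ the subcategory ${^\perp}Y$ is closed under coproducts and cokernels, hence under all colimits, so $\X=\li\X_\fp\subseteq{^\perp}Y$ whenever $Y\in\X_\fp^\perp$. This closes $(2)\Rightarrow(3)$ with no countability assumption.

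Substantial: in $(1)\Rightarrow(2)$ you correctly reduce to showing each $X_{M_i}$ is a filtered colimit of finitely presented modules in $\X$, then stall exactly where you expect to. The route you sketch---using $\varinjlim\Tor_1^A(M_i,B)=0$ and hoping to ``pass to a cofinal subsystem where the obstruction collapses piece by piece''---does not work as stated: vanishing of a filtered colimit of Tor groups does not let you find a cofinal subsystem in which each $\Tor_1^A(M_i,B)$ vanishes (the nonzero elements can persist at every stage and only die in the colimit). The missing idea is to choose the filtered presentation of $X$ more carefully at the outset. The paper's Lemma~\ref{le:ext1perp} (a variant of \cite[Prop.~2.1]{BET}) shows that because $X$ has projective dimension at most $1$ and $\Ext^1_A(X,\Y)=0$, one can write $X=\li M_i$ with each $M_i$ finitely presented \emph{and} $\Ext^1_A(M_i,\Y)=0$. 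Then Lemma~\ref{le:seq}(1) forces $Y_{M_i}=0$, so $X_{M_i}$ is a \emph{submodule} of the finitely presented $M_i$, and Lemma~\ref{le:subfp} (Cohn/Albrecht: over a hereditary ring any submodule of a finitely presented module is a direct sum of finitely presented modules) finishes. Without that careful initial choice, you have no handle on $X_{M_i}$, and your proof is genuinely incomplete at the step you flagged.
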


We need some preparations for the proof of this result. The first
lemma is a slight modification of \cite[Proposition~2.1]{BET}.

\begin{lem}\label{le:ext1perp}
Let $A$ be a ring and $\Y$ a full subcategory of its module
category. Denote by $\X$ the category  of $A$-modules $X$ of projective
dimension at most $1$ satisfying $\Ext^1_A(X,Y)=0$ for all
$Y\in\Y$. Then any module in $\X$ is a filtered colimit of finitely
presented modules from $\X$.
\end{lem}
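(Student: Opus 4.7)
Fix $X\in\X$ and a projective resolution
$$0\to K\xrightarrow{\iota} F\xrightarrow{\pi}X\to 0$$
with $F$ free and, since $\pd_A X\leq 1$, with $K$ projective. The strategy is to realize $X$ as a filtered colimit of finitely presented quotients $F_\lambda/K_\lambda$, where $F_\lambda\subseteq F$ is a finitely generated free submodule and $K_\lambda\subseteq K\cap F_\lambda$ is a finitely generated projective submodule chosen carefully so that the quotient actually lies in $\X$.

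The first part of the argument is a ``lift and enlarge'' construction. Given any finitely presented module $M$ with a map $f\colon M\to X$, choose a finite free presentation $F_1^M\xrightarrow{d^M} F_0^M\to M\to 0$ and lift $f$ to a chain map into $\iota\colon K\to F$, obtaining $g_0\colon F_0^M\to F$ and $g_1\colon F_1^M\to K$ with $\iota g_1 = g_0 d^M$. Since $F$ is free and the image $g_0(F_0^M)$ is finitely generated, I can find a finitely generated free submodule $F_\lambda\subseteq F$ containing $g_0(F_0^M)$. Similarly I choose a finitely generated submodule $K_\lambda\subseteq K\cap F_\lambda$ (after enlarging $F_\lambda$ if necessary) containing $g_1(F_1^M)$. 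By construction $f$ factors as $M\to F_\lambda/K_\lambda\to X$.

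The critical step is to arrange that the finitely presented module $X_\lambda:=F_\lambda/K_\lambda$ actually belongs to $\X$. This requires (a) $\pd_A X_\lambda\leq 1$ and (b) $\Ext^1_A(X_\lambda,Y)=0$ for every $Y\in\Y$. Both will follow if $K_\lambda$ can be chosen as a \emph{finitely generated projective direct summand} of $K$: for (a) the presentation $0\to K_\lambda\to F_\lambda\to X_\lambda\to 0$ then has projective kernel, and for (b) any $\varphi\colon K_\lambda\to Y$ can be composed with a projection $K\twoheadrightarrow K_\lambda$ to obtain $\tilde\varphi\colon K\to Y$; the hypothesis $\Ext^1_A(X,Y)=0$ lifts $\tilde\varphi$ to $\psi\colon F\to Y$, and the restriction $\psi|_{F_\lambda}$ extends $\varphi$, witnessing that the extension $0\to Y\to E\to X_\lambda\to 0$ splits.

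Once the above is available, a cofinality argument finishes the proof: any morphism from a finitely presented module into $X$ factors through some $X_\lambda$, and the system $\{X_\lambda\}$ formed by successive enlargements is filtered with colimit $X$, because every finitely generated submodule of $X$ lies in the image of some $F_\lambda/K_\lambda\to X$.

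\textbf{The main obstacle} is the summand-enlargement step: producing, inside the projective module $K$, a finitely generated projective direct summand containing any prescribed finitely generated submodule. Kaplansky's theorem reduces the problem to the countably generated case, where one argues by a stepwise construction, alternately enlarging the summand to absorb a given finite set of elements of $K$ while maintaining a compatible splitting. This is the technical core of the argument; the diagram chases and cofinality manipulations used above are then routine.
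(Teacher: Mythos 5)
Your strategy is essentially the paper's, but you have placed the freeness on the wrong side of the presentation, and this creates a genuine gap exactly at the step you flag as the ``main obstacle.''

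You fix a short exact sequence $0\to K\to F\to X\to 0$ with $F$ free and the syzygy $K$ merely projective, and your argument then hinges on being able to enlarge any finitely generated submodule of $K$ to a \emph{finitely generated projective direct summand} of $K$. This is false for a general projective module over a general ring: there exist projective modules that are not direct sums of finitely generated submodules, and for such a module a finitely generated submodule need not be contained in any finitely generated direct summand. The appeal to Kaplansky's theorem does not rescue this: Kaplansky's stepwise absorption argument produces \emph{countably generated} direct summands (it requires $\omega$ enlargement steps to stabilize the splitting), not finitely generated ones, so it cannot manufacture the finitely presented quotients $F_\lambda/K_\lambda$ that you need.

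The paper avoids this entirely by choosing the resolution the other way around: $0\to P\xrightarrow{\phi} Q\to X\to 0$ with the \emph{syzygy} $P$ free and $Q$ projective. (This is always arrangeable via the Eilenberg swindle: if $P_0$ is projective with $P_0\oplus P_0'$ free, add the free module $G=(P_0\oplus P_0')^{(\mathbb N)}$ to both $P_0$ and $Q_0$.) For a free module $P$ with a chosen basis, any finitely generated submodule lies inside the span of finitely many basis elements, which is automatically a finitely generated \emph{free} direct summand. That gives the cofinal system of split monomorphisms $P_i\hookrightarrow P$ immediately, and then the factoring argument you sketch for $\Ext^1_A(X_i,\mathcal Y)=0$ goes through verbatim: extend $\mu\colon P_i\to Y$ along the splitting to $P\to Y$, factor through $\phi$ using $\Ext^1_A(X,Y)=0$, and restrict along $Q_i\to Q$. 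In short: your diagram chase and cofinality manipulations are fine, but the summand-enlargement claim for a projective $K$ is unjustified (and unjustifiable in general); replace ``$F$ free, $K$ projective'' by ``$K$ free, $F$ projective'' and the obstacle evaporates.
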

\begin{proof}
Let $X\in\X$.  Choose an exact sequence $0\to P\xto{\p} Q\to X\to 0$
such that $P$ is free and $Q$ is projective.  Note that
$\Ext^1_A(X,Y)=0$ implies that every morphism $P\to
Y$ factors through $\p$. The commuting diagrams of $A$-module
morphisms
$$\xymatrix{0\ar[r]&P_i\ar[r]^{\p_i}\ar[d]&Q_i\ar[d]\ar[r]&X_i\ar[d]\ar[r]&0\\
0\ar[r]&P\ar[r]^\p&Q\ar[r]&X\ar[r]&0 }$$ with $P_i$ and $Q_i$ finitely
generated projective form a filtered system of exact sequences such
that $\li\p_i=\p$. Note that $P$ is a filtered colimit of its finitely
generated direct summands since $P$ is free. Thus there is a cofinal
subsystem such that each morphism $P_i\to P$ is a split monomorphism.
Therefore we may without loss of generality assume that each morphism
$P_i\to P$ is a split monomorphism.

Clearly $\li X_i=X$, and it remains to prove that $\Ext_A^1(X_i,\Y)=0$
for all $i$. This is equivalent to showing that each morphism
$\mu\colon P_i\to Y$ with $Y\in\Y$ factors through $\p_i$. For this,
we first factor each such $\mu$ through the split monomorphism $P_i
\to P$, then through $\p$, and finally compose the morphism $Q \to Y$
which we have obtained with the morphism $Q_i \to Q$. The result is a
morphism $\nu\colon Q_i \to Y$ such that $\nu \p_i = \mu$, as desired.
\end{proof}

The second lemma establishes some necessary properties of the 5-term sequences.

\begin{lem}\label{le:seq}
Let $A$ be a hereditary ring and $(\X,\Y)$ a complete Ext-orthogonal
pair for $\Mod A$.  Let $M$ be an $A$-module and $\e_M$ the
corresponding 5-term exact sequence.
\begin{enumerate}
\item If  $\Ext_A^1(M,\Y)=0$, then $Y_M=0$.
\item Suppose that $\Y$ is closed under coproducts and let $M=\li M_i$
be a filtered colimit of $A$-modules $M_i$. Then $\e_M=\li\e_{M_i}$.
\end{enumerate}
\end{lem}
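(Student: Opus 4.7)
The plan for part (1) is to trace $Y_M$ through the three short exact sequences $\a_M$, $\b_M$, $\g_M$ into which $\e_M$ decomposes in \S2. Applying $\Hom_A(-,Y_M)$ to $\a_M\colon 0\to M'\to M\to M''\to 0$ and using that $A$ is hereditary (so $\Ext^2_A(M'',-)=0$) together with the hypothesis $\Ext^1_A(M,\Y)=0$, I obtain $\Ext^1_A(M',Y_M)=0$. Applying $\Hom_A(-,Y_M)$ to $\b_M\colon 0\to Y_M\to X_M\to M'\to 0$, the vanishing $\Hom_A(X_M,Y_M)=0$ (from $X_M\in\X$ and $Y_M\in\Y$) then forces $\Hom_A(Y_M,Y_M)=0$, whence $Y_M=0$.

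For part (2), the key preliminary step is to verify that both $\X$ and $\Y$ are closed under filtered colimits. The subcategory $\Y$ is closed under coproducts by hypothesis, and $\X={^\perp}\Y$ is closed under coproducts because $\Ext^*_A$ turns coproducts in the first argument into products. Since in $\Mod A$ a filtered colimit is a cokernel between two coproducts, and both $\X$ and $\Y$ are abelian subcategories of $\Mod A$ with exact inclusion, each inherits filtered colimits from $\Mod A$. Once this is in place, Lemma~\ref{le:exact}(2) lets me assemble the family $\{\e_{M_i}\}$ into a filtered diagram via unique lifts of the transition maps, and provides canonical morphisms $\e_{M_i}\to\e_M$ compatible with $M_i\to M$. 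Taking $\li$—which is exact in the Grothendieck category $\Mod A$—produces a 5-term exact sequence with middle term $\li M_i=M$ and end terms lying in $\X$ and $\Y$, which Lemma~\ref{le:exact}(3) identifies uniquely with $\e_M$; the identification is realized by the natural morphism $\li\e_{M_i}\to\e_M$ constructed above.

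The main potential obstacle I anticipate is the closure of $\X$ under filtered colimits; everything else is routine manipulation of the three short exact sequences extracted from $\e_M$ and of the uniqueness clauses in Lemma~\ref{le:exact}.
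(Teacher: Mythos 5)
Your proof is correct. For part~(2) you follow essentially the same route as the paper: observe that both $\X$ and $\Y$ are closed under filtered colimits (you helpfully spell out why: $\Y$ is closed under coproducts by hypothesis, $\X={}^\perp\Y$ is always closed under coproducts since $\Ext^*_A(\coprod X_i,Y)\cong\prod\Ext^*_A(X_i,Y)$, and both are abelian subcategories of $\Mod A$ with exact inclusion because $A$ is hereditary, hence closed under cokernels), use Lemma~\ref{le:exact}(2) to make $\{\e_{M_i}\}$ into a filtered system, pass to the colimit using exactness of $\li$, and invoke Lemma~\ref{le:exact}(3).

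For part~(1) you take a slightly different and somewhat more elementary path. The paper shows that the image $M'$ of $X_M\to M$ actually lies in $\X$ (which requires checking both $\Hom_A(M',\Y)=0$ and $\Ext^1_A(M',\Y)=0$) and then appeals to the uniqueness clause of Lemma~\ref{le:exact}(3): the sequence $0\to 0\to M'\to M\to Y^M\to X^M\to 0$ is another valid $\e_M$, forcing $Y_M=0$. You instead extract only the weaker fact $\Ext^1_A(M',Y_M)=0$ from the long exact sequence of $\a_M$ (using heredity and $\Ext^1_A(M,Y_M)=0$), feed it into the long exact sequence of $\b_M$ together with $\Hom_A(X_M,Y_M)=0$, and conclude $\Hom_A(Y_M,Y_M)=0$, whence $\mathrm{id}_{Y_M}=0$. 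This avoids the uniqueness lemma entirely and is a perfectly good diagram chase; the paper's version has the small advantage of producing the conceptually clean statement $M'\in\X$, which is reused implicitly elsewhere.
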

\begin{proof}
We use the uniqueness of the 5-term exact sequences guaranteed by Lemma~\ref{le:exact}.
If  $\Ext_A^1(M,\Y)=0$, then the image of
the morphism $X_M\to M$ belongs to $\X$. Thus $X_M\to M$ is a monomorphism since $\e_M$
is unique, and this yields (1).

To prove (2), one uses that $\X$ and $\Y$ are closed under taking
colimits and that taking filtered colimits is exact. Thus
$\li\e_{M_i}$ is an exact sequence with middle term $M$ and all other
terms in $\X$ or $\Y$. Now the uniqueness of $\e_M$ implies that
$\e_M=\li\e_{M_i}$.
\end{proof}

Finally, the following lemma is needed for hereditary rings which are not noetherian.

\begin{lem}\label{le:subfp}
Let $M$ be a finitely presented module over a hereditary ring and
$N\subseteq M$ any submodule. Then $N$ is a direct sum of finitely
presented modules.
\end{lem}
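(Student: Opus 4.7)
The plan is to lift $N$ through a projective presentation of $M$ and then decompose the resulting projective submodule using a classical theorem of Kaplansky. Since $M$ is finitely presented and $A$ is hereditary, I would first pick a short exact sequence $0 \to K \to P \to M \to 0$ with $P$ finitely generated free and $K$ finitely generated; the hereditary property then forces $K$ to be projective, hence finitely generated projective.

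Next, let $\pi\colon P \to M$ denote the surjection and set $N' = \pi^{-1}(N)$. As a submodule of the projective module $P$ over a hereditary ring, $N'$ is itself projective. At this point I invoke the classical theorem that over a right hereditary ring every projective module is a direct sum of finitely generated projective modules, giving a decomposition $N' = \bigoplus_{\alpha \in I} Q_\alpha$ with each $Q_\alpha$ finitely generated projective.

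Because $K$ is finitely generated and the sum defining $N'$ is direct, there exists a finite subset $F \subseteq I$ with $K \subseteq Q := \bigoplus_{\alpha \in F} Q_\alpha$. This yields
\[
N \;\cong\; N'/K \;\cong\; (Q/K) \;\oplus\; \bigoplus_{\alpha \notin F} Q_\alpha,
\]
which exhibits $N$ as a direct sum of finitely presented modules: $Q/K$ is finitely presented as the quotient of a finitely generated module by a finitely generated submodule, and every $Q_\alpha$ with $\alpha \notin F$ is finitely generated projective, hence finitely presented.

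The main obstacle, and the only non-elementary input, is Kaplansky's decomposition theorem for projective modules over a hereditary ring; the remainder is bookkeeping that uses only the finite generation of $K$ and the propagation of projectivity from $P$ to $N'$. It is precisely this ingredient that makes the statement nontrivial in the non-noetherian setting, since $N'$ itself need not be finitely generated.
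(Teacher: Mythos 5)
Your argument is correct and proves the lemma. It follows the same two-step strategy as the paper, which cites Cohn's result that over a hereditary ring a submodule of a finitely presented module is a direct sum of a finitely presented module and a projective module, and then applies Albrecht's theorem that over a hereditary ring every projective module is a direct sum of finitely generated projectives. What you do differently is re-derive Cohn's decomposition from scratch: pull $N$ back to $N' = \pi^{-1}(N) \subseteq P$, note that $N'$ is projective, apply the decomposition theorem to write $N' = \bigoplus_\alpha Q_\alpha$, and use the finite generation of $K$ to split off a finitely generated summand $Q$ containing $K$, giving $N \cong Q/K \oplus \bigoplus_{\alpha \notin F} Q_\alpha$. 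This makes the proof self-contained modulo the decomposition theorem, which is a genuine (if modest) gain over the paper's bare citation. One correction: the decomposition theorem you invoke is due to Albrecht, not Kaplansky. Kaplansky's theorem only gives that an arbitrary projective module is a direct sum of \emph{countably} generated modules; the finer statement that over a (semi-)hereditary ring projectives decompose into \emph{finitely} generated projectives is Albrecht's, and that refinement is exactly what the argument requires, since a countably generated summand need not be finitely presented.
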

\begin{proof}
We combine two results. Over a hereditary ring, any submodule of a
finitely presented module is a direct sum of a finitely presented
module and a projective module; see \cite[Theorem~5.1.6]{C}. In
addition, one uses that any projective module is a direct sum of
finitely generated projective modules; see \cite{Al}.
\end{proof}

\begin{proof}[Proof of Theorem~\ref{th:ext}]
(1) $\Rightarrow$ (2): Suppose that $\Y$ is closed under taking
coproducts.  We apply Corollary~\ref{co:perp} to obtain for each
module $M$ the natural exact sequence $\e_M$. Here note that we a priori
did not assume completeness of $(\X,\Y)$. Now suppose that
$M$ belongs to $\X$. Then one can write $M=\li M_i$ as a filtered colimit
of finitely presented modules with $\Ext_A^1(M_i,\Y)=0$ for all $i$;
see Lemma~\ref{le:ext1perp}.  Next we apply Lemma~\ref{le:seq}. Thus
$$\li X_{M_i}\xto{\sim}X_M\xto{\sim}M,$$ and each $X_{M_i}$ is a
submodule of the finitely presented module $M_i$. Finally, each
$X_{M_i}$ is a filtered colimit of finitely presented direct summands
by Lemma~\ref{le:subfp}. Thus $M$ is a filtered colimit of finitely
presented modules from $\X$.

(2) $\Rightarrow$ (3): Let $\X_\fp$ denote the full subcategory that
is formed by all finitely presented modules in $\X$.  Observe that
$^\perp Y$ is closed under taking colimits for each module $Y$,
because $^\perp Y$ is closed under taking coproducts and cokernels.
Thus $\X_\fp^\perp=\X^\perp=\Y$ provided that $\X=\li\X_\fp$.

(3) $\Rightarrow$ (1): Use that for each finitely presented $A$-module $X$,
the functor $\Ext_A^*(X,-)$ preserves all coproducts.
\end{proof}

Note that Theorem~\ref{th:ext} gives rise to a bijection between extension closed
abelian subcategories of finitely presented modules and Ext-orthogonal
pairs of finite type. We will state this explicitly in \S8, but we in fact
prove it here by the following proposition.

\begin{prop}
\label{pr:ext}
Let $A$ be a hereditary ring and $\C$ a category of finitely presented
$A$-modules. Then $^\perp(\C^\perp)\cap\mod A$ equals the smallest
extension closed abelian subcategory of $\mod A$ containing $\C$.
\end{prop}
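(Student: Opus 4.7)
The plan is to prove the proposition in two parts. First, the easy inclusion $\D \subseteq {^\perp}(\C^\perp) \cap \mod A$: setting $\X={^\perp}(\C^\perp)$, I note that $\X$ is an extension closed abelian subcategory of $\Mod A$ by the observation at the start of \S\ref{se:ext}. Since $A$ is hereditary, it is right coherent (by Lemma~\ref{le:subfp}, finitely generated submodules of finitely presented modules are again finitely presented), so $\mod A$ is itself an abelian subcategory of $\Mod A$. Consequently $\X \cap \mod A$ is an extension closed abelian subcategory of $\mod A$ containing $\C$, and the minimality of $\D$ gives the inclusion.

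For the converse inclusion $\X\cap\mod A\subseteq\D$, the plan is to pass to the derived category and invoke Neeman's compact generation theorem. By Proposition~\ref{pr:thick-corr}, $\bfD_\X(\Mod A)$ equals the smallest localizing subcategory $\operatorname{Loc}(\C)$ of $\bfD(\Mod A)$ containing $\C$; by Remark~\ref{re:thick} applied to the hereditary abelian category $\mod A$, the subcategory $\bfD^b_\D(\mod A):=\{X\in\bfD^b(\mod A)\mid H^nX\in\D\text{ for all }n\}$ equals the smallest thick subcategory $\operatorname{Thick}(\C)$ of $\bfD^b(\mod A)$ containing $\C$. Because $A$ is hereditary, every $C\in\C$ admits a finitely generated projective resolution of length at most one and is therefore a compact (perfect) object of $\bfD(\Mod A)$; moreover, the compact objects $\bfD^c(\Mod A)$ coincide, up to quasi-isomorphism, with $\bfD^b(\mod A)$. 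Neeman's theorem then yields
\[
\bfD_\X(\Mod A)\cap\bfD^b(\mod A)=\operatorname{Loc}(\C)\cap\bfD^c(\Mod A)=\operatorname{Thick}(\C)=\bfD^b_\D(\mod A).
\]
For any $M\in\X\cap\mod A$, viewing $M$ as a complex concentrated in degree zero places it in the left hand side, hence in $\bfD^b_\D(\mod A)$; since $M$ has cohomology only in degree zero, this forces $M\in\D$.

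The main obstacle is the invocation of Neeman's theorem — the assertion that in a compactly generated triangulated category the compact objects of the localizing subcategory generated by a set of compact objects form precisely its thick closure. This is the only external input to the proof; all remaining ingredients are direct consequences of the bijections already established in the paper (Proposition~\ref{pr:thick-corr} and Remark~\ref{re:thick}) combined with the structural consequences of the hereditary hypothesis on $A$.
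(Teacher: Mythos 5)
Your proof is correct, and it takes a genuinely different route from the paper's. The paper works directly at the level of module categories: writing $\D$ for the smallest extension closed abelian subcategory of $\mod A$ containing $\C$, it shows that the full subcategory $\li\D$ of all filtered colimits of objects of $\D$ is itself an extension closed abelian subcategory of $\Mod A$ (the delicate step is closure under extensions, handled via Lenzing's criterion for a morphism from a finitely presented module into a filtered colimit to factor through an object of the diagram). Theorem~\ref{th:perpX} then gives $\X := {^\perp(\C^\perp)} = \li\D$, and $\X\cap\mod A = \D$ follows because a finitely presented object of $\li\D$ is a retract of an object of $\D$. Your argument instead passes to the derived category and invokes the Thomason--Neeman theorem that $\operatorname{Loc}(\C)\cap\bfD^c(\Mod A)$ is the thick closure of $\C$ inside the compacts, combined with Proposition~\ref{pr:thick-corr}, Remark~\ref{re:thick}, and the identification $\bfD^c(\Mod A)\simeq\bfD^b(\mod A)$ (valid because a hereditary ring is coherent and every finitely presented module is perfect). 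This is slicker but imports a substantial external result; it also silently uses Theorem~\ref{th:perpX} in the step $\bfD_\X(\Mod A)=\operatorname{Loc}(\C)$, since Proposition~\ref{pr:thick-corr} alone does not say that $\X$ is the smallest extension closed abelian subcategory of $\Mod A$ closed under coproducts and containing $\C$. The paper's approach is more self-contained and carries a useful by-product, namely the explicit description $\X=\li\D$, which exhibits every object of $\X$ as a filtered colimit of finitely presented objects of $\X$.
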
 
\begin{proof}
Let $\D$ denote the smallest extension closed abelian subcategory of
$\mod A$ containing $\C$. We claim that the category $\li\D$ which is
formed by all filtered colimits of modules in $\D$ is an extension
closed abelian subcategory of $\Mod A$. 

Assume for the moment that the claim holds. Then Theorem~\ref{th:perpX}
implies that $\X={^\perp(\C^\perp)}$ equals the smallest extension closed abelian
subcategory of $\Mod A$ closed under coproducts and containing $\C$.
Our claim then implies $\X=\li\D$, so $\X\cap\mod A=\D$ and we are finished.

Therefore, it only remains to prove the claim. First observe that every
morphism in $\li\D$ can be
written as a filtered colimit of morphisms in $\D$. Using that taking
filtered colimits is exact, it follows immediately that $\li\D$ is
closed under kernels and cokernels in $\Mod A$. 

It remains to show that $\li\D$ is closed under extensions. To this
end let $\eta\colon 0\to L\to M\to N\to 0$ be an exact sequence with $L$ and $N$
in $\li\D$.  We can without loss of generality assume that $N$ belongs
to $\D$, because otherwise the sequence $\eta$ is a filtered colimit of the pull-back
exact sequences with the last terms in $\D$. Next we choose a morphism
$\p\colon M'\to M$ with $M'$ finitely presented. All we need to do now is to show that
$\p$ factors through an object in $\D$; see \cite{L}. We may, moreover, assume that the
composite of $\p$ with $M\to N$ is an epimorphism. This is because otherwise
we can take an epimorphism $P \to N$ with $P$ finitely generated projective, factor
it through $M \to N$, and
replace $\p$ by $\p'\colon M' \oplus P \to M$. Finally, denote by $L'$ the
kernel of $\p$, which is necessarily a finitely presented module.
The induced map $L'\to L$ then factors through an object $L''$
in $\D$ since $L$ belongs to $\li\D$.
Forming the push-out exact sequence of $0\to L'\to M'\to N\to 0$ along
the morphism $L'\to L''$ gives an exact sequence $0\to L''\to M''\to
N\to 0$. Now $\p$ factors through $M''$ which belongs to $\D$.
\end{proof}

\section{Universal localizations}
\label{se:uni-loc}

A ring homomorphism $A\to B$ is called a \emph{universal localization}
if there exists a set $\Si$ of morphisms between finitely generated
projective $A$-modules such that
\begin{enumerate}
\item $\s\otimes_AB$ is an isomorphism of $B$-modules for all $\s\in\Si$, and
\item every ring homomorphism $A\to B'$ such that $\s\otimes_AB'$ is
an isomorphism of $B$-modules for all $\s\in\Si$ factors uniquely
through $A\to B$.
\end{enumerate}

Let $A$ be a ring and $\Si$ a set of morphisms between finitely
generated projective $A$-modules. Then there exists a universal
localization inverting $\Si$ and this is unique up to a unique
isomorphism; see \cite{S} for details. The universal localization is
denoted by $A\to A_\Si$ and restriction identifies $\Mod A_\Si$ with
the full subcategory consisting of all $A$-modules $M$ such that
$\Hom_A(\s,M)$ is an isomorphism for all $\s\in\Si$. Note that
$\Hom_A(\s,M)$ is an isomorphism if and only if $M$ belongs to
$\{\Ker\s,\Coker\s\}^\perp$, provided that $A$ is hereditary.
The main result of this section is then the following theorem.

\begin{thm}\label{th:epi}
Let $A$ be a hereditary  ring. A ring homomorphism $f\colon
A\to B$ is a homological epimorphism if and only if $f$ is a universal
localization.
\end{thm}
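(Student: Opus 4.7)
The plan is to prove both implications by reducing them to the Gabriel--de la Pe\~na classification of ring epimorphisms $A \to B$ by the essential image $\Y \subseteq \Mod A$ of the restriction functor (Theorem~1.2 of~\cite{GP}).

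For the \emph{homological epimorphism $\Rightarrow$ universal localization} direction, suppose $f\colon A \to B$ is a homological epimorphism. By Proposition~\ref{pr:hom-epi1}, $\Y$ is the right half of an Ext-orthogonal pair closed under coproducts, so Theorem~\ref{th:ext} yields a family $\C$ of finitely presented $A$-modules with $\Y = \C^\perp$. Each $C \in \C$ admits a resolution $0 \to P_1 \xto{\sigma_C} P_0 \to C \to 0$ with $P_0, P_1$ finitely generated projective, since the first syzygy of a finitely presented module over a hereditary ring is itself finitely generated projective. A short computation with the long exact $\Hom$-sequence (and $\Ext^1_A(P_0,-) = 0$) shows that $\Hom_A(\sigma_C, M)$ is an isomorphism precisely when $M \in C^\perp$. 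Setting $\Sigma = \{\sigma_C \mid C \in \C\}$ we obtain $\Sigma^\perp = \Y$, so restriction identifies $\Mod A_\Sigma$ with $\Y \simeq \Mod B$. Since $A \to A_\Sigma$ and $A \to B$ are both ring epimorphisms with the same essential image, Gabriel--de la Pe\~na supplies an isomorphism $A_\Sigma \cong B$ over $A$.

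For the converse direction, suppose $A \to A_\Sigma$ is a universal localization. First I would verify that $\Y = \Sigma^\perp$ is an extension closed abelian subcategory of $\Mod A$ closed under products and coproducts. Closure under (co)products is immediate because each $\sigma \in \Sigma$ has finitely generated projective source and target, so $\Hom_A(\sigma, -)$ commutes with both products and coproducts. Extension closure and closure under kernels and cokernels follow from the long exact $\Hom$- and $\Ext^1$-sequences combined with projectivity of the source and target of each $\sigma$, applying the five-lemma; equivalently, one uses that restriction along the ring epimorphism $A \to A_\Sigma$ is exact and fully faithful. Proposition~\ref{pr:hom-epi2} now produces a \emph{homological} epimorphism $A \to B$ with $\Mod B \simeq \Y$, and Gabriel--de la Pe\~na again forces $A_\Sigma \cong B$ as $A$-algebras, so $A \to A_\Sigma$ is a homological epimorphism.

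The main obstacle will be the first direction: the argument depends critically on Theorem~\ref{th:ext} to replace the potentially unwieldy kernel and cokernel of $f$ by a family of finitely presented modules admitting a projective resolution by finitely generated projectives. Without this finite-type reduction, there is no way to manufacture a set $\Sigma$ of morphisms between finitely generated projectives. Once the reduction is available, both implications collapse into Gabriel--de la Pe\~na together with elementary homological bookkeeping.
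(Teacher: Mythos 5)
Your proof is correct and for the forward direction is essentially identical to the paper's: invoke Proposition~\ref{pr:hom-epi1} to realize $\Mod B$ as the right half $\Y$ of an Ext-orthogonal pair, use Theorem~\ref{th:ext} to replace the generator $\Ker f \oplus \Coker f$ by a set of finitely presented generators, take projective presentations of these to build $\Sigma$, and then invoke Gabriel--de la Pe\~na uniqueness of ring epimorphisms to conclude $B \cong A_\Sigma$ over $A$. Your observation that the first syzygy of a finitely presented module over a hereditary ring is finitely generated projective is exactly what the paper uses, and the computation identifying $\{M : \Hom_A(\sigma_C,M)\text{ iso}\}$ with $C^\perp$ is the right one.

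For the converse you take a slightly different but equally valid route. The paper argues directly: restriction gives a fully faithful exact embedding $\Mod A_\Sigma \hookrightarrow \Mod A$ with extension-closed image (because, over a hereditary ring, the image is $\bigcap_\sigma \{\Ker\sigma,\Coker\sigma\}^\perp$, a perp class, hence automatically extension-closed abelian), which yields the Ext-isomorphisms in degrees $0$ and $1$; hereditariness of $A$ then forces hereditariness of $A_\Sigma$ and everything above degree $1$ vanishes. You instead verify that $\Y = \Sigma^\perp$ satisfies all of the hypotheses of Proposition~\ref{pr:hom-epi2} (including closure under products, which the paper's direct argument does not need) and then apply that proposition together with Gabriel--de la Pe\~na uniqueness. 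This works, but your ``equivalently, one uses that restriction along the ring epimorphism $A\to A_\Sigma$ is exact and fully faithful'' is misleading: exactness and full faithfulness of restriction along a ring epimorphism give closure under kernels and cokernels but do \emph{not} by themselves give extension closure -- that is precisely the extra content of the $\Tor_1$-vanishing in the definition of a homological epimorphism, and over a non-hereditary ring the image of a universal localization need not be extension closed. Your five-lemma argument is the one that actually carries the weight; keep it and drop the ``equivalently.''
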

\begin{proof}
Suppose first that $f\colon A\to B$ is a homological epimorphism. This
gives rise to an Ext-orthogonal pair $(\X,\Y)$ for $\Mod A$, if we
identify $\Mod B$ with a full subcategory $\Y$ of $\Mod A$; see
Proposition~\ref{pr:hom-epi1}. Let $\X_\fp$ denote the full subcategory
that is formed by all finitely presented modules in $\X$.  It follows
from Theorem~\ref{th:ext} that $\X_\fp^\perp=\Y$. Now fix for each
$X\in\X_\fp$ an exact sequence $$0\to P_X\xto{\s_X}Q_X\to X\to 0$$
such that $P_X$ and $Q_X$ are finitely generated projective, and let
$\Si=\{\s_X\mid X\in\X_\fp\}$. Then $$\Mod B=\X_\fp^\perp=\Mod
A_\Si.$$ Therefore, $f\colon A\to B$ is a universal localization, since
$\X_\fp^\perp$ determines the corresponding ring epimorphism
uniquely up to isomorphism, see the proof of Proposition~\ref{pr:hom-epi2}.

Now suppose $f\colon A\to B$ is a universal localization. Then
restriction identifies the category of $B$-modules with a full extension
closed subcategory of $\Mod A$. Thus we have induced isomorphisms
$$\Ext_B^*(X,Y)\xto{\sim}\Ext_A^*(X,Y)$$ for all $B$-modules $X,Y$,
since $A$ is hereditary. It follows that $f$ is a homological epimorphism.
\end{proof}

\begin{rem}
Neither implication in Theorem~\ref{th:epi} is true if one drops the
assumption on the ring $A$ to be hereditary, not even if the global
dimension is $2$. In \cite{Ke}, Keller gives an example of a B\'ezout
domain $A$ and a non-zero ideal $I$ such that the canonical map
$A\to A/I$ is a homological epimorphism, but any map $\s$ between
finitely generated projective $A$-modules needs to be invertible
if $\s\otimes_AA/I$ is invertible. We refine the construction so that
$\gldim A = 2$, see Example~\ref{ex:tc_fail}. On the other
hand, Neeman, Ranicki, and Schofield use finite dimensional algebras
to construct in \cite{NRS} examples of universal localizations that
are not homological epimorphisms. They are also able to construct such
examples of global dimension $2$, see~\cite[Remark 2.13]{NRS}.
\end{rem}

\section{The telescope conjecture}\label{se:tel}

Now we are ready to state and prove an extended version of Theorem~A after recalling
the necessary notions.

Let $A$ be a ring.  A complex of $A$-modules is called \emph{perfect}
if it is isomorphic to a bounded complex of finitely generated
projective modules.  Note that a complex $X$ is perfect if and only if
the functor $\Hom_{\bfD(\Mod A)}(X,-)$ preserves coproducts. One
direction of this statement is easy to prove since $\Hom_{\bfD(\Mod
A)}(A,-)$ preserves coproducts and every perfect complex is finitely
built from $A$. The converse follows from \cite[Lemma~2.2]{Ne1992} and
\cite[Proposition~3.4]{BN}. Recall also that a localizing subcategory
$\C$ of $\bfD(\Mod A)$ is \emph{generated by perfect complexes} if
$\C$ admits no proper localizing subcategory containing all perfect
complexes from $\C$.

\begin{thm}\label{th:TC}
Let $A$ be a hereditary  ring. For a localizing subcategory
$\C$ of $\bfD(\Mod A)$ the following conditions are equivalent:
\begin{enumerate}
\item There exists a localization functor $L\colon\bfD(\Mod
A)\to\bfD(\Mod A)$ that  preserves coproducts and such that $\C=\Ker L$.
\item The localizing subcategory $\C$ is generated by perfect
complexes.
\item There exists a localizing subcategory $\D$ of $\bfD(\Mod
A)$ that is closed under products such that  $\C={^\perp}\D$.
\end{enumerate}
\end{thm}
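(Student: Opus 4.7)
The plan is to translate the three conditions into module-theoretic statements about a single Ext-orthogonal pair in $\Mod A$, and then show that all three reduce to one simple closure condition. By Proposition~\ref{pr:thick-corr}, write $\C = \bfD_\X(\Mod A)$ for a unique extension closed abelian subcategory $\X$ of $\Mod A$ closed under coproducts, and set $\Y = \X^\perp$ in $\Mod A$. Since $A$ is hereditary, $\Y$ is automatically extension closed abelian, and it is always closed under products (because $\Ext^*_A(X,-)$ commutes with products). The theorem will then follow once each of (1), (2), (3) is shown to be equivalent to the single assertion that $\Y$ is closed under coproducts, i.e., that $\Y$ is closed under both products and coproducts.

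For (1), I would use the standard fact about Bousfield localizations in triangulated categories: an exact localization functor $L$ with $\Ker L$ already closed under coproducts preserves coproducts if and only if $\Im L$ is closed under coproducts. If such an $L$ exists with $\Ker L = \C$, then by the proof of Proposition~\ref{pr:local} the pair $(H^0\Ker L, H^0\Im L) = (\X,\Y)$ is Ext-orthogonal, so $\Im L = \bfD_\Y(\Mod A)$, and coproduct-preservation translates via Proposition~\ref{pr:thick-corr} into $\Y$ being closed under coproducts. Conversely, if $\Y$ is closed under coproducts, then it is closed under both products and coproducts, so Corollary~\ref{co:perp} supplies the completeness of $(\X,\Y)$; Proposition~\ref{pr:local} then produces $L$, which preserves coproducts because the 5-term sequences commute with coproducts by Lemma~\ref{le:seq}(2).

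For (2), the key observation is that every perfect complex over a hereditary ring is a finite direct sum of shifts of finitely presented modules: by Lemma~\ref{le:formal} a perfect complex is isomorphic to the direct sum of its cohomology, and each finitely presented module over $A$ has a length-one resolution by finitely generated projectives. Hence the perfect complexes in $\C$ correspond precisely to shifts of the finitely presented modules in $\X_\fp := \X \cap \mod A$, and via Proposition~\ref{pr:thick-corr} the condition that $\C$ is generated by perfect complexes becomes the condition that $\X$ is the smallest extension closed abelian subcategory of $\Mod A$ containing $\X_\fp$ and closed under coproducts, which is the statement $\X = \varinjlim \X_\fp$. By Theorem~\ref{th:ext} this is equivalent to $\Y$ being closed under coproducts.

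For (3), a localizing subcategory $\D$ of $\bfD(\Mod A)$ that is also closed under products corresponds (using Lemma~\ref{le:formal} to reduce products in $\bfD(\Mod A)$ to degreewise products of modules) to an extension closed abelian $\Y' \subseteq \Mod A$ closed under both products and coproducts, and the relation $\C = {}^\perp\D$ translates via Proposition~\ref{pr:thick} to $(\X,\Y')$ being an Ext-orthogonal pair in $\Mod A$, forcing $\Y' = \Y$. This shows (3) $\Rightarrow$ $\Y$ closed under coproducts, and for the converse we take $\D := \bfD_\Y(\Mod A)$ after using Corollary~\ref{co:perp} to obtain the Ext-orthogonality $\X = {}^\perp \Y$. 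The principal technical input is Theorem~\ref{th:ext}, which is what bridges the derived-category statement (2) about generation by perfect complexes and the module-theoretic closure property of $\Y$; once this bridge is in place the rest of the argument is a matter of unwinding the established bijections.
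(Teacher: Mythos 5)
Your proof is essentially correct and draws on the same arsenal as the paper: Proposition~\ref{pr:thick-corr} and Proposition~\ref{pr:thick} to move between $\bfD(\Mod A)$ and $\Mod A$, Corollary~\ref{co:perp} and Proposition~\ref{pr:local} to produce the localization functor, and Theorem~\ref{th:ext} as the technical heart. The organization is slightly different in flavor: you reduce each of (1), (2), (3) to a single module-theoretic hub condition ($\Y = \X^\perp$ closed under coproducts), whereas the paper proves the cycle (1)~$\Rightarrow$~(2)~$\Rightarrow$~(3)~$\Rightarrow$~(1) and in particular handles (2)~$\Rightarrow$~(3) purely inside the triangulated category, via the standard fact that a set of compact objects generates a smashing localizing subcategory. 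Both routes are fine; the hub-and-spoke version is a bit more symmetric but makes each implication do a little more work.

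One step deserves more care. In your treatment of (2) you assert that every perfect complex in $\C$ decomposes into shifts of objects of $\X_\fp$, which requires knowing that a perfect complex over a hereditary ring has \emph{finitely presented} cohomology. Your justification (fp modules admit length-one fg projective resolutions) only gives the easy converse, that fp modules are perfect. The needed direction is still true but for a non-noetherian hereditary ring it is not immediate: one must observe, by descending induction from the top degree of a bounded complex of fg projectives, that the boundaries are fg projective (images of fg projectives, and submodules of projectives over a hereditary ring are projective), whence the cycles are fg projective (kernels of split surjections onto the boundaries one degree up), and finally the cohomology modules are fp. Alternatively one can argue that each $H^nX[-n]$ is a direct summand of the compact object $X$ in $\bfD(\Mod A)$, hence compact, hence perfect, hence fp. The paper sidesteps this entirely because its (1)~$\Rightarrow$~(2) only uses that fp modules are perfect, and its (2)~$\Rightarrow$~(3) never passes back to $\Mod A$. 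So your argument needs this extra lemma supplied; once it is, everything goes through.
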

\begin{proof}
(1) $\Rightarrow$ (2): The kernel $\Ker L$ and the essential image
$\Im L$ of a localization functor $L$ form an Ext-orthogonal pair for
$\bfD(\Mod A)$; see \cite[Lemma~3.3]{BIK}. We obtain an Ext-orthogonal
pair $(\X,\Y)$ for $\Mod A$ by taking $\X=H^0\Ker L$ and $\Y=H^0\Im
L$; see Proposition~\ref{pr:thick}. The fact that $L$ preserves
coproducts implies that $\Y$ is closed under taking coproducts. It
follows from Theorem~\ref{th:ext} that $\X$ is generated by finitely
presented modules.  Each finitely presented module is isomorphic in
$\bfD(\Mod A)$ to a perfect complex, and therefore $\Ker L$ is
generated by perfect complexes.

(2) $\Rightarrow$ (3): Suppose that $\C$ is generated by perfect
complexes. Then there exists a localization functor $L\colon\bfD(\Mod
A)\to\bfD(\Mod A)$ such that $\Ker L=\C$. Thus we have an
Ext-orthogonal pair $(\C,\D)$ for $\bfD(\Mod A)$ with $\D=\Im L$; see
\cite[Lemma~3.3]{BIK}.  Now observe that $\D=\C^\perp$ is closed under
coproducts, since for any perfect complex $X$ the functor
$\Hom_{\bfD(\Mod A)}(X,-)$ preserves coproducts. It follows that $\D$
is a localizing subcategory.

(3) $\Rightarrow$ (1): Let $\D$ be a localizing subcategory that is
closed under products such that $\C={^\perp}\D$. Then $\Y=H^0\D$ is an
extension closed abelian subcategory of $\Mod A$ that is closed under
products and coproducts; see Proposition~\ref{pr:thick-corr}. In the
proof of Corollary~\ref{co:perp} we have constructed a localization
functor $L\colon\bfD(\Mod A)\to\bfD(\Mod A)$ such that $\C=\Ker
L$. More precisely, there exists a homological epimorphism $A\to B$
such that $L=-\otimes_A^\bfL B$. It remains to notice that this
functor preserves coproducts.
\end{proof}

\begin{rem}
The implication (1) $\Rightarrow$ (2) is known as the \emph{telescope
conjecture}.  Let us sketch the essential ingredients of the proof of
this implication. In fact, the proof is not as involved as one might
expect from the references to preceding results of this work.

We need the 5-term exact sequence $\e_M$ for each module $M$ which one
gets immediately from the the localization functor $L$; see
Proposition~\ref{pr:local}. The perfect complexes generating $\C$ are
constructed in the proof of Theorem~\ref{th:ext}, where the relevant
implication is (1) $\Rightarrow$ (2). For this proof, one uses
Lemmas~\ref{le:ext1perp} -- \ref{le:subfp}, but this is all.
\end{rem}

\begin{rem}
Let $A$ be a hereditary ring and $B$ a ring that is derived equivalent
to $A$, that is, there is an equivalence of triangulated categories
$\bfD(\Mod A)\xto{\sim}\bfD(\Mod B)$. Then the statement of
Theorem~\ref{th:TC} carries over from $A$ to $B$. In particular, the
statement of Theorem~\ref{th:TC} holds for every tilted algebra in the
sense of Happel and Ringel \cite{HR}.
\end{rem}

Given the proof of the telescope conjecture for the derived categories
of hereditary rings, one may be tempted to think that perhaps it is
possible to get a similar result for rings of higher global
dimension. Here we show that this is not the case. Namely, we
construct a class of rings for which the conjecture fails for the
derived category, and we will see that some of them have global
dimension $2$. To achieve this, we use the following result due to
Keller \cite{Ke}.

\begin{lem}\label{le:tc_fail} 
Let $A$ be a ring and $I$ a non-zero two-sided ideal of $A$ such that
\begin{enumerate}
\item $\Tor_i^A(A/I,A/I) = 0$ for all $i \ge 1$ (that is, the
surjection $A \to A/I$ is a homological epimorphism), and
\item $I$ is contained in the Jacobson radical of $A$.
\end{enumerate}
Then $L = - \lotimes_A A/I\colon \bfD(\Mod A) \to \bfD(\Mod A)$ is a
coproduct preserving localization functor but $\Ker L$, which is the
smallest localizing subcategory containing $I$, contains no
non-zero perfect complexes. In particular, the telescope
conjecture fails for $\bfD(\Mod A)$.
\end{lem}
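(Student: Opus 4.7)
The plan is to split the statement into three parts and handle each in turn. First, since $A \to A/I$ is a homological epimorphism by assumption~(1), Lemma~\ref{le:hom-epi}(1) immediately gives that $L = -\lotimes_A A/I$ is a localization functor, and the derived tensor product visibly preserves coproducts. Second, I would identify $\Ker L$ with the localizing subcategory $\langle I \rangle$ generated by $I$: the short exact sequence $0 \to I \to A \to A/I \to 0$ produces a triangle in $\bfD(\Mod A)$ which, after applying $L$ and using $A/I \lotimes_A A/I \simeq A/I$ (a consequence of (1) together with $A \to A/I$ being a ring epimorphism), is seen to be the localization triangle of $A$. Hence $I$ is the value at $A$ of the colocalization functor $\Ga$ (the fiber of $\Id \to L$), which preserves coproducts because $L$ does. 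A standard argument---the full subcategory $\{X : \Ga X \in \langle I \rangle\}$ is localizing and contains the generator $A$, hence all of $\bfD(\Mod A)$---shows $\Ga X \in \langle I \rangle$ for every $X$; applied to $X \in \Ker L$, where $\Ga X \simeq X$, this gives $\Ker L \subseteq \langle I \rangle$, and the reverse inclusion is clear.

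The technical heart of the proof is showing that $\Ker L$ contains no non-zero perfect complex. Suppose $P$ is such a complex, represented as a bounded complex $P^a \to \cdots \to P^b$ of finitely generated projective $A$-modules with $P^a, P^b \neq 0$. Since each $P^i$ is projective, $P \lotimes_A A/I = P \otimes_A A/I$ is a bounded acyclic complex of finitely generated projective $A/I$-modules, hence contractible. I would argue by induction on $b - a$. If $b = a$, then $P^a / P^a I = 0$, so Nakayama's lemma (applicable since $I \subseteq \mathrm{Jac}(A)$ and $P^a$ is finitely generated) forces $P^a = 0$, a contradiction. If $b > a$, the top differential $d \colon P^{b-1} \to P^b$ becomes split surjective modulo $I$, and Nakayama applied to $\Coker d$ upgrades this to genuine surjectivity of $d$. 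Projectivity of $P^b$ then splits off a contractible summand, yielding a decomposition of complexes $P \cong P' \oplus (P^b \xrightarrow{\id} P^b)$, where $P'$ has shorter range and is still quasi-isomorphic to $P$. The crucial verification is that the preceding differential $P^{b-2} \to P^{b-1}$ lands in $\Ker d$, which is automatic since its composite with $d$ vanishes. Induction produces the desired contradiction.

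The failure of the telescope conjecture is then immediate: $\Ker L$ is a non-zero smashing subcategory (since $I \neq 0$ lies in it), yet its only perfect complexes are zero, so it cannot be generated by perfect complexes. The main obstacle I foresee is the inductive peeling step: one must carefully verify that the splitting of the top differential genuinely yields a direct sum decomposition in the category of complexes, and that the ``range'' measure strictly decreases at each step so the induction hypothesis applies.
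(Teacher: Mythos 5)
The paper does not actually prove this lemma; it is quoted directly as ``a result due to Keller \cite{Ke}'', so there is no in-text proof to compare against. Your reconstruction is correct and matches Keller's original argument in all essential respects: $L = -\lotimes_A A/I$ is a coproduct-preserving localization functor by the homological-epimorphism hypothesis, $\Ker L$ coincides with the localizing subcategory generated by $I$ via the localization triangle $I \to A \to A/I$ of the generator $A$, and the Nakayama argument (peeling a contractible summand $P^b \xrightarrow{\id} P^b$ off the top of a minimal bounded complex of finitely generated projectives) shows no nonzero perfect complex can have $P \otimes_A A/I$ acyclic when $I \subseteq \operatorname{rad}(A)$. Note two small points you could tighten: you only invoke the ``localization functor'' part of Lemma~\ref{le:hom-epi}(1), which is what holds without a hereditary hypothesis (the claim there about the essential image being $\bfD_\Y(\Mod A)$ relies on the hereditary assumption elsewhere in the paper, but you do not use it); and your worry about the induction is harmless --- just induct on the number of nonzero terms and re-trim, or observe that if the remaining complex $P'$ is zero then $P$ was already contractible, contradicting $P \neq 0$.
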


In order to find such $A$ and $I$ with (right) global dimension of $A$
equal to $2$, we restrict ourself to the case when $A$ is a valuation
domain. That is, $A$ is a commutative domain with the property that
for each pair $a, b \in A$, either $a$ divides $b$ or $b$ divides
$a$. We refer to~\cite[Chapter II]{FS} for a discussion of such
domains. Here, we mention only the properties which we need for our
example:

\begin{lem} \label{le:vd_basic}
The following holds for a valuation domain $A$ which is not a field.
\begin{enumerate}
\item The ring $A$ is local and its weak global dimension equals $1$.
\item The maximal ideal $P$ of $A$ is either principal or idempotent.
\item For any ideal $I$ of $A$ we have the isomorphism $\Tor_1^A(A/I,A/I) \cong I/I^2$.
\end{enumerate}
\end{lem}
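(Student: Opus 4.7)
The plan is to treat the three parts one at a time, as they use essentially independent arguments, with only parts (1) and (2) genuinely exploiting the valuation property.

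For part (1), I would first verify that $A$ is local by showing the set of non-units $P$ is an ideal. Closure under scalar multiplication is clear, and for closure under addition I take non-units $a, b$; by the valuation hypothesis one divides the other, say $b = ac$, whence $a + b = a(1+c)$, and this is still a non-unit because a product with a non-unit factor can never be invertible. So $P$ is the unique maximal ideal. For the weak global dimension, the valuation property shows every finitely generated ideal is principal (the "largest" generator absorbs the rest), hence torsion-free and isomorphic to a submodule of $A$, which for a domain means isomorphic to $A$ itself or zero, and therefore flat. Since the weak global dimension of a ring is the supremum of flat dimensions of finitely presented cyclic modules, and each $A/(a)$ admits the flat resolution $0 \to A \xrightarrow{a} A \to A/(a) \to 0$ of length at most $1$, we get $\text{w.gl.dim}\, A \le 1$. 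Because $A$ is not a field, $A/P$ is a non-zero non-flat module (multiplication by any nonzero $p \in P$ is injective on $A$ but zero on $A/P$), so the dimension equals exactly $1$.

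For part (2), suppose $P^2 \neq P$ and pick $p \in P \smallsetminus P^2$. I claim $P = (p)$. Given any $q \in P$, the valuation property says either $p \mid q$, in which case $q \in (p)$ directly, or $q \mid p$, say $p = qr$. In the latter case $r$ cannot lie in $P$, for otherwise $p = qr \in P \cdot P = P^2$, contradicting the choice of $p$; so $r$ is a unit and $(q) = (p)$. In both cases $q \in (p)$, so $P = (p)$ is principal.

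For part (3), tensor the short exact sequence $0 \to I \to A \to A/I \to 0$ with $A/I$. Since $A$ is flat, the long exact Tor sequence collapses to
\[
0 \to \Tor_1^A(A/I, A/I) \to I \otimes_A A/I \xrightarrow{\mu} A \otimes_A A/I \to A/I \otimes_A A/I \to 0,
\]
where $\mu$ is induced by the inclusion $I \hookrightarrow A$. Under the canonical identification $I \otimes_A A/I \cong I/I^2$ and $A \otimes_A A/I \cong A/I$, the map $\mu$ sends $x \otimes \bar a$ to $\overline{xa}$, which is zero in $A/I$ because $xa \in I$. Hence $\mu = 0$ and $\Tor_1^A(A/I, A/I) \cong I/I^2$. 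Note this step uses only that $A$ is commutative; the valuation hypothesis plays no role.

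The only mildly subtle point is the exact determination of the weak global dimension in (1); the rest is bookkeeping with the valuation axiom and a standard Tor computation, so I do not expect a real obstacle.
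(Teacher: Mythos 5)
Your proof is correct throughout, and it is genuinely more self-contained than the paper's. For parts (1) and (2) the paper simply cites Fuchs--Salce (VI.10.4 for the weak global dimension and Section~II.4, where the prime ideal $I' = \{a \in A \mid aI \subsetneq I\}$ is used to characterize when $I'\cdot I \subsetneq I$), whereas you give direct elementary arguments: locality via the non-units forming an ideal, w.gl.dim $\leq 1$ by reducing to $A/(a)$ with $(a)\cong A$, the lower bound via non-flatness of $A/P$, and the clean dichotomy in (2) by picking $p \in P\setminus P^2$ and using the total order on divisibility. This buys independence from the reference at the cost of a little more text; both routes are sound. Part (3) is essentially identical to the paper's computation.

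Two small presentational points. First, in (1) the clause ``torsion-free and isomorphic to a submodule of $A$, which for a domain means isomorphic to $A$ itself or zero'' reads as if being a submodule of $A$ already forced isomorphism with $A$; the actual justification is that you have already shown the ideal is principal, so a nonzero one is isomorphic to $A$ via multiplication. Second, the reduction ``weak global dimension is the supremum of flat dimensions of finitely presented cyclic modules'' is the standard criterion (equivalently, of $A/I$ over finitely generated ideals $I$), and it is worth flagging that this is what makes the reduction to principal ideals legitimate; as stated it is correct but terse. Neither affects the validity of the argument.
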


\begin{proof}
(1) The ring $A$ is local since the ideals of $A$ are totally
    ordered by inclusion. The second part of (1) follows
    from~\cite[VI.10.4]{FS}.

(2) This is a direct consequence of results in~\cite[Section
    II.4]{FS}. For an ideal $I$, one defines
$$ I' = \{a \in A \mid aI \subsetneq I\}. $$
It turns out that $I'$ is always a prime ideal and $I$ is
naturally an $R_{I'}$-module. Moreover, $I = I'$ if $I$
itself is a prime ideal, \cite[II.4.3 (iv)]{FS}. In particular we have
$P' = P$. On the other hand, \cite[p.\ 69, item (d)]{FS} says
that $I' \cdot I \subsetneq I$ if and only if $I$ is a principal
ideal of $R_{I'}$. Specialized to $P$, this precisely says that
$P^2 = P' \cdot P \subsetneq P$ if and only if $P$ is a
principal ideal of $R$.

(3) Tensoring the exact sequence $0 \to I \to A \to A/I \to 0$ with
    $A/I$ gives the exact sequence
$$ A/I \otimes_A I \overset{0}\longrightarrow A/I
\overset{\sim}\longrightarrow A/I \otimes_A A/I \to 0. $$
It follows that $\Tor_1^A(A/I,A/I) \cong A/I \otimes_A I$, and the
right exactness of the tensor product yields $A/I \otimes_A I \cong
I/I^2$.
\end{proof}

The following result is a straightforward consequence.

\begin{prop} \label{pr:cntex}
Let $A$ be a valuation domain whose maximal ideal $P$ is
non-principal. Then the telescope conjecture fails for $\bfD(\Mod
A)$. More precisely, $L = - \lotimes_A A/P$ is a coproduct preserving
localization functor on $\bfD(\Mod A)$ whose kernel is non-trivial (it
contains $P$) but not generated by perfect complexes.
\end{prop}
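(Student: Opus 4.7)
The plan is to reduce directly to Lemma \ref{le:tc_fail} by taking $I = P$. This lemma demands two ingredients: $P$ must lie in the Jacobson radical of $A$, and the surjection $A \to A/P$ must be a homological epimorphism. Both should fall out routinely from Lemma \ref{le:vd_basic}, so no deep argument is required; the proof is essentially a verification of hypotheses.

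First, I would observe that $P$ is non-zero (the zero ideal is principal, so non-principality of $P$ forces $P \ne 0$), and in particular $A$ is not a field, so Lemma \ref{le:vd_basic} applies. Part (1) of that lemma tells us that $A$ is local with maximal ideal $P$, which is therefore the Jacobson radical; this takes care of condition (2) of Lemma \ref{le:tc_fail} and also guarantees $P \in \Ker L$ is non-zero, so $\Ker L$ is non-trivial.

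Next I would verify that $\Tor_i^A(A/P, A/P) = 0$ for all $i \ge 1$. For $i = 1$, Lemma \ref{le:vd_basic}(3) gives $\Tor_1^A(A/P,A/P) \cong P/P^2$. Since $P$ is non-principal, Lemma \ref{le:vd_basic}(2) forces $P$ to be idempotent, so $P/P^2 = 0$ and the $\Tor_1$ vanishes. For $i \ge 2$, vanishing is immediate from the fact that the weak global dimension of $A$ is $1$, by Lemma \ref{le:vd_basic}(1). This establishes condition (1) of Lemma \ref{le:tc_fail}.

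With both hypotheses of Lemma \ref{le:tc_fail} in hand, its conclusion gives exactly the statement of the proposition: $L = - \lotimes_A A/P$ is a coproduct-preserving localization functor on $\bfD(\Mod A)$, its kernel is the smallest localizing subcategory containing $P$ (hence non-trivial), and this kernel contains no non-zero perfect complex, so in particular cannot be generated by perfect complexes. The telescope conjecture therefore fails for $\bfD(\Mod A)$. The only conceivable obstacle is the existence of such a valuation domain $A$, but this is classical (one can take, for example, a valuation domain whose value group is $\mathbb{R}$), so it poses no real difficulty.
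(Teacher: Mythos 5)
Your proof is correct and takes exactly the same route as the paper: verify the two hypotheses of Lemma~\ref{le:tc_fail} for $I=P$, using Lemma~\ref{le:vd_basic}(1) for the Jacobson-radical condition and Lemma~\ref{le:vd_basic}(2)--(3) plus the weak global dimension bound for the $\Tor$-vanishing. You simply spell out the details the paper compresses into ``Condition (1) follows easily from Lemma~\ref{le:vd_basic}.''
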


\begin{proof}
It is enough to prove that the maximal ideal $P$ meets the conditions
of Lemma~\ref{le:tc_fail}. As $P$ is the Jacobson radical of $A$,
condition (2) is fulfilled. Condition (1) follows easily from
Lemma~\ref{le:vd_basic}.
\end{proof}

What we are left with now is to construct a valuation domain whose
maximal ideal is non-principal and whose global dimension is $2$. To
this end, we recall the basic tool to construct valuation domains with
given properties: the value group. If $A$ is a valuation domain,
denote by $Q$ its quotient field and by $U$ the group of units of
$A$. Then $U$ is clearly a subgroup of the multiplicative group $Q^* =
Q \setminus \{0\}$ and
$$ G = Q^* / U $$
is a totally ordered abelian group. More precisely, $G$ is an abelian
group, the relation $\leq$ on $G$ defined by $aU \leq bU$ if $ba^{-1}
\in A$ gives a total order on $G$, and we have the compatibility
condition
$$
\alpha \leq \beta \quad
\textrm{implies} \quad
\alpha \cdot \gamma \leq \beta \cdot \gamma \quad
\textrm{ for all } \alpha,\beta,\gamma \in G.
$$
The pair $(G,\leq)$ is called the \emph{value group} of
$A$. We will use the following fundamental result \cite[Theorem~3.8]{FS}.

\begin{prop}\label{pr:vd_constr} 
Let $k$ be a field and $(G, \leq)$ a totally ordered abelian
group. Then there is a valuation domain $A$ whose residue field $A/P$
is isomorphic to $k$, and whose value group is isomorphic to $G$ as an
ordered group.
\end{prop}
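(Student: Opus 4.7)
The plan is to realize $A$ as a subring of the Hahn series field $K = k((t^G))$, whose elements are formal expressions $f = \sum_{g \in G} c_g t^g$ (with $c_g \in k$) whose support $\Supp(f) = \{g \in G : c_g \neq 0\}$ is a well-ordered subset of $(G,\leq)$. Addition is coefficient-wise and multiplication is the Cauchy convolution $(fh)_g = \sum_{g_1 + g_2 = g} c_{g_1}(f) c_{g_2}(h)$. The first step is to verify that these operations are well-defined on $K$: each coefficient of $fh$ must be a finite sum, and $\Supp(f+h)$ and $\Supp(fh)$ must themselves be well-ordered. This rests on a purely order-theoretic result (\emph{Neumann's lemma}) stating that the union of two well-ordered subsets of $G$ is well-ordered, and that the sumset $S_1 + S_2$ of two well-ordered subsets is well-ordered with each element having only finitely many representations as $s_1 + s_2$.

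Next I would show that $K$ is a field. For a nonzero $f \in K$ with minimal support element $g_0$ and leading coefficient $c_0$, one writes $f = c_0 t^{g_0}(1 - u)$ with $\Supp(u) \subseteq G_{>0}$, and constructs the inverse of $1-u$ as the formal geometric series $\sum_{n \geq 0} u^n$. The technical content here is to show that this series defines an element of $K$: namely, that $\bigcup_{n \geq 0} \Supp(u^n)$ is well-ordered in $G$, and that each fixed $g \in G$ lies in $\Supp(u^n)$ for only finitely many $n$. Both follow from an inductive application of Neumann's lemma together with the positivity assumption $\Supp(u) \subseteq G_{>0}$.

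With $K$ established as a field, I define
\[
A = \{f \in K : \Supp(f) \subseteq G_{\geq 0}\}, \qquad P = \{f \in A : \Supp(f) \subseteq G_{>0}\}.
\]
A direct check shows that $A$ is a subring of $K$ and that $P$ is an ideal. The map $A \to k$ sending $f$ to its constant coefficient $c_0(f)$ is a surjective ring homomorphism with kernel $P$, giving $A/P \cong k$. To see that $A$ is a valuation domain, set $v(f) = \min \Supp(f)$ for nonzero $f$; then for every $0 \neq f \in K$ either $v(f) \geq 0$ (so $f \in A$) or $v(f) < 0$ (so $v(f^{-1}) > 0$ and $f^{-1} \in P \subseteq A$). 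Hence any two elements of $A$ are comparable by divisibility. Since $v(t^g) = g$ for each $g \in G$, the value group of $A$ is precisely $G$ with its prescribed ordering.

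The principal obstacle lies entirely in the foundational step: setting up the Hahn series field itself, that is, verifying Neumann's lemma and its consequences for the well-definedness of multiplication and for the convergence of the geometric series $\sum u^n$ when $\Supp(u) \subseteq G_{>0}$. These facts are combinatorial rather than algebraic, and the subtlest point is showing well-orderedness of $\bigcup_n \Supp(u^n)$, for which one cannot simply invoke a finite-induction argument but must exploit well-ordering of the ambient sums directly.
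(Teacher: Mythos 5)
Your Hahn--Mal'cev--Neumann construction is correct and does prove the proposition, but it is not the route the paper (via its citation to \cite[Theorem~3.8]{FS}) takes, and the difference matters for the paper's later use of this result. The Fuchs--Salce construction, as the paper relies on it in Example~\ref{ex:tc_fail}, is essentially the following more economical variant: since $G$ is torsion-free, the group algebra $k[G]$ is a domain; on its fraction field $K$ one defines $v\bigl(\sum c_g t^g\bigr)=\min\{g : c_g\neq 0\}$ and extends multiplicatively; the valuation ring $A=\{x\in K^* : v(x)\geq 0\}\cup\{0\}$ is a valuation domain with value group $G$ and residue field $k$ (the residue map sends $r/s$ to the ratio of leading coefficients). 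This is the ``finite-support'' version of your series ring, and it sidesteps Neumann's lemma entirely because products and inverses are computed inside an honest fraction field rather than via formal infinite sums. It also has the crucial feature, exploited in Example~\ref{ex:tc_fail}, that $A$ is countable whenever $k$ and $G$ are: the full Hahn series field $k((t^G))$ is uncountable already for $G=\mathbb{Z}^{(\mathbb{N})}$ and $k$ countable, since there are continuum-many well-ordered subsets of $G_{\geq 0}$, so your construction would not directly support the paper's countability claim. Finally, note that you have flagged but not proved Neumann's lemma (well-orderedness of the closure of a well-ordered subset of $G_{>0}$ under finite sums, with finite fibers); for the full Hahn series approach this is a genuine and nontrivial combinatorial burden, whereas the group-ring approach simply never incurs it.
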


Now, we can give the promised example.

\begin{exm} \label{ex:tc_fail}
Let $G$ be a free abelian group of countable rank. If we view $G$ as
the group $\bbZ^{(\bbN)}$ (with additive notation), then $G$ is
naturally equipped with the lexicographic ordering which makes it to a
totally ordered group. Let $A$ be a valuation domain whose value group
is isomorphic to $G$. In fact, looking closer at the particular
construction in~\cite[Section II.3]{FS}, we can construct $A$ such
that it is countable.

We claim that the maximal ideal $P$ of $A$ is non-principal and
that $\gldim A = 2$. Indeed, each ideal of $A$ is flat and countably
generated since the value group is countable. Thus, each ideal is of
projective dimension at most $1$ and $\gldim A \le 2$. On the other
hand, it is easy to see that $A$ has non-principal, hence
non-projective, ideals and so is not hereditary. One of them is $P$,
which is generated by elements of $A$ whose cosets in the value group
$Q^*/U$ correspond, under the isomorphism $Q^*/U \cong \bbZ^{(\bbN)}$,
to the canonical basis elements $e_1, e_2, e_3, \ldots \in
\bbZ^{(\bbN)}$.

This way, we obtain a countable valuation domain $A$ of global
dimension $2$ such that the telescope conjecture fails for $\bfD(\Mod
A)$ by Proposition~\ref{pr:cntex}.
\end{exm}

\section{A bijective correspondence}\label{se:bij}

In this final section we summarize our findings by stating explicitly the
correspondence between various structures arising from Ext-orthogonal
pairs for hereditary rings. In particular, this completes the proof of an extended version of Theorem~B:

\begin{thm}\label{th:bijection}
For a hereditary ring $A$ there are bijections between the following
sets:
\begin{enumerate}
\item Ext-orthogonal pairs $(\X,\Y)$ for $\Mod A$ such that $\Y$ is
closed under coproducts.
\item Ext-orthogonal pairs $(\Y,\Z)$ for $\Mod A$ such that $\Y$ is
closed under products.
\item Extension closed abelian subcategories of $\Mod A$ that are
closed under products and coproducts.
\item Extension closed abelian subcategories  of $\mod A$.
\item Homological epimorphisms $A\to B$ (up to isomorphism).
\item Universal localizations $A\to B$ (up to isomorphism).
\item Localizing subcategories of $\bfD(\Mod A)$ that are closed under products.
\item Localization functors $\bfD(\Mod A)\to\bfD(\Mod A)$ preserving
coproducts (up to natural isomorphism).
\item Thick subcategories of $\bfD^b(\mod A)$.
\end{enumerate}
\end{thm}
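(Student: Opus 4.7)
The plan is to organize the nine conditions around a central hub, namely condition~(3), and to show each of the other eight is in bijection with~(3). Most of the work has already been done in \S\ref{se:ext}--\S\ref{se:uni-loc}; what remains is to assemble these results and to check that all the closure properties line up.

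For (1) $\leftrightarrow$ (3) $\leftrightarrow$ (2), the crucial observation is that over any ring and for any subcategory $\C\subseteq\Mod A$, the class $\C^\perp$ is automatically closed under products (compute $\Ext^n_A(X,-)$ from a projective resolution of $X$), and dually ${}^\perp\C$ is automatically closed under coproducts. For hereditary $A$ these classes are moreover extension closed abelian subcategories. Hence any $\Y$ in (1) lies in (3), while any $\Y$ in (3) yields a complete Ext-orthogonal pair $({}^\perp\Y,\Y)$ by Corollary~\ref{co:perp}, giving an element of~(1). The bijection (2) $\leftrightarrow$ (3) is dual, using $\Z=\Y^\perp$.

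For the ring-theoretic side, (3) $\leftrightarrow$ (5) is Proposition~\ref{pr:hom-epi2}, together with the easy converse that restriction along a homological epimorphism $A\to B$ identifies $\Mod B$ with an extension closed abelian subcategory of $\Mod A$ closed under products and coproducts; and (5) $\leftrightarrow$ (6) is Theorem~\ref{th:epi}. For the derived-category side, (3) $\leftrightarrow$ (7) follows from Proposition~\ref{pr:thick-corr} together with the fact that $H^0$ and its inverse $\Y\mapsto\bfD_\Y(\Mod A)$ preserve closure under products, since products in $\bfD(\Mod A)$ are formed componentwise and cohomology commutes with products of complexes. For (7) $\leftrightarrow$ (8), the assignment $L\mapsto\Im L$ does the job: the image of any localization is automatically closed under products, and $L$ preserves coproducts exactly when its image is in addition closed under coproducts, i.e.\ localizing. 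The inverse passes through (5): given $\D$ in (7), one uses Corollary~\ref{co:perp} to obtain a homological epimorphism $A\to B$ realizing $\Y=H^0\D$, and takes $L=-\otimes^{\bf L}_A B$, which preserves coproducts as a left adjoint.

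Finally, (3) $\leftrightarrow$ (4) $\leftrightarrow$ (9) completes the picture. The bijection (3) $\leftrightarrow$ (4) sends $\Y$ to $({}^\perp\Y)\cap\mod A$, with inverse $\D\mapsto\D^\perp$: that $\D^\perp$ lands in (3) uses that $\D$ consists of finitely presented modules over a hereditary (hence coherent) ring, so $\Ext^*_A(D,-)$ preserves coproducts for $D\in\D$; mutual inverseness follows from Theorem~\ref{th:ext}, which yields ${}^\perp\Y=\varinjlim\bigl(({}^\perp\Y)\cap\mod A\bigr)$, together with Proposition~\ref{pr:ext}, which recovers $\D$ from $\D^\perp$. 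The bijection (4) $\leftrightarrow$ (9) is Remark~\ref{re:thick} applied to $\mod A$. The main obstacle is not any single hard step but the bookkeeping required to verify that the various closure conditions---abelian, extension closed, under products, under coproducts---are preserved by each correspondence; once the hub-and-spoke structure is in place, every such check reduces to a short invocation of the structural results already established.
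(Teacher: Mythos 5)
Your proposal is correct and takes essentially the same approach as the paper's own proof: the paper likewise organizes the nine sets around condition (3) as a hub and cites the same chain of results (Corollary~\ref{co:perp}, Propositions~\ref{pr:hom-epi1} and~\ref{pr:hom-epi2}, Theorem~\ref{th:epi}, Proposition~\ref{pr:thick-corr}, Theorem~\ref{th:TC}, Theorem~\ref{th:ext} with Proposition~\ref{pr:ext}, and Remark~\ref{re:thick}). The paper presents this as a table of explicit maps with references, whereas you supply the (correct) verifications of the closure-property bookkeeping in prose.
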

\begin{proof}
We state the bijections explicitly in the following table and give the
references to the places where these bijections are established.

\begin{center}
\begin{tabular}{ccl}
Direction&Map&Reference\\ \hline \\[-1.8ex]
(1) $\leftrightarrow$ (3)& $(\X,\Y)\mapsto \Y$& Corollary~\ref{co:perp}\\
(2) $\leftrightarrow$ (3)& $(\Y,\Z)\mapsto \Y$& Corollary~\ref{co:perp}\\
(3) $\to$ (4)& $\Y\mapsto ({^\perp}\Y)\cap\mod A$& Thm.~\ref{th:ext} \& Prop.~\ref{pr:ext}\\
(4) $\to$ (3)& $\C\mapsto \C^\perp$& Thm.~\ref{th:ext} \& Prop.~\ref{pr:ext}\\
(3) $\to$ (5)& $\Y\mapsto (A\to \End_A(FA))$& Proposition~\ref{pr:hom-epi2}\\
(5) $\to$ (3)& $f\mapsto (\Ker f\oplus\Coker f)^\perp$& Proposition~\ref{pr:hom-epi1}\\
(5) $\leftrightarrow$ (6)& $f\mapsto f$& Theorem~\ref{th:epi}\\
(3) $\to$ (7)& $\Y\mapsto \bfD_\Y(\Mod A)$& Proposition~\ref{pr:thick-corr}\\
(7) $\to$ (3)& $\C\mapsto H^0\C$& Proposition~\ref{pr:thick-corr}\\
(7) $\to$ (8)& $\C\mapsto (X\mapsto GX)$& Theorem~\ref{th:TC}\\
(8) $\to$ (7)& $L\mapsto \Im L$& Theorem~\ref{th:TC}\\
(4) $\to$ (9)& $\X\mapsto \bfD^b_\X(\mod A)$& Remark~\ref{re:thick}\\
(9) $\to$ (4)& $\C\mapsto H^0\C$& Remark~\ref{re:thick} \\
\end{tabular}
\end{center}

\noindent
For (3) $\to$ (5), the functor $F$ denotes a left adjoint of the inclusion $\Y\to\Mod A$. For (7) $\to$ (8), the functor $G$ denotes a left adjoint of the inclusion $\C\to\bfD(\Mod A)$,
\end{proof}

Let us mention that this correspondence is related to recent work of
some other authors. In \cite{Sch2007}, Schofield establishes for any
hereditary ring the bijection (4) $\leftrightarrow$ (6).  In
\cite{NS}, Nicol\'as and Saor\'in establish for a differential graded
algebra $A$ a correspondence between recollements for the derived
category $\bfD(A)$ and differential graded homological epimorphisms
$A\to B$. This correspondence specializes for a hereditary ring to the
bijection (5) $\leftrightarrow$ (8).\footnote{The first author is
grateful to Manolo Saor\'in for pointing out this bijection.}

\subsection*{A finiteness condition}

Given an Ext-orthogonal pair for the category of $A$-modules as in
Theorem~\ref{th:bijection}, it is a natural question to ask when its
restriction to the category of finitely presented modules yields a
complete Ext-orthogonal pair for $\mod A$. This is very important
especially when considering relations of results from this paper
to representation theory of
finite dimensional algebras. For that setting, we characterize this
finiteness condition in terms of finitely presented modules;
see also Proposition~\ref{pr:except}.

\begin{prop}
Let $A$ be a finite dimensional hereditary algebra over a field and
$\C$ an extension closed abelian subcategory of $\mod A$. Then the
following are equivalent.
\begin{enumerate}
\item There exists a complete Ext-orthogonal pair $(\C,\D)$ for $\mod A$.
\item The inclusion $\C\to\mod A$ admits a right adjoint.
\item There exists an exceptional object $X\in\C$ such that $\C$ is
the smallest extension closed abelian subcategory of $\mod A$
containing $X$.
\item Let $(\X,\Y)$ be the Ext-orthogonal pair for $\Mod A$ generated
by $\C$. Then for each $M\in\mod A$ the 5-term exact sequence $\e_M$
belongs to $\mod A$.
\end{enumerate}
\end{prop}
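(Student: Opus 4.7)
The plan is to establish the equivalence via the cyclic chain $(1) \Rightarrow (2) \Rightarrow (3) \Rightarrow (4) \Rightarrow (1)$, with Proposition~\ref{pr:except} doing the heavy lifting in the step $(3) \Rightarrow (4)$. The two easy implications are $(1) \Rightarrow (2)$ and $(4) \Rightarrow (1)$. For the former, I send $M \in \mod A$ to the object $X_M$ produced by its 5-term exact sequence; Lemma~\ref{le:exact} supplies the bijection $\Hom_A(C, X_M) \xto{\sim} \Hom_A(C, M)$ for $C \in \C$ and the required functoriality, exhibiting $M \mapsto X_M$ as a right adjoint to the inclusion. For $(4) \Rightarrow (1)$, the sequences produced by (4) are 5-term exact sequences for the pair $(\C, \D)$ with $\D := \Y \cap \mod A$; the only real check is that $(\C,\D)$ is an Ext-orthogonal pair in $\mod A$, which follows by writing down the 5-term sequence of a hypothetical $M \in ({}^\perp\D) \cap \mod A$ and noting that $\C \cap \D = 0$ (any object there would be self-orthogonal, hence zero).

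The main technical step is $(2) \Rightarrow (3)$. Given a right adjoint $R$ to $\iota\colon \C \hookrightarrow \mod A$, pick an injective cogenerator $Q$ of $\mod A$ (e.g.\ $Q = D(A)$) and set $X := R(Q)$. Since $\iota$ is exact, $R$ preserves injectivity, so $X$ is injective in $\C$; as $\C$ inherits hereditarity from $\mod A$ (any Yoneda $n$-extension in $\C$ is one in $\mod A$, so $\Ext^n_\C \hookrightarrow \Ext^n_A = 0$ for $n \ge 2$), this forces $\Ext^1_A(X,X) = \Ext^1_\C(X,X) = 0$, giving exceptionality of $X$. The adjunction identity $\Hom_\C(M, X) = \Hom_A(M, Q)$ shows $X$ is a cogenerator of $\C$, so every simple of $\C$ embeds into $X$; since $X$ is finite-dimensional, $\C$ has only finitely many simples, and splitting off injective envelopes along $X$ shows each indecomposable injective of $\C$ is a direct summand of $X$.

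To conclude $\C$ is the smallest extension closed abelian subcategory $\C'$ of $\mod A$ containing $X$, I fix $M \in \C$. Using that $\soc M$ is essential in the finite length module $M$ and embeds into some $X^n$, injectivity of $X^n$ in $\C$ extends this to an embedding $M \hookrightarrow X^n$ in $\C$; the cokernel $C \in \C$ satisfies $\Ext^1_\C(N, C) = 0$ for all $N$ by the long exact sequence and hereditarity, so $C$ is injective in $\C$, hence a summand of some $X^m$. Both $X^n$ and $X^m$ lie in $\C'$, so $M$ is recovered as the kernel $\ker(X^n \to X^m)$ inside $\C'$, giving $\C \subseteq \C'$. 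The reverse inclusion is clear.

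Finally, for $(3) \Rightarrow (4)$, the Ext-orthogonal pair $(\X, \Y)$ generated by $\C$ has $\Y = \C^\perp = X^\perp$: the inclusion $\C^\perp \subseteq X^\perp$ is trivial, and for the reverse one uses that ${}^\perp M$ is an extension closed abelian subcategory of $\Mod A$, so its intersection with $\mod A$ is such a subcategory containing $X$, hence contains $\C$ by minimality. The pair is complete with $\Y$ closed under coproducts by Corollary~\ref{co:perp} and Theorem~\ref{th:ext}, so Proposition~\ref{pr:except}, implication $(1) \Rightarrow (3)$, applied to $(\X, \Y)$ with exceptional generator $X$, delivers precisely (4). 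The point I expect to require the most care is the coresolution argument in $(2) \Rightarrow (3)$, where one must leverage finite-dimensionality of $X$ to conclude that $\C$ has finitely many simples and that every injective object of $\C$ is a summand of some power of $X$.
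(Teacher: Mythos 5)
Your cyclic chain $(1)\Rightarrow(2)\Rightarrow(3)\Rightarrow(4)\Rightarrow(1)$ is the paper's own, and your first three implications are correct and follow the paper's line; in particular your $(2)\Rightarrow(3)$ fills in, correctly, the details the paper leaves terse (the right adjoint to the exact inclusion preserves injectives, so $X:=R(Q)$ is exceptional, and every $M\in\C$ embeds into a power of $X$ with injective cokernel, hence is a kernel of a map between powers of $X$).

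The gap is in $(4)\Rightarrow(1)$, precisely where the paper invokes Proposition~\ref{pr:ext}. Your computation — take $M\in({^\perp}\D)\cap\mod A$, look at $\e_M$, use that the intersection of the two sides of an Ext-orthogonal pair is $0$ — does show $Y_M=Y^M=X^M=0$ and hence $M\cong X_M$. But that only places $M$ in $\X\cap\mod A$; it does not put it in $\C$, and nothing in $(4)$ tells you that $\X\cap\mod A=\C$. Indeed $\X={^\perp}(\C^\perp)$, and a priori ${^\perp}(\C^\perp)\cap\mod A$ could strictly contain $\C$. This identification is exactly the content of Proposition~\ref{pr:ext}: since $\C$ is already an extension closed abelian subcategory of $\mod A$, it is the smallest such containing itself, hence equals ${^\perp}(\C^\perp)\cap\mod A$. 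That proposition (whose proof, showing $\varinjlim\C$ is extension closed abelian, is not trivial) is needed both to conclude ${^\perp}\D\cap\mod A=\C$ and to guarantee the $\X$-terms of $\e_M$ actually lie in $\C$ rather than merely in $\X\cap\mod A$; without it your $(4)\Rightarrow(1)$ is incomplete. (A smaller imprecision in the same step: what you actually need and use is $\X\cap\Y=0$, not $\C\cap\D=0$ — the terms to be killed are only known to lie in $\X\cap\Y$ until the equality $\X\cap\mod A=\C$ is established.)
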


\begin{proof}
(1) $\Rightarrow$ (2): For $M\in\mod A$ let $0\to D_M\to C_M\to M\to
D^M\to C^M\to 0$ be its 5-term exact sequence. Sending a module $M$ to $C_M$
induces a right adjoint for the inclusion $\C\to\mod A$; see
Lemma~\ref{le:exact}.

(2) $\Rightarrow$ (3): Choose an injective cogenerator $Q$ in $\mod A$
and let $X$ denote its image under the right adjoint of the inclusion
of $\C$.  A right adjoint of an exact functor preserves injectivity.
It follows that $X$ is an exceptional object and that $\C$ is the
smallest extension closed abelian subcategory of $\mod A$ containing
$X$.

(3) $\Rightarrow$ (4): See Proposition~\ref{pr:except}.

(4) $\Rightarrow$ (1): The property of the pair $(\X,\Y)$ implies that
$(\X\cap\mod A,\Y\cap\mod A)$ is a complete Ext-orthogonal pair for
$\mod A$. An application of Proposition~\ref{pr:ext} yields
the equality $\X\cap\mod A=\C$. Thus there exists a complete Ext-orthogonal pair
$(\C,\D)$ for $\mod A$.
\end{proof}

\begin{rem} 
There is a dual result which is obtained by applying the duality
between modules over the algebra $A$ and its opposite  $A^\op$. Note that condition (3)
is self-dual.
\end{rem}

\end{document}